\theoremstyle{plain}
\newtheorem{theorem}{Theorem}[section]
\newtheorem{proposition}[theorem]{Proposition}
\newtheorem*{theorem*}{Theorem}
\newtheorem*{theoremA}{Theorem A}
\newtheorem*{corollary*}{Corollary}
\newtheorem{corollary}[theorem]{Corollary}
\newtheorem{condition}[theorem]{Condition}
\newtheorem*{lemma*}{Lemma}
\newtheorem{lemma}[theorem]{Lemma}
\theoremstyle{remark}
\newtheorem*{case*}{Case}
\theoremstyle{definition}
\newtheorem{definition}{Definition}
\newtheorem{conjecture}{Conjecture}
\newcommand{\tw}[1]{{}^#1\!}
\newcommand{\legendre}[2]{\genfrac{(}{)}{}{}{#1}{#2}}
\newcommand{\Q}{\mathbb{Q}}
\newcommand{\K}{\mathbb{K}}
\newcommand{\aut}{\mathrm{Aut}}
\newcommand{\C}{\mathbb{C}}
\newcommand{\F}{\mathbb{F}}
\newcommand{\Hom}{\mathrm{Hom}}
\newcommand{\syl}{\mathrm{Syl}}
\newcommand{\gal}{\mathrm{Gal}}
\newcommand{\ind}{\mathrm{Ind}}
\newcommand{\irr}{\mathrm{Irr}}
\newcommand{\wh}[1]{\widehat{#1}}
\newcommand{\tr}{\mathrm{Tr}}
\newcommand{\Aut}{\mathrm{Aut}}
\newcommand{\End}{\mathrm{End}}
\newcommand{\la}{\lambda}
\newcommand{\fl}[1]{\mathfrak{F}(#1)}
\newcommand{\e}[1]{\End_{\F G}(\fl{#1})}
\newcommand{\wt}[1]{\widetilde{#1}}
\newcommand{\bG}[1]{\textbf{#1}}
\newcommand{\bg}[1]{\textbf{#1}}
\title{Galois Automorphisms on Harish-Chandra Series and Navarro's Self-Normalizing Sylow $2$-Subgroup Conjecture}
\author{A. A. Schaeffer Fry\\ \small\textit{Department of Mathematical and Computer Sciences}\\\small\textit{Metropolitan State University of Denver}\\ \small\textit{Denver, CO 80217, USA}\\ \small\textit{aschaef6@msudenver.edu}
}
\date{}
\begin{document}
\maketitle

\begin{abstract}
G. Navarro has conjectured a necessary and sufficient condition for a finite group $G$ to
have a self-normalizing Sylow 2-subgroup, which is given in terms of the ordinary irreducible
characters of $G$.   In a previous article, the author has reduced the proof of this conjecture to showing that
certain related statements hold for simple groups. In this article, we describe the action of Galois automorphisms on the Howlett--Lehrer parametrization of Harish-Chandra induced characters.  We use this to complete the proof of the conjecture by showing that the remaining simple groups satisfy the required conditions.  

\vspace{0.25cm}

\noindent \textit{Mathematics Classification Number:} 20C15, 20C33

\noindent \textit{Keywords:} local-global conjectures, characters, McKay conjecture, self-normalizing Sylow subgroups, finite simple groups, Lie type, Harish-Chandra series

\end{abstract}

\section{Introduction} 
Throughout, we write $\Q_n:=\Q(e^{2\pi i/n})$ to denote the extension field of $\Q$ obtained by adjoining $n$th roots of unity in $\C$.  In particular, if a finite group $G$ has size $n$, then $\Q_n$ is a splitting field for $G$ and the group $\mathrm{Gal}(\Q_n/\Q)$ acts on the set of irreducible ordinary characters, $\irr(G)$, of $G$.  

In \cite{navarro2004}, G. Navarro conjectured a refinement to the well-known McKay conjecture that incorporates this action of $\mathrm{Gal}(\Q_n/\Q)$.  Specifically, the ``Galois-McKay" conjecture posits that if $\ell$ is a prime, $\sigma\in\mathrm{Gal}(\Q_n/\Q)$ sends every $\ell'$ root of unity to some $\ell$th power, and $P\in\syl_\ell(G)$ is a Sylow $\ell$-subgroup of $G$, then the number of characters in $\irr_{\ell'}(G)$ that are fixed by $\sigma$ is the same as the number of characters in $\irr_{\ell'}(N_G(P))$ fixed by $\sigma$.  Here for a finite group $X$, we write $\irr_{\ell'}(X)=\{\chi\in\irr(X)\mid\ell\nmid\chi(1)\}$.

In the same paper, Navarro shows that the validity of his Galois-McKay conjecture would imply the following statement in the case $\ell=2$ (as well as a corresponding statement for $\ell$ odd):

\begin{conjecture}[Navarro]\label{conj:mainprob}
Let $G$ be a finite group and let $\sigma\in\gal(\Q_{|G|}/\Q)$ fixing $2$-roots of unity and squaring $2'$-roots of unity. Then $G$ has a self-normalizing Sylow $2$-subgroup if and only if every irreducible complex character of $G$ with odd degree is fixed by $\sigma$.
\end{conjecture}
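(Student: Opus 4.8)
The plan is to derive Conjecture~\ref{conj:mainprob} from the reduction theorem established in the author's earlier article, which shows that the conjecture holds for every finite group once a certain inductive condition---formulated in terms of the odd-degree irreducible characters of a finite simple group $S$, the action of $\Aut(S)$ on them, and the action of the relevant Galois automorphisms $\sigma$---has been verified for every $S$. That condition is already known for the alternating and sporadic groups, the Tits group, and various simple groups treated in earlier work, so the task here is to verify it for the remaining simple groups, which are of Lie type in odd characteristic. The argument then divides into a uniform analysis via Harish--Chandra theory, followed by the treatment of a bounded list of small and exceptional groups.

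For $S$ of Lie type in odd characteristic $p$, write $S = G/Z$ with $G = \bG{G}^F$, where $\bG{G}$ is simple and simply connected and $F$ is a Frobenius endomorphism. Using Clifford theory with respect to the diagonal, field, and graph automorphisms, we reduce the inductive condition for $S$ to statements about $\irr_{2'}(G)$ and the joint action of $\Aut(S)$ and $\sigma$ on it. The organizing fact is that every odd-degree $\chi \in \irr(G)$ lies in the Harish--Chandra series of some cuspidal pair $(\bG{L}^F, \lambda)$, corresponding under the Howlett--Lehrer parametrization to a character $\eta \in \irr\bigl(W_G(\bG{L},\lambda)\bigr)$ of the associated relative Weyl group; the Howlett--Lehrer degree formula then imposes an explicit constraint on $\lambda$, $\eta$, and $q$ that confines the possible cuspidal data to a short list. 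The technical heart of the paper is a description of the $\sigma$-action in this picture. Since Harish--Chandra induction is Galois-equivariant, $\sigma$ carries the series over $(\bG{L}^F,\lambda)$ to the series over $(\bG{L}^F,\tw{\sigma}\lambda)$, and we will show that the Howlett--Lehrer bijections can be chosen so that the resulting permutation of the labels $\eta$ is controlled---for instance, when $\lambda$ is $\sigma$-fixed, the field $\Q(\chi_\eta)$ is governed by $\Q(\lambda)$, $\Q(\eta)$, and the rational structure constants of the relevant Hecke algebra. This reduces the question of whether $\chi_\eta$ is $\sigma$-fixed to the combinatorics of $W_G(\bG{L},\lambda)$.

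With this description in hand, the verification proceeds as follows. First, invoke the known classification of the simple groups $S$ (and of the relevant extensions $S \rtimes A$ with $A \leq \Aut(S)$) that have a self-normalizing Sylow $2$-subgroup; for those, the Galois description above is used to confirm that every odd-degree character is $\sigma$-fixed. For all other $S$, we must exhibit an odd-degree character $\chi$ with $\tw{\sigma}\chi \neq \chi$---most naturally one lying over a non-$\sigma$-fixed linear character of a suitable torus or Levi subgroup---and check that it additionally has the $\Aut(S)$-invariance and extendibility properties demanded by the inductive condition. Groups of small rank, such as $\mathrm{PSL}_2$, $\mathrm{PSL}_3$, and $\mathrm{PSU}_3$, where the uniform analysis is least informative, are handled using explicit character-theoretic data.

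The main obstacle is the Galois description of the Howlett--Lehrer parametrization. That parametrization is canonical only up to a twist by a linear character of the relative Weyl group; the endomorphism algebra $\End_{\C G}(\lambda^G)$ can be a genuinely \emph{twisted} group algebra, so one is forced to keep track of projective representations and their Galois theory; and the comparison of the Howlett--Lehrer bijection with its $\sigma$-conjugate passes through the Schur elements, whose roots of unity must be shown not to enlarge the relevant fields of values. Producing a parametrization that is simultaneously compatible with $\sigma$ and with the graph and field automorphisms of $S$---which is exactly what the Clifford-theoretic bookkeeping requires---is where the real difficulty lies.
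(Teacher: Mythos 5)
Your proposal follows essentially the same route as the paper: invoke the reduction theorem from the author's earlier work (reducing to the ``SN2S-Good'' conditions for simple groups), and verify those conditions for the remaining simple groups of Lie type in odd characteristic by analyzing how $\sigma$ acts on the Howlett--Lehrer parametrization of Harish-Chandra series, then either confirming all odd-degree characters are $\sigma$-fixed (self-normalizing case) or exhibiting a suitably invariant odd-degree character moved by $\sigma$ (otherwise). The only notable discrepancy is that the small-rank and type $A$ groups you propose to handle by explicit character data were already settled in the prior articles, so the actual remaining cases are types $B$, $C$, $D$, $\tw{2}{D}$, $E_6$, $\tw{2}{E}_6$, and $E_8$.
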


The ordinary McKay conjecture has been reduced to proving certain inductive statements for simple
groups in \cite{IsaacsMalleNavarroMcKayreduction}, and even recently proven for $\ell=2$ by G. Malle and B. Sp{\"a}th in \cite{MalleSpathMcKay2}.  However, at the time of the writing of the current article, there is not yet such a reduction for the Galois-McKay conjecture.  In fact, very few groups have even been shown to satisfy the Galois-McKay conjecture, and a reduction seems even more elusive in the case $\ell=2$ than for odd primes.  We consider \prettyref{conj:mainprob} to be a weak form of the Galois-McKay refinement in this case, and hope that some of the  observations made in the course of its proof will be useful in future work on the full Galois-McKay conjecture.

In \cite{NavarroTiepTurull2007}, Navarro, Tiep, and Turull proved the corresponding statement for $\ell$ odd.  In \cite{SchaefferFrySN2S1}, the author proved a reduction theorem for \prettyref{conj:mainprob}, which reduced the problem to showing that slightly stronger inductive conditions hold for all finite nonabelian simple groups (groups satisfying these statements are called ``SN2S-Good"; see also \prettyref{sec:SimpleStatements} below for the statements), and proved that the sporadic, alternating, and several simple groups of Lie type satisfy these conditions.  In \cite{SFTaylorTypeA}, the author, together with J. Taylor, extended the strategy from \cite{SchaefferFrySN2S1} to show that the conditions hold for every simple group of Lie type in characteristic $2$, as well as for the simple groups $PSL_n(q)$ and $PSU_n(q)$ for odd $q$.  

The main result of this article is the completion of the proof of \prettyref{conj:mainprob}, which is achieved by showing that the remaining simple groups of Lie type satisfy the conditions to be SN2S-Good.  That is, we prove the following:

\begin{theoremA}\label{thm:mainthm}
\prettyref{conj:mainprob} holds for every finite group.
\end{theoremA}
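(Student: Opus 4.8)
The plan is to invoke the reduction theorem of \cite{SchaefferFrySN2S1}, which reduces \prettyref{conj:mainprob} for arbitrary finite groups to the statement that every finite nonabelian simple group is SN2S-Good. Combined with the cases already treated in \cite{SchaefferFrySN2S1} (the sporadic groups, the alternating groups, and some groups of Lie type) and in \cite{SFTaylorTypeA} (all groups of Lie type in defining characteristic $2$, together with $PSL_n(q)$ and $PSU_n(q)$ for $q$ odd), what remains is to show that the simple groups of Lie type in odd defining characteristic not of type $A$ -- the groups of types $B$, $C$, $D$, $\tw{2}D$, $\tw{3}D_4$, $G_2$, $\tw{2}G_2$, $F_4$, $E_6$, $\tw{2}E_6$, $E_7$, $E_8$ over fields of odd order -- are SN2S-Good.

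Fix such an $S$ and a quasi-simple group $G$ with $G/Z(G)\cong S$, taking $G=\mathbf{G}^F$ for $\mathbf{G}$ simple and simply connected with Frobenius endomorphism $F$ (the finitely many exceptional-Schur-multiplier cases being handled directly), and fix $P\in\Syl_2(G)$ and $\sigma$ as in \prettyref{conj:mainprob}. The first step is to determine the odd-degree characters of $G$ and of $N_G(P)$. Since $2$ is not the defining prime, Jordan decomposition reduces this to the data of a semisimple element $s$ of the dual group $\mathbf{G}^{*F}$ whose centralizer has odd index, together with a unipotent character of $C_{\mathbf{G}^*}(s)^F$ of odd degree; the latter is then analysed through Harish-Chandra theory, where by the Howlett--Lehrer parametrization the unipotent constituents of $R_{\mathbf{L}}^{\mathbf{C}}(\lambda)$, with $\lambda$ cuspidal unipotent and $\mathbf{C}=C_{\mathbf{G}^*}(s)$, are labelled by $\Irr$ of a relative Weyl group (possibly with a $2$-cocycle twist), the odd-degree ones corresponding to those labels carrying the full $2$-part of the relevant index. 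A counting argument, together with an explicit description of $N_G(P)$, then identifies all odd-degree characters on both sides.

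The second step applies the main technical result of this paper, describing the action of $\gal(\Q_{|G|}/\Q)$ on the Howlett--Lehrer parametrization: $\sigma$ carries the constituent of $R_{\mathbf{L}}^{\mathbf{C}}(\lambda)$ labelled by $\eta$ to the constituent of $R_{\mathbf{L}}^{\mathbf{C}}(\tw{\sigma}\lambda)$ labelled by an explicitly determined character of the relative Weyl group obtained from $\eta$ via the $2$-cocycle and the action of $\sigma$ on the underlying Hecke algebra. Together with the known behaviour of $\sigma$ on Lusztig series and on semisimple classes, this allows one to check that every odd-degree character of $G$, hence of $S$, is fixed by $\sigma$ whenever the Sylow $2$-subgroup of $S$ is self-normalizing, while in the remaining cases the structural analysis of the first step exhibits an odd-degree character not fixed by $\sigma$, confirming the required character-theoretic statement for $S$ in both directions.

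The main obstacle -- and the reason the new Galois results are needed in exactly the form proved here -- is the full strength of the inductive SN2S-Good conditions, which demand not merely $\sigma$-invariance but simultaneous control of the action of $\Aut(S)$, in particular of diagonal and field automorphisms, on the odd-degree characters and on $N_G(P)/P$, as well as an $\langle\Aut(S),\sigma\rangle$-equivariant bijection between the odd-degree characters of $G$ and those of $N_G(P)$. The Howlett--Lehrer labelling is canonical only after fixing an extension of $\lambda$ to its stabiliser and a choice of cocycle, and field automorphisms move $\lambda$ in general, so the crux is to make these choices in a way that renders the actions of $\sigma$, of the diagonal automorphisms, and of the field automorphisms all simultaneously and compatibly describable. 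Once this is achieved for each of the remaining types, SN2S-Goodness follows for every finite nonabelian simple group, and Theorem A is proved.
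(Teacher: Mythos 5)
Your overall skeleton matches the paper's: invoke the reduction theorem of the author's earlier work (\prettyref{thm:NTT62anal}) and then prove SN2S-Goodness for the outstanding simple groups using the new description of the Galois action on the Howlett--Lehrer parametrization. (In fact only $E_6^{\pm}(q)$, $E_8(q)$, $P\Omega_n^{\pm}(q)$ for $n\geq 7$, and $PSp_{2n}(q)$ for $n\geq 2$, with $q$ odd, remain open; the other odd-characteristic types you list were already handled in the earlier papers.) But two points in your plan are genuine gaps rather than differences of route. First, you misstate what SN2S-Goodness demands: Conditions \ref{cond:conjIF} and \ref{cond:conjFI} only require that, under the relevant hypotheses on a $2$-group $Q$ of automorphisms, every $Q$-invariant odd-degree character be $\sigma$-fixed (or, in the converse direction, that some odd-degree character fail to be $\sigma$-fixed). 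No $\langle\Aut(S),\sigma\rangle$-equivariant bijection between $\irr_{2'}(G)$ and $\irr_{2'}(N_G(P))$ is required, and the paper never parametrizes or counts odd-degree characters of $N_G(P)$; the local side enters only through knowing when the Sylow $2$-subgroup is self-normalizing and, for $PSp_{2n}(q)$ and $E_6^{\epsilon}(q)$, through the structure of $N_G(P)/P$ as a $Q$-set. Producing an equivariant bijection would be an inductive-Galois-McKay-type task that neither the reduction needs nor your outlined methods supply.

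Second, and more seriously, your mechanism for establishing $\sigma$-fixedness runs through Jordan decomposition: you classify odd-degree characters by pairs consisting of a semisimple $s\in G^{\ast}$ and an odd-degree unipotent character of $C_{\mathbf{G}^{\ast}}(s)^F$, and then propose to apply the Galois-action theorem to the Harish-Chandra series $R_{\mathbf{L}}^{\mathbf{C}}(\lambda)$ inside the centralizer $\mathbf{C}=C_{\mathbf{G}^{\ast}}(s)$ with $\lambda$ cuspidal unipotent. To translate the resulting statement about dual-side labels back into the action of $\sigma$ on $\irr(G)$ you would need Jordan decomposition to be $\sigma$-equivariant, which is not known (particularly with disconnected centralizers) and is precisely the kind of compatibility the paper is structured to avoid. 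The paper instead uses \prettyref{lem:MS7.9}: for $\bG{G}$ simply connected and not of type $A_n$, every odd-degree character is $R_T^G(\lambda)_\gamma$ for a linear character $\lambda$ of a maximally split torus (with one exceptional Harish-Chandra series in type $C_n$, $q\equiv 3\pmod 4$), and then applies \prettyref{thm:GaloisAct} directly in $G$, where $\lambda^\sigma$ is computable, $\delta_{\lambda,\sigma}$ is made trivial by choosing a $\sigma$-invariant extension (\prettyref{prop:extn}), $\gamma^{(\sigma)}=\gamma$ follows from rationality of the relevant Hecke-algebra characters (\prettyref{cor:newsigmaaction}), and the sign $r_\sigma$ is controlled by $q\bmod 8$ together with the parity of lengths of elements of $C(\lambda)$ (\prettyref{lem:rw}, \prettyref{prop:Clambdaeven}); the non-self-normalizing cases are settled by exhibiting explicit non-$\sigma$-fixed odd-degree characters. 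Unless you replace the Jordan-decomposition transfer with an argument of this kind (or prove that $\sigma$ commutes with Jordan decomposition), the second step of your proposal does not go through.
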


Theorem A has the following interesting consequence:
\begin{corollary*}
One can determine from the character table of a finite group $G$ whether a Sylow $2$-subgroup of $G$ is self-normalizing.
\end{corollary*}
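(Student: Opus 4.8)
\emph{Proof proposal.} The plan is to obtain the corollary as a formal consequence of Theorem~A, the real content being that every ingredient on the right-hand side of \prettyref{conj:mainprob} can be read off the character table. Write $n=|G|$. First I would recover $n$ from the table via $n=\sum_{\chi\in\irr(G)}\chi(1)^2$ and read the degrees $\chi(1)$ off the column indexed by the identity, which identifies the set $\irr_{2'}(G)$ of odd-degree irreducible characters. Next, knowing $n$, I would pin down the relevant Galois automorphism: writing $n=2^a m$ with $m$ odd, the automorphism $\sigma\in\gal(\Q_n/\Q)$ that fixes $2$-roots of unity and squares $2'$-roots of unity is the unique element corresponding, under $\gal(\Q_n/\Q)\cong(\Z/n\Z)^\times$, to the residue $k$ with $k\equiv 1\pmod{2^a}$ and $k\equiv 2\pmod m$ (which is automatically coprime to $n$). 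By Theorem~A, $G$ has a self-normalizing Sylow $2$-subgroup if and only if $\chi^\sigma=\chi$ for every $\chi\in\irr_{2'}(G)$, so everything reduces to deciding this last condition from the table.

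For that, I would use that every entry of the character table is an algebraic number lying in $\Q_n$, and that the row of the table corresponding to $\chi^\sigma$ is obtained simply by applying the field automorphism $\sigma$ to each entry of the row of $\chi$. Since $\sigma$ acts in a completely determined way on the explicit algebraic numbers occurring as character values, once $n$---and hence $k$---is known, this row is computable from the table alone, and one then checks whether it coincides with the row of $\chi$. Running over all $\chi\in\irr_{2'}(G)$ and invoking Theorem~A then decides whether a Sylow $2$-subgroup of $G$ is self-normalizing; equivalently, ``all odd-degree irreducible characters are $\sigma$-fixed'' is an invariant of the character table, so the equivalence of Theorem~A transports to any group sharing the same table.

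The step I expect to require the most care is exactly this claim that the permutation of $\irr(G)$ induced by $\sigma$ is visible in the character table. It is mildly delicate because the character table does \emph{not} in general determine the power maps $g\mapsto g^j$ on conjugacy classes: for instance $Q_8$ and $D_8$ have the same character table but their squaring maps differ. Consequently the identity $\chi^\sigma(g)=\chi(g^k)$, though valid, is not directly usable, since it refers to the $k$-th power map. What \emph{is} available, and what the argument should use, is that $\sigma$ is applied as a field automorphism of $\Q_n$ to the specific algebraic numbers appearing in the table; this operation---and hence the induced action of $\sigma$ on the rows, and the property of being $\sigma$-fixed---is determined by the table together with $n$. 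I would take care to phrase this distinction cleanly.
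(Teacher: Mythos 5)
Your proposal is correct and is the evident intended argument: the paper states the corollary without proof as an immediate consequence of Theorem A, and the content is exactly what you supply — the table determines $|G|$, hence $\sigma$, hence (since $\chi^\sigma$ is by definition obtained by applying $\sigma$ to the values of $\chi$) whether every odd-degree character is $\sigma$-fixed. Your remark distinguishing the entrywise field-automorphism action from the (table-undetermined) power maps is a correct and worthwhile clarification, but the approach is the same as the paper's.
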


We remark that several relevant results regarding the Galois automorphism $\sigma$ involved in \prettyref{conj:mainprob}, including a simplified version of the reduction of \prettyref{conj:mainprob}, have been obtained by G. Navarro and C. Vallejo in \cite{navarro-vallejo:2017:2-local-blocks-with-one-simple-module}.  The latter appeared just before the submission of the current article, which is completely independent of the work in \cite{navarro-vallejo:2017:2-local-blocks-with-one-simple-module}.  In particular, the conditions to be proved in the case of simple groups of Lie type of type $B, C, D,$ and $\tw{2}{D}$, which are the primary focus here, remain nearly the same under Navarro and Vallejo's version of the reduction.  Specifically, our results here imply that simple groups of Lie type $B$, $C$, $D$, and $\tw{2}{D}$ satisfy \cite[Conjecture A]{navarro-vallejo:2017:2-local-blocks-with-one-simple-module}.  Further, the strategy employed here in these cases, and in particular our analysis of the action of the Galois group on the Howlett--Lehrer parametrization of characters of groups of Lie type (see \prettyref{sec:gallact} below) may be of significant independent interest, since the effect of group actions on such parametrizations is an especially problematic component in a number of main problems regarding the representations of groups of Lie type.

We begin in \prettyref{sec:gal twist} by studying the actions of field automorphisms on modules in a very general setting.  In \prettyref{sec:gallact}, we make use of this framework to describe the action of $\mathrm{Gal}(\Q_n/\Q)$ on the Harish-Chandra parameterization of characters of groups with a $BN$-pair, which builds upon the strategy employed by G. Malle and B. Sp{\"a}th in \cite{MalleSpathMcKay2} for the case of the action of $\aut(G)$.  (See \prettyref{thm:GaloisAct}.) We hope that this will be useful beyond the scope of this article, especially in the context of proving the Galois-McKay conjecture (or the inductive conditions in an eventual reduction to simple groups) for groups of Lie type.  In \prettyref{sec:SN2S}, we prove Theorem A by showing that the remaining simple groups of Lie type defined in odd characteristic are SN2S-Good.  This is done in \prettyref{cor:princseries1mod8}, and Theorems \ref{thm:typeC}, \ref{thm:typeBD}, and \ref{thm:typeE6}.

\section{Galois Twists }\label{sec:gal twist} 

Here we consider a general framework for studying the action of field automorphisms on modules.  Let $\F$ be a field. Throughout, an $\F$-algebra $A$ will be taken to mean a finite-dimensional associative unital $\F$-algebra, and an $A$-module will be taken to mean a finite-dimensional left $A$-module.  

\subsection{$\sigma$-Twists}

Let $\sigma\in\aut(\F)$ be an automorphism of $\F$.  For $\alpha\in\F$, we write $\alpha^\sigma$ for the image of $\alpha$ under $\sigma$.  Given a vector space $V$ over $\F$, we define the \emph{$\sigma$-twist}, $V^\sigma$, of $V$ to be the $\F$-vector space whose underlying abelian group is the same as $V$, but with scalar multiplication defined by
\[\alpha\star_\sigma v:=\alpha^{\sigma^{-1}} v\] for $\alpha\in\F$ and $v\in V$.  Here the product on the right-hand side is just the ordinary scalar multiplication on $V$.  For any $v\in V$, we denote by $v^\sigma$ the element $v$, but viewed in $V^\sigma$.  Hence, the scalar multiplication may alternatively be written:
\[\alpha v^\sigma:=\alpha^{\sigma^{-1}} v.\]

With this in place, we may analogously define the $\sigma$-twist of an $\F$-algebra $A$ to be the $\F$-algebra $A^\sigma$ whose underlying ring structure is the same as that of $A$, but with $\sigma$-twisted vector space structure defined as above.  For $a\in A$, we will denote by $a^\sigma$ the element $a$ but viewed in $A^\sigma$.  Further, for an $A$-module $M$, we may define the $\sigma$-twist of $M$ to be the $A^\sigma$-module $M^\sigma$ whose underlying vector space structure is the $\sigma$-twisted vector space structure as above, and for $a\in A, m\in M$, we have \[a^\sigma m^\sigma=(am)^\sigma.\]

Given two vector spaces $V$ and $W$ over $\F$, we define a \emph{$\sigma$-semilinear} map to be a homomorphism of abelian groups $\phi\colon V\rightarrow W$ satisfying $\phi(\alpha v)=\alpha^\sigma\phi(v)$ for all $\alpha\in\F$ and $v\in V$.

Note then that for a vector space $V$ over $\F$ or $\F$-algebra $A$ the maps 
\[-^\sigma\colon V\rightarrow V^\sigma \quad\hbox{ and }\quad -^\sigma\colon A\rightarrow A^\sigma\] defined by $v\mapsto v^\sigma$ and $a\mapsto a^\sigma$ are $\sigma$-semilinear isomorphisms of abelian groups and rings, respectively.  

\subsection{$\sigma$-Twists and Representations}

Let $\irr(A)$ denote the set of isomorphism classes of simple $A$-modules.   Then we see that the map $-^\sigma\colon \irr(A)\rightarrow \irr(A^\sigma)$, given by $M\mapsto M^\sigma$, is naturally a bijection.  

Let $M$ be an $A$-module and let $\rho\colon A\rightarrow \End_A(M)$ be the corresponding representation.  That is, for $a\in A$, $\rho(a)$ is the endomorphism of $M$ given by $\rho(a)m:=am$ for $m\in M$.  Fixing a basis $\mathfrak{B}$ for $M$, we write $[\rho(a)]_\mathfrak{B}\in \mathrm{Mat}_{\dim M}(\F)$ for the image of $a\in A$ under the resulting matrix representation.  The character of $A$ afforded by $M$ is the map $\eta\colon A\rightarrow\F$ defined by the trace $\eta(a):=\tr\left([\rho(a)]_\mathfrak{B}\right).$

Similarly, we have a representation $\rho^\sigma\colon A^\sigma\rightarrow \End_{A^\sigma}(M^\sigma)$ which affords the $A^\sigma$-module $M^\sigma$.  Here $\rho^\sigma(a^\sigma)m^\sigma:=a^\sigma m^\sigma=(am)^\sigma$ for $m\in M$, $a\in A$.  Also, note that if $\mathfrak{B}$ is a basis for $M$, then the set $\mathfrak{B}^\sigma\subseteq M^\sigma$ is a basis for $M^\sigma$.   If $\eta$ is the character of $A$ afforded by $M$, we will denote by $\eta^\sigma$ the character of $A^\sigma$ afforded by $M^\sigma$.

 Given a  matrix $X$ over $\F$, we denote by $X^\sigma$ the matrix obtained by applying $\sigma$ to each entry of $X$.  That is, if $X=(x_{ij})\in\mathrm{Mat}_d(\F)$ is a $d\times d$ matrix, the matrix $X^\sigma$ is $(x_{ij}^\sigma)\in\mathrm{Mat}_d(\F)$.  The following lemma tells us that the matrix corresponding to $\rho^\sigma(a^\sigma)$ is simply the matrix obtained in this way from $\rho(a)$.
 
 \begin{lemma}\label{lem:sigmamatrixrep}
 Let $M$ be an $A$-module affording the character $\eta$ of $A$ and let $\mathfrak{B}$ be a basis for $M$.    Given $a\in A$, we have
 \[\left[\rho^\sigma(a^\sigma)\right]_{\mathfrak{B}^\sigma}=\left[\rho(a)\right]_{\mathfrak{B}}^\sigma \in \mathrm{Mat}_{\dim M}(\F)\quad\hbox{ and }\quad \eta^\sigma(a^\sigma)=\eta(a)^\sigma.\]
 
 \end{lemma}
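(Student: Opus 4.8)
The plan is to unwind the definitions of the $\sigma$-twisted structures and verify the matrix identity entrywise; the trace identity will then follow immediately. Fix a basis $\mathfrak{B} = \{v_1, \dots, v_d\}$ for $M$, so that $\mathfrak{B}^\sigma = \{v_1^\sigma, \dots, v_d^\sigma\}$ is the corresponding basis for $M^\sigma$. Let $a \in A$ and write $[\rho(a)]_{\mathfrak{B}} = (x_{ij})$, which by definition means $a v_j = \rho(a) v_j = \sum_i x_{ij} v_i$ in $M$.

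First I would compute $\rho^\sigma(a^\sigma) v_j^\sigma = (a v_j)^\sigma$, using the defining relation $a^\sigma m^\sigma = (am)^\sigma$ of the $A^\sigma$-module $M^\sigma$. Now $(a v_j)^\sigma = \left(\sum_i x_{ij} v_i\right)^\sigma$, and here is the one place where care is needed: the map $-^\sigma \colon M \to M^\sigma$ is only $\sigma$-\emph{semilinear}, not $\F$-linear, so $\left(\sum_i x_{ij} v_i\right)^\sigma = \sum_i x_{ij}^\sigma v_i^\sigma$ rather than $\sum_i x_{ij} v_i^\sigma$. (Concretely: in $M^\sigma$ the scalar action is $\alpha v^\sigma = \alpha^{\sigma^{-1}} v$, so $x_{ij}^\sigma v_i^\sigma = (x_{ij}^\sigma)^{\sigma^{-1}} v_i = x_{ij} v_i$, matching the untwisted sum in the common underlying abelian group.) Hence $\rho^\sigma(a^\sigma) v_j^\sigma = \sum_i x_{ij}^\sigma v_i^\sigma$, which says precisely that the $(i,j)$ entry of $[\rho^\sigma(a^\sigma)]_{\mathfrak{B}^\sigma}$ is $x_{ij}^\sigma$; that is, $[\rho^\sigma(a^\sigma)]_{\mathfrak{B}^\sigma} = (x_{ij}^\sigma) = [\rho(a)]_{\mathfrak{B}}^\sigma$, as claimed.

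For the character identity, I would simply take traces: $\eta^\sigma(a^\sigma) = \tr\left([\rho^\sigma(a^\sigma)]_{\mathfrak{B}^\sigma}\right) = \tr\left([\rho(a)]_{\mathfrak{B}}^\sigma\right) = \sum_i x_{ii}^\sigma = \left(\sum_i x_{ii}\right)^\sigma = \tr\left([\rho(a)]_{\mathfrak{B}}\right)^\sigma = \eta(a)^\sigma$, using that $\sigma$ is a ring homomorphism on $\F$ to pull it out of the finite sum of diagonal entries.

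There is really no serious obstacle here — the statement is essentially a bookkeeping exercise. The only subtle point, and the one I would make sure to state explicitly, is the appearance of $\sigma$ (rather than $\sigma^{-1}$ or nothing) on the matrix entries, which is forced by the semilinearity of $-^\sigma$ together with the convention $\alpha \star_\sigma v = \alpha^{\sigma^{-1}} v$ defining the twisted scalar action; getting this consistent with the earlier-fixed conventions is the whole content of the lemma.
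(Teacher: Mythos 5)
Your proposal is correct and follows essentially the same argument as the paper: expand $a\mathfrak{b}$ in the basis, apply the $\sigma$-semilinear map $-^\sigma\colon M\to M^\sigma$ to see that the matrix entries acquire a $\sigma$, and take traces for the character identity. Your explicit check that semilinearity is consistent with the convention $\alpha v^\sigma=\alpha^{\sigma^{-1}}v$ is a fine addition but not a departure from the paper's proof.
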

 \begin{proof}
 Let $a\in A$ and let $\mathfrak{b}\in\mathfrak{B}$ be an element of the basis.  Then we may write
 \[a\mathfrak{b}=\sum_{\mathfrak{c}\in\mathfrak{B}} \alpha_{\mathfrak{b}, \mathfrak{c}}\mathfrak{c}\] for some scalars $\alpha_{\mathfrak{b},\mathfrak{c}}\in\F$.  Then 
 \[a^\sigma\mathfrak{b}^\sigma=(a\mathfrak{b})^\sigma=\left(\sum_{\mathfrak{c}\in\mathfrak{B}} \alpha_{\mathfrak{b}, \mathfrak{c}}\mathfrak{c}\right)^\sigma=\sum_{\mathfrak{c}\in\mathfrak{B}} \alpha_{\mathfrak{b}, \mathfrak{c}}^\sigma\mathfrak{c}^\sigma,\] where the last equality follows from the $\sigma$-semilinearity of $-^\sigma\colon M\rightarrow M^\sigma$.  Hence $\left[\rho^\sigma(a^\sigma)\right]_{\mathfrak{B}^\sigma}=(\alpha_{\mathfrak{b},\mathfrak{c}}^\sigma)=\left[\rho(a)\right]_{\mathfrak{B}}^\sigma$, which proves the statement.
 \end{proof}
 
 \subsection{$\sigma$-Twists and Homomorphisms}
 
 Let $C\subseteq A$ be a subalgebra and let $M$ and $N$ be $A$-modules.  We denote by $\Hom_C(M,N)$ the vector space of all $\F$-linear maps $f\colon M\rightarrow N$ satisfying $f(cm)=cf(m)$ for every $c\in C$ and $m\in M$.  In particular, $\Hom_A(M,N)$ is the set of all $A$-module homomorphisms.  The following lemma and corollary, though straightforward, will be useful throughout the next several sections.
 
 \begin{lemma}\label{lem:sigmahoms}
Given $A$-modules $M$ and $N$ and a subalgebra $C$ of $A$, the map
\[\phi\colon\Hom_C(M, N)^\sigma\rightarrow\Hom_{C^\sigma}(M^\sigma, N^\sigma),\] defined by $\phi(f^\sigma)(m^\sigma):=f(m)^\sigma$ for all $f\in \Hom_C(M,N)$ and $m\in M$, is an isomorphism of vector spaces over $\F$.
 \end{lemma}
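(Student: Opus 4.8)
The plan is to verify by hand that $\phi$ is a well-defined $\F$-linear bijection; every step reduces to routine manipulation with the $\sigma$- and $\sigma^{-1}$-semilinearity of the twist maps $-^\sigma$, so there is no substantive obstacle, only bookkeeping.

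First I would check that $\phi$ is well-defined, i.e.\ that for $f\in\Hom_C(M,N)$ the prescription $\phi(f^\sigma)(m^\sigma):=f(m)^\sigma$ really determines an element of $\Hom_{C^\sigma}(M^\sigma,N^\sigma)$. Since $-^\sigma\colon M\to M^\sigma$ is a bijection, this does define a function $M^\sigma\to N^\sigma$, and it is additive because $f$ and the twist maps are. For $\F$-linearity, recall that the scalar action on the twist is $\alpha m^\sigma=(\alpha^{\sigma^{-1}}m)^\sigma$; hence $\phi(f^\sigma)(\alpha m^\sigma)=f(\alpha^{\sigma^{-1}}m)^\sigma=(\alpha^{\sigma^{-1}}f(m))^\sigma=\alpha\,f(m)^\sigma$, using ordinary $\F$-linearity of $f$. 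For $C^\sigma$-equivariance, given $c\in C$ we get $\phi(f^\sigma)(c^\sigma m^\sigma)=\phi(f^\sigma)((cm)^\sigma)=f(cm)^\sigma=(cf(m))^\sigma=c^\sigma\,\phi(f^\sigma)(m^\sigma)$, using $f\in\Hom_C(M,N)$ and the definition of the $C^\sigma$-action on $N^\sigma$.

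Next I would check that $\phi$ itself is $\F$-linear out of the twist $\Hom_C(M,N)^\sigma$. Additivity is clear; for $\alpha\in\F$ we have $\alpha f^\sigma=(\alpha^{\sigma^{-1}}f)^\sigma$ in $\Hom_C(M,N)^\sigma$, and evaluating at an arbitrary $m^\sigma$ gives $\phi(\alpha f^\sigma)(m^\sigma)=((\alpha^{\sigma^{-1}}f)(m))^\sigma=(\alpha^{\sigma^{-1}}f(m))^\sigma=\alpha\,f(m)^\sigma=(\alpha\,\phi(f^\sigma))(m^\sigma)$, so that $\phi(\alpha f^\sigma)=\alpha\,\phi(f^\sigma)$.

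Finally, for bijectivity: $\phi$ is injective because $\phi(f^\sigma)=0$ forces $f(m)^\sigma=0$, hence $f(m)=0$, for every $m$. For surjectivity I would run the identical construction with $\sigma$ replaced by $\sigma^{-1}$ and with $(A,M,N,C)$ replaced by $(A^\sigma,M^\sigma,N^\sigma,C^\sigma)$, noting $(A^\sigma)^{\sigma^{-1}}=A$, $(M^\sigma)^{\sigma^{-1}}=M$, and so on; this produces an $\F$-linear map in the opposite direction which one checks on elements to be a two-sided inverse of $\phi$. (Concretely: given $g\in\Hom_{C^\sigma}(M^\sigma,N^\sigma)$, let $f\colon M\to N$ be the function with $f(m)^\sigma=g(m^\sigma)$; reversing the computations above shows $f\in\Hom_C(M,N)$ and $\phi(f^\sigma)=g$.) The only point requiring care throughout is to track consistently which maps are $\F$-linear and which are $\sigma$- or $\sigma^{-1}$-semilinear, and to remember that the scalar action on a $\sigma$-twist is defined via $\sigma^{-1}$, so that the exponents in the linearity checks come out correctly.
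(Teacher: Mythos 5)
Your proof is correct: each step (well-definedness, $\F$-linearity of $\phi(f^\sigma)$ and of $\phi$ itself, $C^\sigma$-equivariance, and the inverse obtained by running the construction with $\sigma^{-1}$) checks out, with the semilinearity bookkeeping handled properly. The paper itself states this lemma without proof as a straightforward verification, and your argument is exactly the routine check it has in mind.
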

 
 \begin{corollary}\label{cor:sigmaend}
Given an $A$-module $M$, the map
\[\phi\colon\End_A(M)^\sigma\rightarrow\End_{A^\sigma}(M^\sigma),\] defined by $\phi(f^\sigma)(m^\sigma):=f(m)^\sigma$ for all $f\in \End_A(M)$ and $m\in M$, is an isomorphism of $\F$-algebras.
 \end{corollary}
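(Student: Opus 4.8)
The plan is to deduce this directly from Lemma \ref{lem:sigmahoms} by specializing its statement and then checking compatibility with the ring structure. First I would apply Lemma \ref{lem:sigmahoms} with $C=A$ and $N=M$; since the formula defining $\phi$ in the corollary is precisely the one appearing in the lemma in this special case, this immediately yields that $\phi\colon\End_A(M)^\sigma\to\End_{A^\sigma}(M^\sigma)$ is a well-defined isomorphism of $\F$-vector spaces. In particular, for each $f\in\End_A(M)$ the map $\phi(f^\sigma)$ is genuinely an $A^\sigma$-module endomorphism of $M^\sigma$, so nothing further needs to be said about well-definedness or bijectivity; it only remains to see that $\phi$ respects composition and the identity.

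The key observation for this is that forming a $\sigma$-twist alters only the $\F$-vector-space structure, not the underlying ring structure: by the definition of the twist of an $\F$-algebra, the product on $\End_A(M)^\sigma$ is literally composition of endomorphisms of $M$, and likewise the product on $\End_{A^\sigma}(M^\sigma)$ is composition of endomorphisms of $M^\sigma$. So I would verify, for $f,g\in\End_A(M)$ and $m\in M$, using only the defining formula $\phi(h^\sigma)(m^\sigma)=h(m)^\sigma$, that
\[\phi\bigl((f\circ g)^\sigma\bigr)(m^\sigma)=(f\circ g)(m)^\sigma=f\bigl(g(m)\bigr)^\sigma=\phi(f^\sigma)\bigl(g(m)^\sigma\bigr)=\phi(f^\sigma)\bigl(\phi(g^\sigma)(m^\sigma)\bigr).\]
Since every element of $M^\sigma$ has the form $m^\sigma$ for some $m\in M$, this shows $\phi\bigl((f\circ g)^\sigma\bigr)=\phi(f^\sigma)\circ\phi(g^\sigma)$. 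Unitality is equally immediate: $\phi(\mathrm{id}_M^\sigma)(m^\sigma)=\mathrm{id}_M(m)^\sigma=m^\sigma$, so $\phi$ carries the identity of $\End_A(M)^\sigma$ to $\mathrm{id}_{M^\sigma}$. Combining these, $\phi$ is an $\F$-linear bijection that is multiplicative and unital, hence an isomorphism of $\F$-algebras.

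I do not expect a genuine obstacle here: the corollary is essentially the $N=M$, $C=A$ instance of Lemma \ref{lem:sigmahoms} together with one short computation. The only point requiring a moment's care is the bookkeeping of which structures get twisted and which do not — namely recognizing that composition of endomorphisms is unaffected by the twist — after which the multiplicativity check reduces to the one-line calculation above.
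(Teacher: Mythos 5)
Your proof is correct and matches the paper's (implicit) intent: the paper states this as an immediate corollary of Lemma \ref{lem:sigmahoms} with $C=A$, $N=M$, and your verification that $\phi$ respects composition and the identity—using that the $\sigma$-twist leaves the ring structure untouched—is exactly the routine check being left to the reader.
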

 
 We remark that given a representation $\rho\colon A\rightarrow\End_A(M)$ and $a\in A$, the endomorphisms $\rho^\sigma(a^\sigma)$ and $\phi(\rho(a)^\sigma)$ agree, where $\phi$ is the isomorphism in \prettyref{cor:sigmaend}.
 
 \subsection{$\sigma$-Twists and Group Algebras}
 
 We now turn our attention to the case that $A=\F G$ is a group algebra over $\F$ for a finite group $G$.   Recall that members of $\F G$ are formal sums $\sum_{g\in G} \alpha_g g$, where $\alpha_g\in \F$.   
 
 
 We obtain a $\sigma^{-1}$-semilinear ring isomorphism $\iota_0\colon \F G\rightarrow \F G$ defined by 
 \[\iota_0\left(\sum_{g\in G} \alpha_g g\right):=\sum_{g\in G}\alpha_g^{\sigma^{-1}}g.\]  Composing with the $\sigma$-semilinear ring isomorphism $-^\sigma\colon \F G\rightarrow (\F G)^\sigma$ yields an isomorphism of $\F$-algebras $\iota_0^\sigma\colon \F G\rightarrow (\F G)^\sigma$ defined by $\iota_0^\sigma(a):=\iota_0(a)^\sigma$ for $a\in \F G$.
 
 If $M$ is an $\F G$-module, we may use this isomorphism to view the $(\F G)^\sigma$-module $M^\sigma$ as an $\F G$-module via  
 \[a\cdot m^\sigma:=\iota_0^\sigma(a)m^\sigma=\iota_0(a)^\sigma m^\sigma=(\iota_0(a)m)^\sigma\] for $a\in \F G$ and $m\in M$.
 
 By an abuse of notation, if $\rho\colon \F G\rightarrow \End_{\F G}(M)$ is the representation afforded by $M$, we will denote by $\rho^\sigma$ the representation of $\F G$ afforded by $M^\sigma$.  We remark that this can be viewed as $\iota_0^\sigma$ composed with the $(\F G)^\sigma$-representation $\rho^\sigma$.  
 
 Let $\F_0\subseteq\F$ denote the prime subfield of $\F$, and note that $\alpha^\sigma=\alpha$ for every $\alpha\in\F_0$.   In this context, we may rephrase \prettyref{lem:sigmamatrixrep} as follows:
 
 \begin{lemma}\label{lem:sigmamatrixrep2}
 Let $M$ be an $\F G$-module affording the character $\chi$ of $\F G$ and let $\mathfrak{B}$ be a basis for $M$.    Given $a\in \F G$, we have
 \[\left[\rho^\sigma(a)\right]_{\mathfrak{B}^\sigma}=\left[\rho(\iota_0(a))\right]_{\mathfrak{B}}^\sigma \in \mathrm{Mat}_{\dim M}(\F)\quad\hbox{ and }\quad \chi^\sigma(a)=\chi(\iota_0(a))^\sigma.\]  In particular, $\chi^\sigma(a)=\chi(a)^\sigma$ for $a\in\F_0 G$.

 \end{lemma}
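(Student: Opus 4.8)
The plan is to deduce everything directly from \prettyref{lem:sigmamatrixrep} by carefully tracking the identification $\iota_0^\sigma\colon\F G\to(\F G)^\sigma$. Recall that, under the abuse of notation introduced above, the symbol $\rho^\sigma$ now denotes the representation of $\F G$ obtained by precomposing with $\iota_0^\sigma$ the honest $(\F G)^\sigma$-representation afforded by $M^\sigma$; to keep the two roles apart I would temporarily write $\widetilde\rho$ and $\widetilde\chi$ for the $(\F G)^\sigma$-representation and the $(\F G)^\sigma$-character afforded by $M^\sigma$ (these are the $\rho^\sigma$ and $\eta^\sigma$ of \prettyref{lem:sigmamatrixrep} with $A=\F G$ and $\eta=\chi$). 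With this convention, for $a\in\F G$ we have $\rho^\sigma(a)=\widetilde\rho(\iota_0^\sigma(a))=\widetilde\rho\big(\iota_0(a)^\sigma\big)$ and likewise $\chi^\sigma(a)=\widetilde\chi\big(\iota_0(a)^\sigma\big)$.

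First I would note that $\mathfrak B^\sigma$ is a basis of $M^\sigma$, as observed in the text, so both sides of the first asserted equality are defined. Applying \prettyref{lem:sigmamatrixrep} to the $\F G$-module $M$, with the element called $a$ there taken to be $\iota_0(a)\in\F G$, yields
\[\big[\widetilde\rho\big(\iota_0(a)^\sigma\big)\big]_{\mathfrak B^\sigma}=\big[\rho(\iota_0(a))\big]_{\mathfrak B}^\sigma\quad\hbox{ and }\quad\widetilde\chi\big(\iota_0(a)^\sigma\big)=\chi(\iota_0(a))^\sigma.\]
Substituting the identifications $\widetilde\rho\big(\iota_0(a)^\sigma\big)=\rho^\sigma(a)$ and $\widetilde\chi\big(\iota_0(a)^\sigma\big)=\chi^\sigma(a)$ from the previous paragraph into the left-hand sides gives exactly the two displayed formulas of the lemma.

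Finally, for the last assertion I would observe that if $a=\sum_{g\in G}\alpha_g g$ with all $\alpha_g\in\F_0$, then each $\alpha_g$ is fixed by $\sigma^{-1}$ (since $\sigma$ restricts to the identity on the prime subfield), so $\iota_0(a)=\sum_{g\in G}\alpha_g^{\sigma^{-1}}g=a$. Plugging $\iota_0(a)=a$ into $\chi^\sigma(a)=\chi(\iota_0(a))^\sigma$ gives $\chi^\sigma(a)=\chi(a)^\sigma$. There is essentially no obstacle here: the statement is pure bookkeeping on top of \prettyref{lem:sigmamatrixrep}, and the only point requiring care is the double meaning of $\rho^\sigma$ and $\chi^\sigma$ (objects attached to $(\F G)^\sigma$ versus objects attached to $\F G$ via $\iota_0^\sigma$), so the write-up should spend its effort making that translation explicit rather than on any computation.
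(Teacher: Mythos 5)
Your proposal is correct and matches the paper's (implicit) argument: the paper states this lemma as an immediate rephrasing of \prettyref{lem:sigmamatrixrep} via the identification $\iota_0^\sigma$, which is exactly the bookkeeping you carry out, including the observation that $\iota_0$ fixes $\F_0 G$ pointwise for the final assertion. Nothing is missing.
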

 
 \subsection{$\sigma$-Twists on (Co)-induced Modules}\label{sec:twistcoind}
 
 Let $P\leq G$ be a subgroup, so that $\F P\subseteq \F G$ is a subalgebra.  Note that $\iota_0^\sigma$ restricts to an isomorphism $\iota_0^\sigma\colon \F P\rightarrow(\F P)^\sigma$.  Given an $\F P$-module $N$, we obtain the coinduced module $\mathfrak{F}_P^G(N):=\Hom_{\F P}(\F G, N)$, which is an $\F G$-module with action
\[(a f)(x):=f(xa),\] for $x, a\in\F G$.  Since $G$ is finite, this module is naturally isomorphic to the induced module. Hence, $\mathfrak{F}_P^G(N)$ affords the induced character $\ind_P^G(\lambda)$, where $\lambda$ is the character of $\F P$ afforded by $N$. 

\begin{lemma}\label{lem:sigmacoinduced}
Let $P\leq G$ be a subgroup and let $N$ be an $\F P$-module.  Then the map
\[\varphi\colon\mathfrak{F}_P^G(N)^\sigma\rightarrow\mathfrak{F}_P^G(N^\sigma),\] defined by $\varphi(f^\sigma)(a):=f(\iota_0(a))^\sigma$ for all $a\in \F G$ and $f\in\mathfrak{F}_P^G(N)$, is an isomorphism of $\F G$-modules.
\end{lemma}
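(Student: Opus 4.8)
The plan is to verify directly that the proposed map $\varphi$ is well-defined, $\F$-linear, $\F G$-equivariant, and bijective, in that order. First I would check that $\varphi(f^\sigma)$ actually lands in $\mathfrak{F}_P^G(N^\sigma)=\Hom_{\F P}(\F G, N^\sigma)$: given $f\in\mathfrak{F}_P^G(N)=\Hom_{\F P}(\F G,N)$, the map $a\mapsto f(\iota_0(a))^\sigma$ is a composite of the additive map $\iota_0$, the $\F P$-linear map $f$, and the $\sigma$-semilinear bijection $-^\sigma\colon N\to N^\sigma$; for $c\in\F P$ one computes $f(\iota_0(ca))^\sigma=f(\iota_0(c)\iota_0(a))^\sigma=(\iota_0(c)f(\iota_0(a)))^\sigma=\iota_0^\sigma(c)\,f(\iota_0(a))^\sigma=c\cdot\big(f(\iota_0(a))^\sigma\big)$, using that $\iota_0$ is a ring homomorphism, that $f$ is $\F P$-linear, the defining relation $a^\sigma m^\sigma=(am)^\sigma$, and the way $M^\sigma$ is viewed as an $\F P$-module via $\iota_0^\sigma$ (as set up in \prettyref{sec:twistcoind}). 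So $\varphi(f^\sigma)$ is genuinely $\F P$-linear with values in $N^\sigma$.

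Next I would check $\F$-linearity of $\varphi$ as a map of $\sigma$-twisted spaces. Additivity is immediate since $\iota_0$, $f$, and $-^\sigma$ are all additive. For scalars $\alpha\in\F$, recall $\alpha f^\sigma=(\alpha^{\sigma^{-1}}f)^\sigma$ in $\mathfrak{F}_P^G(N)^\sigma$, so $\varphi(\alpha f^\sigma)(a)=\varphi((\alpha^{\sigma^{-1}}f)^\sigma)(a)=(\alpha^{\sigma^{-1}}f)(\iota_0(a))^\sigma=(\alpha^{\sigma^{-1}}f(\iota_0(a)))^\sigma=\alpha\cdot f(\iota_0(a))^\sigma=(\alpha\,\varphi(f^\sigma))(a)$, where the last two equalities use the scalar action on $N^\sigma$ and on $\mathfrak{F}_P^G(N^\sigma)$ respectively. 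Then I would verify $\F G$-equivariance: for $b\in\F G$, on one side $(b\cdot f^\sigma)$ in $\mathfrak{F}_P^G(N)^\sigma$ (viewed as an $\F G$-module via $\iota_0^\sigma$) equals $\iota_0^\sigma(b)f^\sigma=(\iota_0(b)f)^\sigma$, so $\varphi(b\cdot f^\sigma)(a)=(\iota_0(b)f)(\iota_0(a))^\sigma=f(\iota_0(a)\iota_0(b))^\sigma=f(\iota_0(ab))^\sigma$ by the coinduced-module action and $\iota_0$ multiplicativity; on the other side, $(b\cdot\varphi(f^\sigma))(a)=\varphi(f^\sigma)(ab)=f(\iota_0(ab))^\sigma$, so the two agree. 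Bijectivity is clear because $\iota_0$ and $-^\sigma$ are bijections, so $\varphi$ has an explicit two-sided inverse of the same shape (built from $\iota_0^{-1}=\iota_0$-with-$\sigma$ replaced by $\sigma^{-1}$ and $-^{\sigma^{-1}}$).

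There is no serious obstacle here; the statement is a bookkeeping lemma. The one point that needs genuine care — and the place where sign/direction errors are easy to make — is tracking exactly which of $\sigma$ or $\sigma^{-1}$ appears at each stage: the twist $-^\sigma$ is $\sigma$-semilinear, the scalar action on $V^\sigma$ is via $\sigma^{-1}$, and $\iota_0$ is $\sigma^{-1}$-semilinear, while $\iota_0^\sigma=(-)^\sigma\circ\iota_0$ is honestly $\F$-linear. I would write the equivariance computation so that it never needs to invert $\sigma$ explicitly, relying only on the three structural facts that $\iota_0$ is a ring homomorphism, that $a^\sigma m^\sigma=(am)^\sigma$, and that the $\F G$-action on any $\sigma$-twist goes through $\iota_0^\sigma$. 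Once those are in place the verification is essentially a one-line diagram chase, and I would present it as such rather than belaboring each identity.
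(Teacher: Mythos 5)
Your proof is correct and takes essentially the same route as the paper: the same three structural facts (multiplicativity of $\iota_0$, the relation $a^\sigma m^\sigma=(am)^\sigma$, and the coinduced action $(af)(x)=f(xa)$) drive the equivariance computation, which matches the paper's verbatim. The only organizational difference is that the paper factors $\varphi$ as the vector-space isomorphism from \prettyref{lem:sigmahoms} (which gives linearity and bijectivity for free) followed by precomposition with $\iota_0^\sigma$, whereas you verify well-definedness, linearity, equivariance, and bijectivity of the composite formula directly.
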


\begin{proof}
From \prettyref{lem:sigmahoms}, we have an isomorphism of vector spaces
\[\phi\colon \mathfrak{F}_P^G(N)^\sigma=\Hom_{\F P}(\F G, N)^\sigma \rightarrow \Hom_{(\F P)^\sigma}((\F G)^\sigma, N^\sigma)= \Hom_{\F P} ((\F G)^\sigma, N^\sigma)\] such that for $x\in \F G$ and $f\in \mathfrak{F}_P^G(N)^\sigma$, $\phi(f^\sigma)(x^\sigma)=f(x)^\sigma$.  Then for $a\in \F G$, we have
\[\phi(af^\sigma)(x^\sigma)=\phi((\iota_0(a)f)^\sigma)(x^\sigma)=(\iota_0(a)f(x))^\sigma=f(x\iota_0(a))^\sigma\] and 
\[(a\phi(f^\sigma))(x^\sigma)=\phi(f^\sigma)(x^\sigma\iota_0^\sigma(a))=\phi(f^\sigma)((x\iota_0(a))^\sigma)=f(x\iota_0(a))^\sigma,\] so that $\phi$ is an isomorphism of $\F G$-modules.

Since $\iota_0^\sigma \colon\F G\rightarrow (\F G)^\sigma$ is an isomorphism of $\F G$-modules, we have an isomorphism of $\F G$-modules $\wt{\iota_0}^\sigma\colon \End_{\F P}((\F G)^\sigma, N^\sigma)\rightarrow \End_{\F P}(\F G, N^\sigma)$ defined by $\wt{\iota_0}^\sigma(f)(x):=f(\iota_0^\sigma(x))$.  The composition $\wt{\iota_0}^\sigma\circ\phi$ gives the desired isomorphism.
\end{proof}

Considering the endomorphism algebras, we obtain the following corollary:

\begin{corollary}\label{cor:sigmacoinducedend}
Let $P\leq G$ be a subgroup and let $N$ be an $\F P$-module.  There is an isomorphism of $\F$-algebras
\[\iota\colon\End_{\F G}(\mathfrak{F}_P^G(N))^\sigma\rightarrow \End_{\F G}(\mathfrak{F}_P^G(N^\sigma))\] given by $\iota(B^\sigma)(\varphi(f^\sigma))=\varphi((Bf)^\sigma)$ for $B\in\End_{\F G}(\mathfrak{F}_P^G(N))$ and $f\in \mathfrak{F}_P^G(N)$.
\end{corollary}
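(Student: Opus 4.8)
The plan is to build the isomorphism $\iota$ by transporting the vector-space isomorphism $\varphi$ of \prettyref{lem:sigmacoinduced} to the level of endomorphism algebras, exactly as \prettyref{cor:sigmaend} is derived from \prettyref{lem:sigmahoms}. First I would invoke \prettyref{cor:sigmaend} with $A = \F G$ and $M = \mathfrak{F}_P^G(N)$ to obtain the $\F$-algebra isomorphism $\phi_0\colon\End_{\F G}(\mathfrak{F}_P^G(N))^\sigma\rightarrow\End_{(\F G)^\sigma}\big((\mathfrak{F}_P^G(N))^\sigma\big)$, $\phi_0(B^\sigma)(f^\sigma) = (Bf)^\sigma$. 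Then I would use the $\F G$-module isomorphism $\varphi\colon(\mathfrak{F}_P^G(N))^\sigma\rightarrow\mathfrak{F}_P^G(N^\sigma)$ from \prettyref{lem:sigmacoinduced} to conjugate: any $\F$-algebra isomorphism $T\colon X\to Y$ of module-endomorphism algebras, together with a module isomorphism $u\colon X'\to Y'$ matching up the modules, induces $B\mapsto u\circ B\circ u^{-1}$ on the endomorphism algebras. Concretely, define $\iota(B^\sigma) := \varphi\circ\phi_0(B^\sigma)\circ\varphi^{-1}\in\End_{\F G}(\mathfrak{F}_P^G(N^\sigma))$, after noting (as in the remark following \prettyref{cor:sigmaend}) that the $(\F G)^\sigma$-module structure on $(\mathfrak{F}_P^G(N))^\sigma$ is the one being used.

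The remaining work is to check that this $\iota$ satisfies the stated formula $\iota(B^\sigma)(\varphi(f^\sigma)) = \varphi((Bf)^\sigma)$, which is immediate once the pieces are unwound: $\iota(B^\sigma)(\varphi(f^\sigma)) = \varphi\big(\phi_0(B^\sigma)(f^\sigma)\big) = \varphi\big((Bf)^\sigma\big)$. One should also verify that $\iota$ is well-defined, i.e. that $\varphi\circ\phi_0(B^\sigma)\circ\varphi^{-1}$ is genuinely an $\F G$-module endomorphism of $\mathfrak{F}_P^G(N^\sigma)$ — this follows because $\varphi$ is an $\F G$-module isomorphism and $\phi_0(B^\sigma)$ is an $\F G$-module endomorphism (the latter via the identification of $(\F G)^\sigma$-modules with $\F G$-modules through $\iota_0^\sigma$, already used implicitly in \prettyref{lem:sigmacoinduced}). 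Bijectivity and the algebra-homomorphism property of $\iota$ are inherited from those of $\phi_0$ (\prettyref{cor:sigmaend}) and the conjugation operation; one checks $\iota((B_1B_2)^\sigma) = \iota(B_1^\sigma)\iota(B_2^\sigma)$ and $\iota$ respects the $\F$-linear structure, using that $-^\sigma$ on $\End_{\F G}(\mathfrak{F}_P^G(N))$ is a ring homomorphism and $\varphi$ is $\F$-linear.

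I do not anticipate a genuine obstacle here; this is a bookkeeping corollary. The only point requiring a little care is keeping straight which of the three module structures (over $\F G$, over $(\F G)^\sigma$ via $\iota_0^\sigma$, and the twisted structure on $M^\sigma$) is in play at each step, and confirming that the "conjugate by $\varphi$" recipe indeed lands an algebra homomorphism in the target. An alternative, essentially equivalent route is to apply \prettyref{lem:sigmahoms} directly with $M = N = \mathfrak{F}_P^G(N)$ (taking $C = A = \F G$) to get $\End_{\F G}(\mathfrak{F}_P^G(N))^\sigma\cong\End_{(\F G)^\sigma}((\mathfrak{F}_P^G(N))^\sigma)$, then post-compose with the isomorphism induced by $\varphi$ and with the identification $\End_{(\F G)^\sigma}(-)\cong\End_{\F G}(-)$ coming from $\iota_0^\sigma$; this parallels the two-step structure of the proof of \prettyref{lem:sigmacoinduced} itself. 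Either way, the explicit formula in the statement is exactly what the composition evaluates to.
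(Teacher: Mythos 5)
Your proposal is correct and follows essentially the same route as the paper: apply \prettyref{cor:sigmaend} to $\End_{\F G}(\mathfrak{F}_P^G(N))^\sigma$, transport along the isomorphism $\varphi$ of \prettyref{lem:sigmacoinduced}, and identify $\End_{(\F G)^\sigma}(\mathfrak{F}_P^G(N^\sigma))$ with $\End_{\F G}(\mathfrak{F}_P^G(N^\sigma))$ via $\iota_0^\sigma$, reading off the stated formula from the definitions of the maps. The paper's proof is exactly this chain of identifications, so no further comment is needed.
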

\begin{proof}
From \prettyref{cor:sigmaend} and \prettyref{lem:sigmacoinduced}, we have isomorphisms of $\F$-algebras $\End_{\F G}(\mathfrak{F}_P^G(N))^\sigma\cong \End_{(\F G)^\sigma}((\mathfrak{F}_P^G(N))^\sigma)\cong\End_{(\F G)^\sigma}(\mathfrak{F}_P^G(N^\sigma))=\End_{\F G}(\mathfrak{F}_P^G(N^\sigma))$.  Considering the definitions of the maps there, we obtain the result.
\end{proof}

\subsection{$\sigma$-Twists and Endomorphism Algebras}
Keeping the notation of the previous section, we let $\mathcal{H}_G(P,N)$ denote the opposite algebra \[\mathcal{H}_G(P,N):=\End_{\F G}(\mathfrak{F}_P^G(N))^\mathrm{opp}.\]  

Assume now that $\mathfrak{F}_P^G(N)$ is a semisimple $\F G$-module, so that $\mathcal{H}_G(P,N)$ is a semisimple $\F$-algebra.  Then we denote by $\irr(\F G|N)\subseteq\irr(\F G)$ the isomorphism classes of simple submodules of $\mathfrak{F}_P^G(N)$.  The map $\mathscr{H}_N\colon\irr(\F G|N)\rightarrow\irr(\mathcal{H}_G(P,N))$ given by \[\mathscr{H}_N(S):=\Hom_{\F G}(\mathfrak{F}_P^G(N),S)\] defines a bijection between the constituents of $\mathfrak{F}_P^G(N)$ and the irreducible $\mathcal{H}_G(P,N)$-modules.  The $\mathcal{H}_G(P,N)$-module structure for $\mathscr{H}_N(S)$ is given by 
\[Bf:=f\circ B\] for $B\in\mathcal{H}_G(P, N)$ and $f\in\mathscr{H}_N(S)$.

If $S$ is a simple $\F G$-submodule of $\mathfrak{F}_P^G(N)$, then $S^\sigma$ is a simple $\F G$-submodule of $\mathfrak{F}_P^G(N^\sigma)$, where we have made the identification $\mathfrak{F}_P^G(N)^\sigma\cong \mathfrak{F}_P^G(N^\sigma)$ from \prettyref{lem:sigmacoinduced}.  In this way, we may view $\mathscr{H}_{N^\sigma}(S^\sigma)$ as an $\mathcal{H}_G(P,N^\sigma)$-module.

Further, through the isomorphism $\mathcal{H}_G(P,N)^\sigma\cong \mathcal{H}_G(P, N^\sigma)$ guaranteed by \prettyref{cor:sigmacoinducedend}, we see that $\mathscr{H}_N(S)^\sigma$ may also be viewed as an $\mathcal{H}_G(P, N^\sigma)$-module.
 
 \begin{proposition}\label{prop:sigmasimplebij}
 Keeping the notation above, let $S$ be a simple $\F G$-submodule of $\mathfrak{F}_P^G(N)$.  Then there is an isomorphism 
 \[\phi\colon\mathscr{H}_N(S)^\sigma\rightarrow\mathscr{H}_{N^\sigma}(S^\sigma)\] of $\mathcal{H}_G(P,N^\sigma)$-modules.
 \end{proposition}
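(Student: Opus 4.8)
The plan is to simply unravel the definitions and chain together the isomorphisms already established in the previous subsections, checking that the resulting composite intertwines the two module structures. Recall that $\mathscr{H}_N(S) = \Hom_{\F G}(\mathfrak{F}_P^G(N), S)$ with $\mathcal{H}_G(P,N)$ acting by precomposition, $Bf = f\circ B$. First I would write down the obvious candidate map: given $f \in \mathscr{H}_N(S)$, we want to send $f^\sigma$ to the homomorphism $S^\sigma \to \mathfrak{F}_P^G(N^\sigma)$... wait, more precisely to an element of $\Hom_{\F G}(\mathfrak{F}_P^G(N^\sigma), S^\sigma)$. The natural choice is the map $\varphi(g^\sigma) \mapsto f(g)^\sigma$ (using the identification $\varphi\colon \mathfrak{F}_P^G(N)^\sigma \xrightarrow{\sim} \mathfrak{F}_P^G(N^\sigma)$ of \prettyref{lem:sigmacoinduced}), together with the observation that $S^\sigma$ is indeed a simple submodule of $\mathfrak{F}_P^G(N^\sigma)$ — since $-^\sigma$ is a $\sigma$-semilinear ring isomorphism carrying $S$ into $\mathfrak{F}_P^G(N)^\sigma \cong \mathfrak{F}_P^G(N^\sigma)$, it sends submodules to submodules and preserves simplicity. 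So the candidate is $\phi(f^\sigma) := (-)^\sigma \circ f \circ \varphi^{-1}$, which is well defined precisely because $\varphi$ is the $\F G$-module isomorphism from \prettyref{lem:sigmacoinduced}.

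Next I would verify that $\phi$ is a homomorphism of abelian groups (immediate, since $-^\sigma$ on morphism spaces is additive) and $\sigma$-semilinear in the appropriate sense, so that it is an $\F$-linear map $\mathscr{H}_N(S)^\sigma \to \mathscr{H}_{N^\sigma}(S^\sigma)$; bijectivity follows because every step is built from isomorphisms (apply the same construction with $\sigma^{-1}$, or invoke \prettyref{lem:sigmahoms} directly with $C = \F G$, $M = \mathfrak{F}_P^G(N)$, $N = S$). The crux is the compatibility with the $\mathcal{H}_G(P,N^\sigma)$-action. Here one must be careful about which action is meant: on $\mathscr{H}_N(S)^\sigma$ the algebra $\mathcal{H}_G(P,N)^\sigma$ acts, and we transport this to an $\mathcal{H}_G(P,N^\sigma)$-action along the isomorphism $\iota$ of \prettyref{cor:sigmacoinducedend}; on $\mathscr{H}_{N^\sigma}(S^\sigma)$ it is the direct precomposition action. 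So what has to be checked is that for $B \in \mathcal{H}_G(P,N)$ and $f \in \mathscr{H}_N(S)$,
\[
\phi\bigl(B^\sigma \cdot f^\sigma\bigr) = \iota(B^\sigma)\cdot \phi(f^\sigma),
\]
where on the left $B^\sigma \cdot f^\sigma = (Bf)^\sigma = (f\circ B)^\sigma$. Unwinding, the left side is $(-)^\sigma \circ f \circ B \circ \varphi^{-1}$, while the right side is $\bigl((-)^\sigma \circ f \circ \varphi^{-1}\bigr)\circ \iota(B^\sigma)$. These agree if and only if $\varphi^{-1}\circ\varphi \circ B \circ \varphi^{-1} = B\circ\varphi^{-1}$ matches $\varphi^{-1}\circ\iota(B^\sigma)$, i.e. if $\iota(B^\sigma) = \varphi\circ B\circ\varphi^{-1}$ as an endomorphism of $\mathfrak{F}_P^G(N^\sigma)$ — and this is exactly the content of the formula $\iota(B^\sigma)(\varphi(g^\sigma)) = \varphi((Bg)^\sigma)$ recorded in \prettyref{cor:sigmacoinducedend}. (One subtlety to flag: $\mathcal{H}_G(P,N)$ is the \emph{opposite} algebra of the endomorphism ring, so the direction of composition in $Bf = f\circ B$ matches the opposite multiplication, and one should double-check that $\iota$ as stated respects the opposite structure; this is automatic from the way \prettyref{cor:sigmacoinducedend} was phrased, but it deserves a sentence.)

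I expect the main obstacle to be purely bookkeeping: keeping straight the three layers of identification — the $\sigma$-twist on morphism spaces, the identification $\mathfrak{F}_P^G(N)^\sigma \cong \mathfrak{F}_P^G(N^\sigma)$ via $\varphi$, and the algebra isomorphism $\iota$ — and confirming that the action formulas in \prettyref{cor:sigmacoinducedend} are precisely the ones needed to make the intertwining square commute, with the opposite-algebra convention tracked correctly throughout. No genuinely new idea is required beyond the lemmas already proved; the proposition is a formal consequence, and the write-up should consist of (i) defining $\phi$, (ii) citing \prettyref{lem:sigmahoms}/\prettyref{lem:sigmacoinduced} for bijectivity and $\F$-linearity, and (iii) the short computation above showing $\phi$ is $\mathcal{H}_G(P,N^\sigma)$-linear via \prettyref{cor:sigmacoinducedend}.
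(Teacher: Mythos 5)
Your proposal is correct and follows essentially the same route as the paper's proof: define $\phi$ by $\phi(f^\sigma)(m^\sigma)=f(m)^\sigma$ via \prettyref{lem:sigmahoms} (with the identification $\mathfrak{F}_P^G(N)^\sigma\cong\mathfrak{F}_P^G(N^\sigma)$ from \prettyref{lem:sigmacoinduced}), and then verify the $\mathcal{H}_G(P,N^\sigma)$-equivariance by the short computation that reduces to the formula $\iota(B^\sigma)(\varphi(g^\sigma))=\varphi((Bg)^\sigma)$ of \prettyref{cor:sigmacoinducedend}. The paper simply suppresses the isomorphisms $\varphi$ and $\iota$ notationally and writes $B^\sigma(m^\sigma)=(B(m))^\sigma$, whereas you keep them explicit; the content is identical.
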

 
 \begin{proof}
 From \prettyref{lem:sigmahoms}, there is an isomorphism $\phi\colon\mathscr{H}_N(S)^\sigma\rightarrow\mathscr{H}_{N^\sigma}(S^\sigma)$ as vector spaces over $\F$ given by $\phi(f^\sigma)(m^\sigma):=f(m)^\sigma$ for $f\in \mathscr{H}_N(S)$ and $m\in\mathfrak{F}_P^G(N)$.  Note that, again, we have identified $\mathfrak{F}_P^G(N)^\sigma$ with $\mathfrak{F}_P^G(N^\sigma)$ through the isomorphism in \prettyref{lem:sigmacoinduced}.  Let $B\in\mathcal{H}_G(P, N)$, $f\in \mathscr{H}_N(S)$, and $m\in\mathfrak{F}_P^G(N)$.  Suppressing the notation for the isomorphisms $\varphi$ and $\iota$ in \prettyref{lem:sigmacoinduced} and \prettyref{cor:sigmacoinducedend} and simply identifying the relevant modules and algebras, we may write $B^\sigma (m^\sigma)=(B(m))^\sigma$.
 
 Now, we have 
 \[\phi(B^\sigma f^\sigma)(m^\sigma) = \phi(f^\sigma\circ B^\sigma)(m^\sigma)=\phi((f\circ B)^\sigma)(m^\sigma)=((f\circ B)(m))^\sigma = f(B(m))^\sigma,\] but also
 \[B^\sigma\phi(f^\sigma)(m^\sigma) = (\phi(f^\sigma)\circ B^\sigma)(m^\sigma)=\phi(f^\sigma)(B^\sigma(m^\sigma))=\phi(f^\sigma)(B(m)^\sigma)=f(B(m))^\sigma,\] so that $\phi$ is an isomorphism of $\mathcal{H}_G(P, N^\sigma)$-modules, as stated.
 \end{proof}

\section{The Action of Galois Automorphisms on Harish-Chandra Series}\label{sec:gallact}

In \cite{MalleSpathMcKay2}, G. Malle and B. Sp{\"a}th analyze the action of $\aut(G)$ on Harish-Chandra induced characters.  In this section, our aim is to apply the framework introduced in \prettyref{sec:gal twist} to provide analogous statements in the case of $\mathrm{Gal}(\Q_{|G|}/\Q)$ acting on characters lying in a given Harish-Chandra series.  Throughout, we primarily follow the notation and treatment found in \cite{MalleSpathMcKay2} and \cite[Chapter 10]{carter2}.

For finite groups $Y\leq X$ and $\psi\in\irr(Y)$  we write $\irr(X|\psi)$ for the constituents of the induced character $\ind_Y^X(\psi)$ and $X_\psi$ for the stabilizer in $X$ of $\psi$.  Further, for $\chi\in\irr(X)$, we write $\irr(Y|\chi)$ for the constituents of the restricted character $\mathrm{Res}_Y^X(\chi):=\chi|_Y$.  

Let $G$ be a finite group of Lie type, realized as the group of fixed points $\bG{G}^F$ of a connected reductive algebraic group $\bf{G}$ under a Frobenius endomorphism $F$.  Let $\bf{P}$ be an $F$-stable parabolic subgroup of $\bf{G}$ with Levi decomposition $\bf{P}=\bf{L}\bf{U}$, where $\bf{L}$ is an $F$-stable Levi subgroup of $\bf{G}$.  Write $L=\bG{L}^F$ and $P=\bG{P}^F$ for the corresponding Levi and parabolic subgroups, respectively, of $G$.  For $\lambda\in\irr(L)$ a cuspidal character, we obtain the Harish-Chandra induced character $R_L^G(\lambda)$ of $G$ via inflation of $\lambda$ to $P$ followed by inducing to $G$, and the irreducible constituents $\irr\left(G|R_L^G(\lambda)\right)$ of this character make up the $(L,\lambda)$ Harish-Chandra series of $G$.  In particular, when $\bG{P}=\bG{B}$ is an $F$-stable Borel subgroup and $L=T$ is a maximally split torus, the irreducible constituents of $R_T^G(\lambda)$ make up the so-called principal series of $G$, which we will  be particularly interested in later due to results of \cite{MalleSpathMcKay2}.  

It is known from the work of Howlett and Lehrer (see \cite{howlettlehrer80, howlettlehrer83}) that the members of $\irr\left(G|R_L^G(\lambda)\right)$ are in bijection with characters $\eta\in\irr(W(\lambda))$, where $W(\lambda)$ is the so-called {relative Weyl group with respect to $\lambda$}.  (We note that in fact, the original bijection required a ``twist" by a certain $2$-cycle, but that it was shown later that one could take this $2$-cycle to be trivial - see \cite{geck93}.)  Write $R_L^G(\lambda)_\eta$ for the constituent of $R_L^G(\lambda)$ corresponding to $\eta\in\irr(W(\lambda))$ through this bijection.  The goal of this section is to understand how $\sigma\in\gal(\Q_{|G|}/\Q)$ acts on these characters, in terms of this labeling.  In what follows, we will utilize the construction in \cite[Chapter 10]{carter2} for the appropriate modules and bijections.

\subsection{The General Setting}

Here we introduce the notation and setting that will be used throughout the remainder of \prettyref{sec:gallact}.  In particular, we adapt the more general setting as in \cite[Section 4]{MalleSpathMcKay2} of a finite group $G$ with a split $BN$-pair of characteristic $p$.  As in there, we assume that the Weyl group $W=N/(N\cap B)$ is of crystallographic type; $\Phi$ and $\Delta$ denote the root system and simple roots of $W$, respectively; and for $\alpha\in\Phi$, $s_{\alpha}\in W$ is the corresponding reflection.  

We fix a standard parabolic subgroup $P=UL$ of $G$ with Levi subgroup $L$ and unipotent radical $U$ and denote by $N(L)$ the group $N(L)=(N_G(L)\cap N)L$.  Further, $W(L)=N(L)/L$ is the relative Weyl group of $L$ in $G$.  We also fix a cuspidal character $\lambda\in\irr(L)$, afforded by a representation $\rho$ for $L$, and write $W(\lambda):=N(L)_\lambda/L$.  By an abuse of notation, we also denote by $\lambda$ and $\rho$ the irreducible character and representation, respectively, of $P$ obtained by inflation.   

Let $\F$ be a splitting field for $G$ which is a finite Galois extension of $\Q$ containing $\sqrt{p}$, and let $\sigma\in\aut(\F)$ be any automorphism of $\F$.  Let $M$ be an irreducible left $\F P$-module affording $\lambda$ and let $\rho$ denote the corresponding representation over $\F$.  We can construct a module affording $R_L^G(\lambda)=\ind_P^G(\la)$ by taking $\mathfrak{F}(\lambda):=\Hom_{\F P}(\F G, M)$ as in \prettyref{sec:twistcoind}. Then $R_L^G(\la^\sigma)$ is afforded by $\mathfrak{F}(\lambda^\sigma)=\Hom_{\F P}(\F G, M^\sigma)$.

Finally, as in  \cite[Section 4]{MalleSpathMcKay2}, $\Lambda$ will represent a fixed $N(L)$-equivariant extension map for $L\lhd N(L)$, which exists by \cite{geck93} and \cite[Theorem 8.6]{Lusztig84}.  In particular, this means that for each $\lambda\in\irr(L)$, $\Lambda(\lambda)=:\Lambda_{\lambda}$ is an irreducible character of $N(L)_\lambda$ that extends $\lambda$.

We remark that by definition of the action of $\sigma$ on $\lambda$, we have $N(L)_\lambda=N(L)_{\lambda^\sigma}$.  For each $w\in W(\lambda):=N(L)_{\lambda}/L$, we fix a preimage $\dot{w}\in N(L)$.  
Now, notice that $(\Lambda_\lambda)^\sigma$ and $\Lambda_{\lambda^\sigma}$ are two extensions of $\la^\sigma$ to $N(L)_{\lambda}$ that do not necessarily agree on $N(L)_{\lambda}\setminus L$, so that by Gallagher's theorem (see, for example, \cite[Corollary 6.17]{isaacs}), there is some well-defined linear character $\delta_{\lambda, \sigma}\in\irr(W(\la))$ such that 

\[(\Lambda_\lambda)^\sigma=\delta_{\lambda, \sigma}\Lambda_{\la^\sigma}.\]

Let $\widetilde{\rho}$ be an extension of $\rho$ affording $\Lambda_\la$.  Note that $\widetilde{\rho}':=\delta_{\la,\sigma}^{-1}\widetilde{\rho}^\sigma$ is then an extension of $\rho^\sigma$ affording $\Lambda_{\la^\sigma}$.

\subsection{Action on Basis Elements }

As in \cite[10.1.4-5]{carter2}, there is a basis for $\End_{\F G}(\fl{\la})$ comprised of elements $B_{w,\lambda}$ for $w\in W(\la)$, where $B_{w,\lambda}\in\e{\la}$ is defined by 

\[(B_{w,\lambda}f)(x):=\widetilde{\rho}(\dot{w}) f(\dot{w}^{-1} e_U x)\] for all $f\in \fl{\lambda}$ and $x\in \F G$, where \[e_U:=\frac{1}{|U|}\sum_{u\in U} u\] and $U$ is the unipotent radical of $P$.

Similarly, $\e{\la^\sigma}$ has a basis comprised of $B_{w,\lambda^\sigma}$ with \[(B_{w,\lambda^\sigma}f)(x):=\widetilde{\rho}'(\dot{w}) f(\dot{w}^{-1} e_U x)\] for all $f\in \fl{\lambda^\sigma}$ and $x\in \F G$.

Recall from \prettyref{sec:gal twist} that we have a $\sigma$-semilinear isomorphism of rings \[-^\sigma\colon\e{\la}\rightarrow\e{\la^\sigma},\] where we identify $\fl{\la}^\sigma$ with $\fl{\la^\sigma}$ and $\e{\la}^\sigma$ with $\e{\la^\sigma}$ according to \prettyref{lem:sigmacoinduced} and \prettyref{cor:sigmacoinducedend}. Throughout, we will continue to make these identifications and to suppress the notation of the isomorphisms $\varphi$ and $\iota$ introduced there. We observe in the next lemma how the basis elements $B_{w,\la}$ and $B_{w,\la^\sigma}$ are related under this map.

\begin{lemma}\label{lem:iotaBw}
For each $w\in W(\la)$, we have
\[B_{w,\la}^\sigma=\delta_{\la,\sigma}(w)B_{w,\la^\sigma}.\]
\end{lemma}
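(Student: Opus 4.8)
The plan is to compute the action of the $\sigma$-semilinear isomorphism $-^\sigma\colon\e{\la}\to\e{\la^\sigma}$ directly on the element $B_{w,\la}$, using the explicit formula defining $B_{w,\la}$ together with the dictionary relating $\widetilde\rho$ and $\widetilde\rho'=\delta_{\la,\sigma}^{-1}\widetilde\rho^\sigma$. Recall from \prettyref{cor:sigmacoinducedend} (and the subsequent identification) that for $B\in\e{\la}$ and $f\in\fl{\la}$ the twist $B^\sigma$ acts on $\fl{\la^\sigma}\cong\fl{\la}^\sigma$ by $B^\sigma(f^\sigma)=(Bf)^\sigma$, where under the isomorphism $\varphi$ of \prettyref{lem:sigmacoinduced} we have $\varphi(f^\sigma)(x)=f(\iota_0(x))^\sigma$. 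So the first step is to write out $(B_{w,\la}f)^\sigma$ as an element of $\fl{\la^\sigma}$ by applying $\varphi$: for $x\in\F G$,
\[
\varphi\big((B_{w,\la}f)^\sigma\big)(x)=\big((B_{w,\la}f)(\iota_0(x))\big)^\sigma=\big(\widetilde\rho(\dot w)\,f(\dot w^{-1}e_U\iota_0(x))\big)^\sigma.
\]

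Next I would identify this with the action of $\delta_{\la,\sigma}(w)B_{w,\la^\sigma}$ on the corresponding element of $\fl{\la^\sigma}$. Applying the defining formula for $B_{w,\la^\sigma}$ to $\varphi(f^\sigma)$ gives
\[
\big(B_{w,\la^\sigma}\varphi(f^\sigma)\big)(x)=\widetilde\rho'(\dot w)\,\varphi(f^\sigma)(\dot w^{-1}e_Ux)=\widetilde\rho'(\dot w)\,f\big(\iota_0(\dot w^{-1}e_Ux)\big)^\sigma.
\]
The key points to check are: (i) $\iota_0$ fixes group elements and fixes $e_U$ — indeed $\iota_0(g)=g$ for $g\in G$ since the coefficients are in $\F_0$, and $e_U=\frac1{|U|}\sum_{u\in U}u$ has rational coefficients, so $\iota_0(e_U)=e_U$; hence $\iota_0(\dot w^{-1}e_Ux)=\dot w^{-1}e_U\iota_0(x)$; and (ii) the matrix/operator $\widetilde\rho(\dot w)$ acting on $M$ becomes, after the $\sigma$-twist, the operator $\widetilde\rho^\sigma(\dot w)$ on $M^\sigma$, which by \prettyref{lem:sigmamatrixrep2} corresponds to applying $\sigma$ entrywise; combined with $\widetilde\rho^\sigma=\delta_{\la,\sigma}(\dot w)\,\widetilde\rho'(\dot w)$ (the scalar $\delta_{\la,\sigma}(w)$ coming from $\widetilde\rho'=\delta_{\la,\sigma}^{-1}\widetilde\rho^\sigma$), one gets $\big(\widetilde\rho(\dot w)\,m\big)^\sigma=\widetilde\rho^\sigma(\dot w)\,m^\sigma=\delta_{\la,\sigma}(w)\,\widetilde\rho'(\dot w)\,m^\sigma$ for $m\in M$. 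Plugging these in shows $\varphi\big((B_{w,\la}f)^\sigma\big)=\delta_{\la,\sigma}(w)\,B_{w,\la^\sigma}\varphi(f^\sigma)$, which is exactly the statement $B_{w,\la}^\sigma=\delta_{\la,\sigma}(w)B_{w,\la^\sigma}$ after unwinding the identifications.

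The main obstacle I expect is bookkeeping rather than conceptual difficulty: one must be careful about \emph{where} the $\sigma$-semilinearity is used and keep straight the three layers of identification — $\fl{\la}^\sigma\cong\fl{\la^\sigma}$ via $\varphi$, $\e{\la}^\sigma\cong\e{\la^\sigma}$ via $\iota$, and the passage $M\rightsquigarrow M^\sigma$ for the representation $\widetilde\rho$. In particular one must justify precisely that the "twist of the operator $\widetilde\rho(\dot w)$" appearing when one pushes $-^\sigma$ through the formula for $B_{w,\la}$ is the operator built from $\widetilde\rho^\sigma$, and hence from $\widetilde\rho'$ up to the scalar $\delta_{\la,\sigma}(w)$. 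This is where the relation $(\Lambda_\la)^\sigma=\delta_{\la,\sigma}\Lambda_{\la^\sigma}$ — equivalently $\widetilde\rho^\sigma$ and $\widetilde\rho'$ differ by the linear character $\delta_{\la,\sigma}$ on $N(L)_\la/L$ — enters, and it is essential that $w\in W(\la)$ so that $\delta_{\la,\sigma}(w)$ is defined and that $\dot w\in N(L)_\la$. Everything else (the invariance of $e_U$ and of group elements under $\iota_0$, the compatibility of $\varphi$ with the module actions) is either routine or already recorded in \prettyref{lem:sigmamatrixrep2}, \prettyref{lem:sigmacoinduced}, and \prettyref{cor:sigmacoinducedend}, so the proof should be short once the identifications are fixed.
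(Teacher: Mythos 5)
Your proposal is correct and follows essentially the same route as the paper's proof: unwind $B_{w,\la}^\sigma$ through the identifications of \prettyref{lem:sigmacoinduced} and \prettyref{cor:sigmacoinducedend}, use that $\dot w^{-1}e_U$ has rational coefficients and is thus fixed by $\iota_0$, and convert $(\wt{\rho}(\dot w)\,m)^\sigma$ into $\delta_{\la,\sigma}(w)\,\wt{\rho}'(\dot w)\,m^\sigma$ via the definition $\wt{\rho}'=\delta_{\la,\sigma}^{-1}\wt{\rho}^\sigma$. The only cosmetic difference is that you compute both sides separately and compare, while the paper runs a single chain of equalities.
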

\begin{proof}
Let $f\in\fl{\la}$ and $a\in\F G$.  Using the descriptions above, \prettyref{cor:sigmacoinducedend}, and \prettyref{lem:sigmacoinduced}, we see \[B_{w,\la}^\sigma f^\sigma(a)=(B_{w,\la}f)^\sigma(a)=B_{w, \la} f(\iota_0(a))^\sigma=\left(\wt{\rho}(\dot{w})f(\dot{w}^{-1}e_U\iota_0(a))\right)^\sigma.\]  Further, since $\dot{w}^{-1}e_U\in\Q G$, and hence is fixed by $\iota_0$, this means
\[B_{w,\la}^\sigma f^\sigma(a)=\delta_{\lambda,\sigma}({w})\wt{\rho}'(\dot{w})f(\iota_0(\dot{w}^{-1}e_Ua))^\sigma=\delta_{\la,\sigma}({w})\wt{\rho}'(\dot{w})f^\sigma(\dot{w}^{-1}e_Ua)=\delta_{\la,\sigma}({w})(B_{w,\la^\sigma}f^\sigma)(a),\] which proves the statement.
\end{proof}

We define $\Omega, p_{\alpha,\lambda}, \Phi_{\lambda}, \Delta_{\lambda}, R(\lambda),$ and $C(\lambda)$ as in \cite[4.B]{MalleSpathMcKay2}, so that $W(\lambda)=C(\lambda)\ltimes R(\lambda)$ and $R(\lambda)=\langle s_{\alpha} \mid \alpha\in \Phi_\lambda\rangle$ is a Weyl group with simple system $\Delta_{\lambda}$.    

To be more precise, $\Omega$ is the set
\[\Omega=\{\alpha\in\Phi\setminus\Phi_L\mid w(\Delta_L\cup\{\alpha\})\subseteq\Delta\hbox{ for some $w\in W$ and }(w_0^Lw_0^{\alpha})^2=1\}.\]  Here $w_0^L, w_0^\alpha$ are the longest elements in $W(L)$ and $\langle W(L), s_\alpha\rangle$, respectively, and $\Phi_L\subseteq\Phi$ is the root system of $W(L)$ with simple system $\Delta_L\subseteq\Delta$.  Then for $\alpha\in\Omega$, letting $L_\alpha$ denote the standard Levi subgroup of $G$ with simple system $\Delta_L\cup\{\alpha\}$, $L$ is a standard Levi subgroup of $L_\alpha$ and  $p_{\alpha,\la}\geq1$ is defined to be the ratio between the degrees of the two constituents of $R_L^{L_\alpha}(\la)$.  Then $\Phi_{\la}$ is the subset of $\Omega$ consisting of $\alpha$ such that $s_\alpha\in W(\la)$ and $p_{\alpha,\la}\neq1$.  This is a root system with simple roots $\Delta_\la\subseteq\Phi_\la\cap\Phi^+$.  Then $R(\lambda)=\langle s_{\alpha} \mid \alpha\in \Phi_\lambda\rangle$ is a Weyl group with simple system $\Delta_{\lambda}$, $C(\lambda)$ is the stabilizer of $\Delta_\la$ in $W(\la)$, and $W(\lambda)=C(\lambda)\ltimes R(\lambda)$ .

We see immediately from the definitions that:

\begin{lemma}
For $\alpha\in\Omega$, $p_{\alpha,\lambda}=p_{\alpha,\lambda^\sigma}$.  Hence, we have $\Phi_{\lambda}=\Phi_{\la^\sigma}$ and $\Delta_{\la}=\Delta_{\la^\sigma}$.  Further, $R(\lambda^\sigma)=R(\lambda)$ and $C(\lambda^\sigma)=C(\lambda)$.
\end{lemma}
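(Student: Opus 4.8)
The plan is to reduce everything to two elementary facts: that the $\sigma$-twist commutes with Harish--Chandra induction, and that character degrees lie in the prime field $\F_0$ and are hence $\sigma$-invariant. Note first that the set $\Omega$, the standard Levi subgroup $L_\alpha$ attached to $\alpha\in\Omega$, and all the ambient data $\Phi,\Phi^+,\Delta, W(L), w_0^L, w_0^\alpha$ are defined purely from the $BN$-pair of $G$ and do not involve $\lambda$; so the only $\lambda$-dependent quantity that must be controlled is $p_{\alpha,\lambda}$.

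First I would fix $\alpha\in\Omega$ and let $M$ be the $\F(P\cap L_\alpha)$-module obtained by inflating a module affording $\la$, so that $\mathfrak{F}_{P\cap L_\alpha}^{L_\alpha}(M)$ affords $R_L^{L_\alpha}(\la)$. Applying \prettyref{lem:sigmacoinduced} inside $L_\alpha$ gives $\mathfrak{F}_{P\cap L_\alpha}^{L_\alpha}(M)^\sigma\cong\mathfrak{F}_{P\cap L_\alpha}^{L_\alpha}(M^\sigma)$, and by \prettyref{lem:sigmamatrixrep2} the module $M^\sigma$ affords $\la^\sigma$ on group elements (which is all that is needed to identify characters of a finite group); hence $R_L^{L_\alpha}(\la)^\sigma=R_L^{L_\alpha}(\la^\sigma)$. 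Since $S\mapsto S^\sigma$ is a bijection on isomorphism classes of simple $\F L_\alpha$-modules that respects direct-sum decompositions, $R_L^{L_\alpha}(\la^\sigma)$ has the same irreducible constituents as $R_L^{L_\alpha}(\la)$ with the same multiset of degrees --- here using $\psi^\sigma(1)=\psi(1)$ for any character $\psi$, because $\psi(1)\in\F_0$ and $\sigma$ fixes $\F_0$. As $p_{\alpha,\la}$ is defined from those degrees, we conclude $p_{\alpha,\la^\sigma}=p_{\alpha,\la}$.

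The remaining assertions are now formal. We already observed $N(L)_\la=N(L)_{\la^\sigma}$, so $W(\la)=W(\la^\sigma)$ as subgroups of $W(L)$; combined with $p_{\alpha,\la}=p_{\alpha,\la^\sigma}$ and the definition $\Phi_\la=\{\alpha\in\Omega\mid s_\alpha\in W(\la),\ p_{\alpha,\la}\neq1\}$, this gives $\Phi_\la=\Phi_{\la^\sigma}$. Since $\Delta_\la$ is the simple system of $\Phi_\la$ contained in $\Phi^+$, it follows that $\Delta_\la=\Delta_{\la^\sigma}$, whence $R(\la)=\langle s_\alpha\mid\alpha\in\Phi_\la\rangle=R(\la^\sigma)$, and $C(\la)$, being the stabilizer of $\Delta_\la$ in $W(\la)$, equals $C(\la^\sigma)$. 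The only step requiring any real care is the identity $R_L^{L_\alpha}(\la)^\sigma=R_L^{L_\alpha}(\la^\sigma)$ --- that is, that the $\sigma$-twist commutes with inflation followed by induction --- which is exactly what the coinduced-module computation of \prettyref{sec:twistcoind} provides; the rest only unwinds the definitions of $\Omega$, $\Phi_\la$, $\Delta_\la$, $R(\la)$, $C(\la)$ from \cite[4.B]{MalleSpathMcKay2}.
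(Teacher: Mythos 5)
Your proposal is correct and follows the same line of reasoning the paper treats as self-evident: the lemma is stated there without proof ("we see immediately from the definitions"), and your write-up simply makes that explicit, the key points being that the twist commutes with inflation-plus-induction (your use of \prettyref{lem:sigmacoinduced} inside $L_\alpha$), that degrees are rational and hence $\sigma$-fixed, and that $N(L)_\lambda=N(L)_{\lambda^\sigma}$ gives $W(\lambda)=W(\lambda^\sigma)$. Nothing in your argument deviates from or adds to the paper's intended justification; it is a faithful unwinding of the definitions of $\Omega$, $p_{\alpha,\lambda}$, $\Phi_\lambda$, $\Delta_\lambda$, $R(\lambda)$, and $C(\lambda)$.
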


Now, for each $\alpha\in\Delta_{\lambda}$, let $\epsilon_{\alpha,\lambda}\in\{\pm1\}$ be as in \cite[(4.3)]{MalleSpathMcKay2}.  Namely, we have

\[B_{s_\alpha,\lambda}^2=\ind(s_\alpha)\cdot \mathrm{id}+\epsilon_{\alpha,\lambda}\frac{p_{\alpha,\lambda}-1}{\sqrt{\ind(s_{\alpha})p_{\alpha,\lambda}}}B_{s_{\alpha},\lambda},\] and similarly
\[B_{s_\alpha,\lambda^\sigma}^2=\ind(s_\alpha)\cdot \mathrm{id}+\epsilon_{\alpha,\lambda^\sigma}\frac{p_{\alpha,\lambda}-1}{\sqrt{\ind(s_{\alpha})p_{\alpha,\lambda}}}B_{s_{\alpha},\lambda^\sigma}.\]

Here, for any $w\in W$, we write $\ind(w):=|U_0\cap U_0^{w_0w}|$ where $U_0$ is the unipotent radical of the subgroup $B$ and $w_0$ is the longest element of $W$.  We remark that since $p_{\alpha,\la}$ and $\ind(w)$ are powers of $p$ (see \cite[Theorem 10.5.5]{carter2}), the square root $\sqrt{\ind(s_{\alpha})p_{\alpha,\lambda}}$ lies in $\F$ by our assumption $\sqrt{p}\in\F$.

 Using this, we may further relate $\epsilon_{\alpha,\lambda}$ to $\epsilon_{\alpha,\lambda^\sigma}$:

\begin{lemma}\label{lem:epsilons}

For each $\alpha\in\Delta_{\lambda}$,  \[\displaystyle{\frac{\epsilon_{\alpha,\lambda}}{\epsilon_{\alpha,\lambda^\sigma}}=\delta_{\la,\sigma}(s_\alpha)\frac{\left(\sqrt{\ind(s_\alpha)p_{\alpha,\lambda}}\right)^\sigma}{\sqrt{\ind(s_\alpha)p_{\alpha,\lambda}}}}.\]
\end{lemma}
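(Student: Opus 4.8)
The plan is to compute $B_{s_\alpha,\lambda}^\sigma{}^2$ in two ways and compare. On the one hand, by \prettyref{lem:iotaBw} we have $B_{s_\alpha,\lambda}^\sigma = \delta_{\lambda,\sigma}(s_\alpha)B_{s_\alpha,\lambda^\sigma}$, so squaring gives
\[
\bigl(B_{s_\alpha,\lambda}^2\bigr)^\sigma = \delta_{\lambda,\sigma}(s_\alpha)^2 B_{s_\alpha,\lambda^\sigma}^2 = B_{s_\alpha,\lambda^\sigma}^2,
\]
since $\delta_{\lambda,\sigma}$ is linear (so $\delta_{\lambda,\sigma}(s_\alpha)^2=\delta_{\lambda,\sigma}(s_\alpha^2)=\delta_{\lambda,\sigma}(1)=1$); here I use that $-^\sigma$ is a ring homomorphism on $\e{\lambda}$, so it respects the square. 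On the other hand, I apply the ring isomorphism $-^\sigma$ to the quadratic relation for $B_{s_\alpha,\lambda}$, using $\sigma$-semilinearity to pull scalars through: since $\ind(s_\alpha)$ is a power of $p$ hence a rational integer fixed by $\sigma$, while $\epsilon_{\alpha,\lambda}\frac{p_{\alpha,\lambda}-1}{\sqrt{\ind(s_\alpha)p_{\alpha,\lambda}}}$ is an $\F$-scalar, I get
\[
\bigl(B_{s_\alpha,\lambda}^2\bigr)^\sigma = \ind(s_\alpha)\cdot\mathrm{id} + \left(\epsilon_{\alpha,\lambda}\frac{p_{\alpha,\lambda}-1}{\sqrt{\ind(s_\alpha)p_{\alpha,\lambda}}}\right)^{\!\sigma} B_{s_\alpha,\lambda}^\sigma.
\]
The one subtlety to get right is the direction of the semilinear twist: the isomorphism $-^\sigma\colon\e{\lambda}\to\e{\lambda^\sigma}$ satisfies $(\beta B)^\sigma = \beta^\sigma B^\sigma$ (it is $\sigma$-semilinear, not $\sigma^{-1}$-semilinear, as recorded after the definition of $\sigma$-twists), so the scalar genuinely gets $\sigma$ applied to it, and $p_{\alpha,\lambda}-1\in\Z$ is fixed.

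Next I substitute $B_{s_\alpha,\lambda}^\sigma=\delta_{\lambda,\sigma}(s_\alpha)B_{s_\alpha,\lambda^\sigma}$ into the right-hand side of the second computation and set the result equal to the quadratic relation for $B_{s_\alpha,\lambda^\sigma}$. Since $\{\,\mathrm{id}, B_{s_\alpha,\lambda^\sigma}\,\}$ is part of a basis of $\e{\lambda^\sigma}$ (the $B_{w,\lambda^\sigma}$ for $w\in W(\lambda^\sigma)=W(\lambda)$, together with $B_{1,\lambda^\sigma}=\mathrm{id}$), the coefficients of $B_{s_\alpha,\lambda^\sigma}$ must match, yielding
\[
\delta_{\lambda,\sigma}(s_\alpha)\left(\epsilon_{\alpha,\lambda}\frac{p_{\alpha,\lambda}-1}{\sqrt{\ind(s_\alpha)p_{\alpha,\lambda}}}\right)^{\!\sigma} = \epsilon_{\alpha,\lambda^\sigma}\frac{p_{\alpha,\lambda}-1}{\sqrt{\ind(s_\alpha)p_{\alpha,\lambda}}}.
\]
Using $\epsilon_{\alpha,\lambda}^\sigma=\epsilon_{\alpha,\lambda}$ (as $\epsilon_{\alpha,\lambda}\in\{\pm1\}\subseteq\Q$), $(p_{\alpha,\lambda}-1)^\sigma=p_{\alpha,\lambda}-1$, and $p_{\alpha,\lambda}\neq1$ so that the factor $p_{\alpha,\lambda}-1$ is invertible and cancels, I solve for the ratio $\epsilon_{\alpha,\lambda}/\epsilon_{\alpha,\lambda^\sigma}$ and obtain exactly
\[
\frac{\epsilon_{\alpha,\lambda}}{\epsilon_{\alpha,\lambda^\sigma}} = \delta_{\lambda,\sigma}(s_\alpha)\,\frac{\bigl(\sqrt{\ind(s_\alpha)p_{\alpha,\lambda}}\bigr)^\sigma}{\sqrt{\ind(s_\alpha)p_{\alpha,\lambda}}},
\]
after rearranging $\bigl(1/\sqrt{\ind(s_\alpha)p_{\alpha,\lambda}}\bigr)^\sigma = 1/\bigl(\sqrt{\ind(s_\alpha)p_{\alpha,\lambda}}\bigr)^\sigma$.

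The only real point requiring care — and the place I expect a careful reader to want detail — is the bookkeeping around semilinearity: making sure that when $-^\sigma$ is applied to the quadratic relation the integer parts ($\ind(s_\alpha)$, $p_{\alpha,\lambda}$, and $\pm1$) are genuinely fixed while only the $\sqrt{\cdot}$ in the denominator is moved by $\sigma$, and that $\delta_{\lambda,\sigma}(s_\alpha)$ enters linearly (not with its $\sigma$-conjugate) via \prettyref{lem:iotaBw}. Everything else — the cancellation of $\delta_{\lambda,\sigma}(s_\alpha)^2=1$ on one side, the cancellation of the nonzero factor $p_{\alpha,\lambda}-1$, and the linear independence of $\mathrm{id}$ and $B_{s_\alpha,\lambda^\sigma}$ in $\e{\lambda^\sigma}$ — is routine given the structure already set up in \cite[Chapter 10]{carter2} and \cite[4.B]{MalleSpathMcKay2}.
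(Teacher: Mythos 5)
Your proposal is correct and follows essentially the same route as the paper: apply the $\sigma$-semilinear ring isomorphism $-^\sigma$ to the quadratic relation for $B_{s_\alpha,\lambda}$, substitute $B_{s_\alpha,\lambda}^\sigma=\delta_{\lambda,\sigma}(s_\alpha)B_{s_\alpha,\lambda^\sigma}$ from \prettyref{lem:iotaBw}, use $\delta_{\lambda,\sigma}(s_\alpha)^2=1$, and compare coefficients of $B_{s_\alpha,\lambda^\sigma}$ against its own quadratic relation. Your extra remarks (linear independence of the basis elements and $p_{\alpha,\lambda}\neq 1$ so the factor $p_{\alpha,\lambda}-1$ cancels) are points the paper leaves implicit, and they are accurate.
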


\begin{proof}
Notice that from \prettyref{lem:iotaBw} and the $\sigma$-semilinearity of $-^\sigma\colon\e{\la}\rightarrow\e{\la^\sigma}$, we see
 \[(B_{s_{\alpha},\lambda}^2)^\sigma=\ind(s_{\alpha})\cdot\mathrm{id}+\epsilon_{\alpha,\lambda}\frac{p_{\alpha,\lambda}-1}{\left(\sqrt{\ind(s_{\alpha})p_{\alpha,\lambda}}\right)^\sigma}B_{s_\alpha,\lambda}^\sigma\]
 
 \[=\ind(s_{\alpha})\cdot\mathrm{id}+\epsilon_{\alpha,\lambda}\frac{p_{\alpha,\lambda}-1}{\left(\sqrt{\ind(s_{\alpha})p_{\alpha,\lambda}}\right)^\sigma}\delta_{\lambda,\sigma}(s_{\alpha})B_{s_\alpha,\lambda^\sigma}\] and this is the same as

\[(\delta_{\lambda,\sigma}(s_{\alpha}))^2B_{s_\alpha,\lambda^\sigma}^2=(\delta_{\lambda,\sigma}(s_{\alpha}))^2\ind(s_\alpha)\cdot \mathrm{id}+(\delta_{\lambda,\sigma}(s_{\alpha}))^2\epsilon_{\alpha,\lambda^\sigma}\frac{p_{\alpha,\lambda}-1}{\sqrt{\ind(s_{\alpha})p_{\alpha,\lambda}}}B_{s_{\alpha},\lambda^\sigma}.\]  
Since $s_{\alpha}^2=1$ and $\delta_{\la,\sigma}$ is linear, this means $\epsilon_{\alpha,\lambda}\frac{p_{\alpha,\lambda}-1}{\left(\sqrt{\ind(s_{\alpha})p_{\alpha,\lambda}}\right)^\sigma}\delta_{\la,\sigma}(s_\alpha)=\epsilon_{\alpha,\lambda^\sigma}\frac{p_{\alpha,\lambda}-1}{\sqrt{\ind(s_{\alpha})p_{\alpha,\lambda}}}$, completing the proof.
\end{proof}


We now wish to determine the image under $\sigma$ of the alternate basis $T_{w,\lambda}$ for $\e{\la}$.  For $\alpha\in\Delta_{\lambda}$, the element $T_{s_\alpha,\lambda}$ is defined by $T_{s_\alpha,\la}:=\epsilon_{\alpha,\lambda}\sqrt{\ind(s_\alpha)p_{\alpha,\lambda}}B_{s_\alpha,\lambda}$.  For $w\in C(\lambda)$, $T_{w,\lambda}:=\sqrt{\ind(w)}B_{w,\lambda}$, and for $w=w_1s_{\alpha_1}\cdots s_{\alpha_r}$ where $w_1\in C(\lambda)$ and $s_{\alpha_1}\cdots s_{\alpha_r}$ is a reduced expression in $R(\lambda)$, $T_{w,\lambda}$ is defined by
\begin{equation}\label{eq:Tw}
T_{w,\lambda}:=T_{w_1,\lambda}T_{s_{\alpha_1},\lambda}\cdots T_{s_{\alpha_r},\lambda}.\end{equation}  We remark that this definition is independent of choice of reduced expression.  (See \cite[Proposition 10.8.2]{carter2}.)

  This yields the following:

\begin{proposition}\label{prop:iotaTw}
Let $w\in W(\lambda)$ have the form $w=w_1w_2$ for $w_1\in C(\lambda)$ and $w_2\in R(\lambda)$.  Then 
\[T_{w,\lambda}^\sigma=\frac{\sqrt{\ind(w_1)}^\sigma}{\sqrt{\ind(w_1)}}\delta_{\lambda,\sigma}(w_1)T_{w,\lambda^\sigma}.\]
\end{proposition}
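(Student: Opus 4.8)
The plan is to reduce the statement for a general $w=w_1w_2$ to the two cases already encoded in Lemma~\ref{lem:iotaBw}, Lemma~\ref{lem:epsilons}, and the multiplicativity \eqref{eq:Tw}, using the $\sigma$-semilinearity of $-^\sigma\colon\e{\la}\to\e{\la^\sigma}$ throughout. First I would handle $w_1\in C(\lambda)$: since $T_{w_1,\lambda}=\sqrt{\ind(w_1)}\,B_{w_1,\lambda}$, semilinearity gives $T_{w_1,\lambda}^\sigma=\left(\sqrt{\ind(w_1)}\right)^\sigma B_{w_1,\lambda}^\sigma$, and then Lemma~\ref{lem:iotaBw} rewrites $B_{w_1,\lambda}^\sigma=\delta_{\lambda,\sigma}(w_1)B_{w_1,\lambda^\sigma}$, while $T_{w_1,\lambda^\sigma}=\sqrt{\ind(w_1)}\,B_{w_1,\lambda^\sigma}$; combining yields exactly the claimed factor $\frac{\sqrt{\ind(w_1)}^\sigma}{\sqrt{\ind(w_1)}}\delta_{\lambda,\sigma}(w_1)$ times $T_{w_1,\lambda^\sigma}$.

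Next I would treat a simple reflection $s_\alpha\in\Delta_\lambda$. By definition $T_{s_\alpha,\lambda}=\epsilon_{\alpha,\lambda}\sqrt{\ind(s_\alpha)p_{\alpha,\lambda}}\,B_{s_\alpha,\lambda}$, so semilinearity gives $T_{s_\alpha,\lambda}^\sigma=\epsilon_{\alpha,\lambda}\left(\sqrt{\ind(s_\alpha)p_{\alpha,\lambda}}\right)^\sigma B_{s_\alpha,\lambda}^\sigma$. Now apply Lemma~\ref{lem:iotaBw} to replace $B_{s_\alpha,\lambda}^\sigma$ by $\delta_{\lambda,\sigma}(s_\alpha)B_{s_\alpha,\lambda^\sigma}$, and apply Lemma~\ref{lem:epsilons} to substitute $\epsilon_{\alpha,\lambda}=\epsilon_{\alpha,\lambda^\sigma}\,\delta_{\lambda,\sigma}(s_\alpha)\frac{\left(\sqrt{\ind(s_\alpha)p_{\alpha,\lambda}}\right)^\sigma}{\sqrt{\ind(s_\alpha)p_{\alpha,\lambda}}}$. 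The point to check carefully is that $\delta_{\lambda,\sigma}(s_\alpha)^2=1$ (since $\delta_{\lambda,\sigma}$ is linear and $s_\alpha^2=1$), so the two copies of $\delta_{\lambda,\sigma}(s_\alpha)$ cancel, and similarly $\left(\sqrt{\ind(s_\alpha)p_{\alpha,\lambda}}\right)^\sigma$ cancels against the denominator, leaving precisely $T_{s_\alpha,\lambda}^\sigma=T_{s_\alpha,\lambda^\sigma}$. In other words, on simple reflections in $R(\lambda)$ the twist $-^\sigma$ acts trivially on the $T$-basis, which is the key simplification that makes the $R(\lambda)$-part of $w$ contribute nothing.

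Then I would assemble the general case. Writing $w=w_1 s_{\alpha_1}\cdots s_{\alpha_r}$ with $w_1\in C(\lambda)$ and the product a reduced expression in $R(\lambda)$, definition \eqref{eq:Tw} gives $T_{w,\lambda}=T_{w_1,\lambda}T_{s_{\alpha_1},\lambda}\cdots T_{s_{\alpha_r},\lambda}$; since $-^\sigma$ is a ring homomorphism, $T_{w,\lambda}^\sigma=T_{w_1,\lambda}^\sigma T_{s_{\alpha_1},\lambda}^\sigma\cdots T_{s_{\alpha_r},\lambda}^\sigma$. By the previous two steps each $T_{s_{\alpha_i},\lambda}^\sigma=T_{s_{\alpha_i},\lambda^\sigma}$ and $T_{w_1,\lambda}^\sigma=\frac{\sqrt{\ind(w_1)}^\sigma}{\sqrt{\ind(w_1)}}\delta_{\lambda,\sigma}(w_1)T_{w_1,\lambda^\sigma}$; multiplying back together and invoking \eqref{eq:Tw} for $\lambda^\sigma$ (noting from the earlier lemma that $C(\lambda^\sigma)=C(\lambda)$, $R(\lambda^\sigma)=R(\lambda)$, $\Delta_{\lambda^\sigma}=\Delta_\lambda$, so the same reduced expression is admissible) gives $T_{w,\lambda}^\sigma=\frac{\sqrt{\ind(w_1)}^\sigma}{\sqrt{\ind(w_1)}}\delta_{\lambda,\sigma}(w_1)T_{w,\lambda^\sigma}$, as desired. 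I expect the main obstacle to be purely bookkeeping rather than conceptual: one must be careful that $-^\sigma$ is a genuine ring homomorphism $\e{\la}\to\e{\la^\sigma}$ under the standing identifications (which is exactly Corollary~\ref{cor:sigmacoinducedend}), that \eqref{eq:Tw} is well-defined independently of the reduced expression on both the $\lambda$ and $\lambda^\sigma$ sides, and that the scalar $\sqrt{\ind(w)}$ appearing in $T_{w_1,\lambda}$ for $w_1\in C(\lambda)$ matches $\sqrt{\ind(w_1)}$ and not $\sqrt{\ind(w)}$ — i.e.\ that only the $C(\lambda)$-part of $w$ carries the index and sign corrections, which is precisely what the simple-reflection computation guarantees.
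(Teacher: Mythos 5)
Your proposal is correct and follows essentially the same route as the paper: handle $T_{w_1,\lambda}$ for $w_1\in C(\lambda)$ via Lemma \ref{lem:iotaBw}, show $T_{s_\alpha,\lambda}^\sigma=T_{s_\alpha,\lambda^\sigma}$ for $\alpha\in\Delta_\lambda$ by combining Lemma \ref{lem:iotaBw} with Lemma \ref{lem:epsilons}, and then assemble the general case from \eqref{eq:Tw} using that $-^\sigma$ is a ring homomorphism. The cancellation bookkeeping you flag (using $\delta_{\lambda,\sigma}(s_\alpha)^2=1$ and $\bigl(\sqrt{\ind(s_\alpha)p_{\alpha,\lambda}}\bigr)^\sigma=\pm\sqrt{\ind(s_\alpha)p_{\alpha,\lambda}}$) is exactly what the paper's computation amounts to.
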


\begin{proof}

We may write $w_2\in R(\lambda)$ as a reduced expression $w_2=s_{\alpha_1}\cdots s_{\alpha_r}$ as above.  First note that by \prettyref{lem:iotaBw} and $\sigma$-semilinearity, 

\[T_{w_1,\lambda}^\sigma = (\sqrt{\ind(w_1)}B_{w_1,\lambda})^\sigma = \sqrt{\ind(w_1)}^\sigma B_{w_1,\lambda}^\sigma = \sqrt{\ind(w_1)}^\sigma\delta_{\lambda,\sigma}(w_1)B_{w_1,\lambda^\sigma},\] and for each $i$,

\[T_{s_{\alpha_i},\lambda}^\sigma= \left(\epsilon_{\alpha_i,\lambda}\sqrt{\ind(s_{\alpha_i})p_{\alpha_i,\lambda}}B_{s_{\alpha_i},\lambda}\right)^\sigma =\epsilon_{\alpha_i,\lambda}\left(\sqrt{\ind(s_{\alpha_i})p_{\alpha_i,\lambda}}\right)^\sigma  B_{s_{\alpha_i},\lambda}^\sigma \]\[=\epsilon_{\alpha_i,\lambda}\left(\sqrt{\ind(s_{\alpha_i})p_{\alpha_i,\lambda}}\right)^\sigma \delta_{\lambda,\sigma}(s_{\alpha_i})B_{s_{\alpha_i},\lambda^\sigma} . \]  
But notice $\delta_{\lambda,\sigma}(w_1)T_{w_1,\lambda^\sigma}=\delta_{\lambda,\sigma}(w_1)\sqrt{\ind(w_1)}B_{w_1,\lambda^\sigma},$ so that \[T_{w_1,\lambda}^\sigma =  \frac{\sqrt{\ind(w_1)}^\sigma}{\sqrt{\ind(w_1)}}\delta_{\lambda,\sigma}(w_1)T_{w_1,\lambda^\sigma}. \]
Moreover,
$\delta_{\lambda,\sigma}(s_{\alpha_i})T_{s_{\alpha_i},\lambda^\sigma}=\epsilon_{\alpha_i,\lambda^\sigma}\sqrt{\ind(s_{\alpha_i})p_{\alpha_i,\lambda}}\delta_{\lambda,\sigma}(s_{\alpha_i})B_{s_{\alpha_i},\lambda^\sigma},$ and hence
\[T_{s_{\alpha_i},\lambda}^\sigma=\frac{\epsilon_{\alpha_i,\lambda}\left(\sqrt{\ind(s_{\alpha_i})p_{\alpha_i,\lambda}}\right)^\sigma}{\epsilon_{\alpha_i,\lambda^\sigma}\sqrt{\ind(s_{\alpha_i})p_{\alpha_i,\lambda}}}\delta_{\lambda,\sigma}(s_{\alpha_i})T_{s_{\alpha_i},\lambda^\sigma}=T_{s_{\alpha_i},\lambda^\sigma},\] where the last equality is by \prettyref{lem:epsilons}. Finally, since $\sigma$ is an isomorphism of rings, the statement follows from \eqref{eq:Tw}. 
\end{proof}

For $w=w_1w_2\in W(\lambda)$ with $w_1\in C(\lambda)$ and $w_2\in R(\lambda)$, we define $r_\sigma(w):=\frac{\sqrt{\ind(w_1)}^\sigma}{\sqrt{\ind(w_1)}}$.  As may be inferred from \prettyref{prop:iotaTw}, the term $r_\sigma(w)$ will play an important role.   Notice that $r_\sigma(w)\in\{\pm1\}$.  Indeed, for $z\in\Q$, $\sqrt{z}^\sigma$ must be a solution to the polynomial $t^2-z=0$, and hence $\sqrt{z}^\sigma\in\{\pm\sqrt{z}\}$. 

In some cases, it is clear that $r_\sigma\colon W(\lambda)\rightarrow\{\pm1\}$ defines a homomorphism, and hence a linear character of $W(\lambda)$.  When this is the case, note that $r_\sigma(w)=r_\sigma(w)^{-1}=r_\sigma({w^{-1}})$ and that $r_\sigma$ is simply a character of $C(\la)\cong W(\la)/R(\la)$ inflated to $W(\la)$.  In particular, this holds in the case that $G$ is a group of Lie type, by \cite[Section 2.9]{carter2} (see \prettyref{lem:rw} below and the discussion preceding it).

We remark that in particular, if $R(\lambda)\leq \ker(\delta_{\lambda,\sigma})$, then 
$T_{w,\la}^\sigma=r_\sigma(w)\delta_{\la,\sigma}(w)T_{w,\la^\sigma}.$ 
However, more generally, we may inflate the linear character $\delta_{\la,\sigma}|_{C(\la)}$ of $C(\la)\cong W(\la)/R(\la)$ to another linear character $\delta_{\la,\sigma}'$ of $W(\la)$ that is trivial on $R(\la)$.   This gives the following corollary:

\begin{corollary}\label{cor:iotaTw}
Let $w\in W(\lambda)$.  Then 
\[T_{w,\lambda}^\sigma=r_\sigma(w)\delta'_{\lambda,\sigma}(w)T_{w,\lambda^\sigma}.\]
\end{corollary}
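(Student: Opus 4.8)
The plan is to obtain this directly from \prettyref{prop:iotaTw} by unwinding the definitions of $r_\sigma$ and $\delta'_{\lambda,\sigma}$, with essentially no new computation. Fix $w\in W(\lambda)$ and use the semidirect product decomposition $W(\lambda)=C(\lambda)\ltimes R(\lambda)$ to write $w=w_1w_2$ uniquely with $w_1\in C(\lambda)$ and $w_2\in R(\lambda)$. \prettyref{prop:iotaTw} then reads
\[T_{w,\lambda}^\sigma=\frac{\sqrt{\ind(w_1)}^\sigma}{\sqrt{\ind(w_1)}}\,\delta_{\lambda,\sigma}(w_1)\,T_{w,\lambda^\sigma},\]
and by the very definition of $r_\sigma$ the scalar $\frac{\sqrt{\ind(w_1)}^\sigma}{\sqrt{\ind(w_1)}}$ is exactly $r_\sigma(w)$. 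So the only point that remains is to identify the factor $\delta_{\lambda,\sigma}(w_1)$ with $\delta'_{\lambda,\sigma}(w)$.

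For that, recall that $\delta'_{\lambda,\sigma}$ was defined as the inflation to $W(\lambda)$, through the canonical isomorphism $W(\lambda)/R(\lambda)\cong C(\lambda)$ arising from the semidirect decomposition, of the restriction $\delta_{\lambda,\sigma}|_{C(\lambda)}$. Since $w_2\in R(\lambda)$ and $R(\lambda)$ is normal in $W(\lambda)$, the coset $wR(\lambda)=w_1w_2R(\lambda)=w_1R(\lambda)$ corresponds to $w_1$ under $W(\lambda)/R(\lambda)\cong C(\lambda)$; hence $\delta'_{\lambda,\sigma}(w)=\bigl(\delta_{\lambda,\sigma}|_{C(\lambda)}\bigr)(w_1)=\delta_{\lambda,\sigma}(w_1)$. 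Substituting this into the displayed formula yields $T_{w,\lambda}^\sigma=r_\sigma(w)\,\delta'_{\lambda,\sigma}(w)\,T_{w,\lambda^\sigma}$, as claimed.

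There is no substantive obstacle here: the corollary is a repackaging of \prettyref{prop:iotaTw} in which the factor $\delta_{\lambda,\sigma}(w_1)$, which a priori depends only on the $C(\lambda)$-component of $w$, is re-expressed as the value at $w$ of a genuine linear character of $W(\lambda)$ (the one obtained by forcing $R(\lambda)$ into the kernel); this is the generalization of the preceding remark to the case when $R(\lambda)\not\leq\ker(\delta_{\lambda,\sigma})$. The only thing to be careful about is to keep the identification $W(\lambda)/R(\lambda)\cong C(\lambda)$ fixed throughout and to note that both $r_\sigma(w)$ and $\delta'_{\lambda,\sigma}(w)$ depend on $w$ only through $w_1$, which matches the shape of \prettyref{prop:iotaTw}.
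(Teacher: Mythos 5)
Your proposal is correct and is essentially the paper's own argument: the corollary is stated as an immediate consequence of \prettyref{prop:iotaTw}, with $r_\sigma(w)$ equal by definition to $\sqrt{\ind(w_1)}^\sigma/\sqrt{\ind(w_1)}$ and $\delta'_{\lambda,\sigma}(w)=\delta_{\lambda,\sigma}(w_1)$ because $\delta'_{\lambda,\sigma}$ is the inflation of $\delta_{\lambda,\sigma}|_{C(\lambda)}$ through $W(\lambda)/R(\lambda)\cong C(\lambda)$ and $wR(\lambda)=w_1R(\lambda)$. Your careful identification of the coset of $w$ with its $C(\lambda)$-component is exactly the (implicit) content of the paper's remark preceding the corollary.
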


%
%
%

\subsection{Action on Modules}

We now turn our attention to $\e{\la}$-modules and prove an analogue to \cite[Proposition 4.5]{MalleSpathMcKay2}.   


\begin{proposition}\label{prop:moduleact}
Let $\eta$ be an irreducible character of $\e{\la}$.  Then the irreducible character $\eta^\sigma$ of $\e{\la^\sigma}$ satisfies  

\[\eta^\sigma(T_{w,\lambda^\sigma})={r}_{\sigma}(w)\delta'_{\lambda,\sigma}(w^{-1})\eta(T_{w,\la})^\sigma\]
for every $w\in W(\lambda)$.
\end{proposition}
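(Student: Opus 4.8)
The plan is to relate the character $\eta^\sigma$ of $\e{\la^\sigma}$ to the character $\eta$ of $\e{\la}$ by exploiting the $\sigma$-semilinear isomorphism $-^\sigma\colon\e{\la}\to\e{\la^\sigma}$ together with the transformation law for the basis elements $T_{w,\la}$ established in \prettyref{cor:iotaTw}. Concretely, the module $M_\eta$ affording $\eta$ over $\e{\la}$ has a $\sigma$-twist $M_\eta^\sigma$, which is an $\e{\la}^\sigma$-module, and under the identification $\e{\la}^\sigma\cong\e{\la^\sigma}$ from \prettyref{cor:sigmacoinducedend} it becomes an $\e{\la^\sigma}$-module; by definition $\eta^\sigma$ is the character it affords. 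So the first step is simply to write down what \prettyref{lem:sigmamatrixrep} (or rather its statement for the trace) gives: fixing a basis $\mathfrak B$ of $M_\eta$ and the corresponding basis $\mathfrak B^\sigma$ of $M_\eta^\sigma$, we have $[\rho_\eta^\sigma(a^\sigma)]_{\mathfrak B^\sigma}=[\rho_\eta(a)]_{\mathfrak B}^\sigma$ and hence $\eta^\sigma(a^\sigma)=\eta(a)^\sigma$ for every $a\in\e{\la}$.

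Second, I would specialize $a$ to the basis element $T_{w,\la}$. This gives $\eta^\sigma(T_{w,\la}^\sigma)=\eta(T_{w,\la})^\sigma$. Now invoke \prettyref{cor:iotaTw}, which says $T_{w,\la}^\sigma=r_\sigma(w)\,\delta'_{\la,\sigma}(w)\,T_{w,\la^\sigma}$. Since $r_\sigma(w),\delta'_{\la,\sigma}(w)\in\{\pm1\}$ are scalars in the prime field (in fact rational), and $\eta^\sigma$ is $\F$-linear as a character of $\e{\la^\sigma}$, we can pull these scalars out: $\eta^\sigma(T_{w,\la}^\sigma)=r_\sigma(w)\,\delta'_{\la,\sigma}(w)\,\eta^\sigma(T_{w,\la^\sigma})$. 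Combining the two displays yields
\[
r_\sigma(w)\,\delta'_{\la,\sigma}(w)\,\eta^\sigma(T_{w,\la^\sigma})=\eta(T_{w,\la})^\sigma.
\]
Third, I would solve for $\eta^\sigma(T_{w,\la^\sigma})$, using that $r_\sigma(w)^{-1}=r_\sigma(w)$ and $\delta'_{\la,\sigma}(w)^{-1}=\delta'_{\la,\sigma}(w^{-1})$ (the latter because $\delta'_{\la,\sigma}$ is a linear character), to arrive at $\eta^\sigma(T_{w,\la^\sigma})=r_\sigma(w)\,\delta'_{\la,\sigma}(w^{-1})\,\eta(T_{w,\la})^\sigma$, which is exactly the claimed formula. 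One should be slightly careful that $r_\sigma(w)$ really is $\pm1$ and fixed by the relevant arithmetic — this is already noted right after the definition of $r_\sigma$ — and that $\delta'_{\la,\sigma}$ takes values that are roots of unity fixed appropriately; since both are $\pm1$-valued and lie in $\Q$, moving them across $\sigma$ is harmless.

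The main obstacle, such as it is, is bookkeeping rather than mathematics: one has to be scrupulous about which identifications are in force (the $\varphi$ and $\iota$ of \prettyref{lem:sigmacoinduced} and \prettyref{cor:sigmacoinducedend} are being suppressed), and about the distinction between $\eta^\sigma$ as "the character of $\e{\la}^\sigma$ afforded by $M_\eta^\sigma$" versus "the character of $\e{\la^\sigma}$" obtained by transport of structure. The cleanest route is to reduce everything to the trace identity $\eta^\sigma(a^\sigma)=\eta(a)^\sigma$ of \prettyref{lem:sigmamatrixrep}, applied with $a=T_{w,\la}$, and then let \prettyref{cor:iotaTw} do the rest; there is no genuine analytic or combinatorial difficulty. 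If one wants to be fully parallel to \cite[Proposition 4.5]{MalleSpathMcKay2}, one can alternatively phrase it at the level of matrix representations: $[\rho_\eta^\sigma(T_{w,\la^\sigma})]_{\mathfrak B^\sigma}=r_\sigma(w)\delta'_{\la,\sigma}(w^{-1})\,[\rho_\eta(T_{w,\la})]_{\mathfrak B}^\sigma$, and then take traces — but the trace-only version above suffices for the stated claim.
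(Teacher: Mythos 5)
Your argument is correct and is essentially the paper's own proof: apply the trace identity $\eta^\sigma(a^\sigma)=\eta(a)^\sigma$ from \prettyref{lem:sigmamatrixrep} with $a=T_{w,\la}$, rewrite $T_{w,\la}^\sigma$ via \prettyref{cor:iotaTw}, pull the scalar out by $\F$-linearity, and solve using $r_\sigma(w)^{-1}=r_\sigma(w)$ and $\delta'_{\la,\sigma}(w)^{-1}=\delta'_{\la,\sigma}(w^{-1})$. The only small inaccuracy is your claim that $\delta'_{\la,\sigma}(w)\in\{\pm1\}$ (in general it is merely a root of unity, since $C(\la)$ need not be a $2$-group), but nothing in your argument actually uses this: $\F$-linearity of $\eta^\sigma$ together with the linear-character identity is all that is needed, exactly as in the paper.
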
  
\begin{proof}
From \prettyref{lem:sigmamatrixrep} and \prettyref{cor:iotaTw}, we have
 \[\eta(T_{w,\la})^\sigma=\eta^\sigma(T_{w,\la}^\sigma)=\eta^\sigma(r_\sigma(w)\delta'_{\la,\sigma}(w)T_{w,\la^{\sigma}}).\]  Since $r_\sigma(w)\in\{\pm1\}$ and $\delta'_{\la,\sigma}$ is a linear character, this yields the statement.
 \end{proof}

\subsection{The Generic Algebra}

Here we discuss and set the notation for the generic algebras and specializations that yield the bijection $\mathfrak{f}\colon\irr(\e{\la})\rightarrow\irr(W(\la))$ in Howlett--Lehrer theory.  Like before, we follow largely the notation and discussion in \cite[4.D]{MalleSpathMcKay2}.  Let $\textbf{u}=(u_\alpha\colon\alpha\in\Delta_\lambda)$ be indeterminates such that $u_\alpha=u_\beta$ if and only if $\alpha$ is $W(\lambda)$-conjugate to $\beta$.  Let $A_0:=\C[\bf{u},\bf{u}^{-1}]$ and let $K$ be an algebraic closure of the quotient field of $A_0$ and $A$ the integral closure of $A_0$ in $K$.  We define the generic algebra $\mathcal{H}$ to be a free $A$-module with basis $\{a_w\colon w\in W(\lambda)\}$ equipped with the $A$-bilinear associative multiplication satisfying the properties as in \cite[5.D]{MalleSpathMcKay2}.  

We denote by $\mathcal{H}^K$ the $K$-algebra $K\otimes_A \mathcal{H}$, and for any ring homomorphism $f\colon A\rightarrow\C$, we obtain the $\C$-algebra $\mathcal{H}^f:=\C\otimes_A\mathcal{H}$ with basis $\{1\otimes a_w\colon w\in W(\lambda)\}$.  By \cite[Proposition 4.7]{howlettlehrer83}, when $\mathcal{H}^f$ is semisimple, this yields a bijection $\eta\mapsto \eta^f$ between $K$-characters of simple $\mathcal{H}^K$-modules and characters of simple $\mathcal{H}^f$-modules, where $\eta^f(1\otimes a_w)=f(\eta(a_w))$. 

In particular, the morphisms $f$ and $g \colon A\rightarrow \C$ will denote extensions of $f_0, g_0\colon A_0\rightarrow\C$ given by $f_0(u_\alpha)=p_{\alpha,\lambda}$ and $g_0(u_\alpha)=1$, respectively.  By  \cite[Lemma 4.2]{howlettlehrer83}, these extensions exist and yield isomorphisms $\mathcal{H}^f\cong \End_G(\fl\la)$ and $\mathcal{H}^g\cong \C W(\lambda)$ via $1\otimes a_w\mapsto T_{w,\lambda}$ and $1\otimes a_w\mapsto w,$ respectively.

\subsubsection{Further Remarks on the Generic Algebra in the Case of the Principal Series}\label{sec:genericsubalg}

For this section only, we assume that $L=T$ , so that $\lambda\in\irr(T)$ is a linear character and we are in the case of the principal series.  We fix here some notation for later use.

Let $\mathcal{H}_0$ denote the subalgebra of $\mathcal{H}$ generated by $\{a_w\mid w\in R(\lambda)\}$.  Then $\mathcal{H}_0$ is a generic algebra corresponding to the Coxeter group $R(\lambda)$.  In later sections, we will be interested in understanding the analogy between the Clifford theory from $R(\lambda)$ to $W(\lambda)$ and the characters of $\mathcal{H}_0$ in relation to $\mathcal{H}$.  Here we record some of the essentials.

First, by \cite[Theorem 3.7]{HowlettKilmoyer}, any irreducible character $\psi$ of $\mathcal{H}_0^K$ extends to an irreducible character $\tau$ of $C(\lambda)_{\psi}\mathcal{H}_0^K$, where $C(\lambda)_{\psi}\mathcal{H}_0^K$ is the tensor product $KC(\lambda)_\psi\otimes\mathcal{H}_0^K$ as defined in \cite{HowlettKilmoyer}.  Further, by \cite[(3.11) and Lemma 3.12]{HowlettKilmoyer}, every irreducible character of $\mathcal{H}^K$ is of the form $\tau^{\mathcal{H}}$, where $\tau$ is an extension of some $\psi\in\irr(\mathcal{H}_0^K)$ to $C(\lambda)_{\psi}\mathcal{H}_0^K$ and 
\[\tau^\mathcal{H}(a_w):=|C(\lambda)_\psi|^{-1}\sum \tau(a_{cwc^{-1}})\] where the sum is over $c\in C(\lambda)$ satisfying $cwc^{-1}\in C(\lambda)_\psi R(\lambda)$.

\subsection{Alternate Twists and Action on the Harish-Chandra Series}

Recall that the groups $W(\lambda)$ and $W(\la^\sigma)$ are the same, as $N(L)_\la=N(L)_{\la^\sigma}$.  We also see that $\e{\la}$ and $\e{\la^\sigma}$ are isomorphic as vector spaces, simply by mapping each basis element $T_{w,\la}$ to the corresponding basis element $T_{w,\la^\sigma}$ and extending linearly.  However, recalling that the structure constants for $\e{\la}$ and $\e{\la^\sigma}$ are the same (depending only on the numbers $p_{\alpha,\la}=p_{\alpha,\la^\sigma}$), we see that in fact, this yields an isomorphism of $\F$-algebras.
\begin{lemma}
The map $\e{\la}\rightarrow \e{\la^\sigma}$, defined by $T_{w,\la}\mapsto T_{w,\la^\sigma}$ and extending linearly, is an isomorphism of $\F$-algebras.
\end{lemma}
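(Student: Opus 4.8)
The plan is to reduce the statement to the observation that all the data entering the multiplication table of $\e{\la}$ in the basis $(T_{w,\la})_{w\in W(\la)}$ is insensitive to replacing $\la$ by $\la^\sigma$. First I would record that the prescribed map is automatically an $\F$-linear bijection, since it carries the $\F$-basis $\{T_{w,\la}\}$ of $\e{\la}$ onto the $\F$-basis $\{T_{w,\la^\sigma}\}$ of $\e{\la^\sigma}$ (using $W(\la)=W(\la^\sigma)$); the entire content is multiplicativity. Concretely, writing $T_{w,\la}T_{w',\la}=\sum_{v\in W(\la)}c^{v}_{w,w'}T_{v,\la}$ with $c^{v}_{w,w'}\in\F$, I must show that the same scalars $c^{v}_{w,w'}$ express $T_{w,\la^\sigma}T_{w',\la^\sigma}$ in terms of the $T_{v,\la^\sigma}$. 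I would also flag that this map should not be confused with the $\sigma$-semilinear map $-^\sigma\colon\e{\la}\to\e{\la^\sigma}$ from \prettyref{sec:gal twist}: by \prettyref{cor:iotaTw} the latter sends $T_{w,\la}$ to $r_\sigma(w)\delta'_{\la,\sigma}(w)T_{w,\la^\sigma}$, whereas the map here is genuinely $\F$-linear and carries $T_{w,\la}$ to $T_{w,\la^\sigma}$ with no twist.

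The cleanest route to the equality of structure constants, and the one I would use, is via the generic algebra $\mathcal{H}$ introduced above. Recall (from the lemma above stating $p_{\alpha,\la}=p_{\alpha,\la^\sigma}$) that $W(\la)=W(\la^\sigma)$, $\Delta_\la=\Delta_{\la^\sigma}$, $R(\la)=R(\la^\sigma)$, $C(\la)=C(\la^\sigma)$, and $p_{\alpha,\la}=p_{\alpha,\la^\sigma}$; moreover the $W(\la)$-conjugacy relation on $\Delta_\la$ is unchanged, so the indeterminates $\textbf{u}$, the rings $A_0\subseteq A$, and the generic algebra $\mathcal{H}$ are literally the same objects for $\la$ and for $\la^\sigma$. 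The specialization $f\colon A\to\C$ with $f_0(u_\alpha)=p_{\alpha,\la}$ is then also the same, and by \cite[Lemma 4.2]{howlettlehrer83} it furnishes $\F$-algebra isomorphisms $\mathcal{H}^f\to\e{\la}$ and $\mathcal{H}^f\to\e{\la^\sigma}$ sending $1\otimes a_w$ to $T_{w,\la}$ and to $T_{w,\la^\sigma}$ respectively. Composing the inverse of the first with the second yields precisely the map in the statement, now exhibited as an isomorphism of $\F$-algebras.

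If instead one prefers to argue by hand, the structure constants can be read off the $T$-basis relations in \cite[Chapter 10]{carter2}: the quadratic relation for $T_{s_\alpha,\la}$ ($\alpha\in\Delta_\la$) involves only $\ind(s_\alpha)$ and $p_{\alpha,\la}$ --- the sign $\epsilon_{\alpha,\la}$, which by \prettyref{lem:epsilons} need not equal $\epsilon_{\alpha,\la^\sigma}$, enters only as $\epsilon_{\alpha,\la}^2=1$ and therefore cancels, precisely because $T_{s_\alpha,\la}$ was defined so as to absorb it; the braid relations inside $R(\la)$ and the conjugation relations $T_{w,\la}T_{s_\alpha,\la}T_{w,\la}^{-1}=T_{s_{w(\alpha)},\la}$ for $w\in C(\la)$ use only the Coxeter/permutation data of $\big(W(\la),R(\la),C(\la),\Delta_\la\big)$; and any $2$-cocycle lurking on the $C(\la)$-part is trivial by \cite{geck93}. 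All of these coincide for $\la$ and $\la^\sigma$, which gives the claim.

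The main obstacle --- really the only subtle point --- is to be certain that none of the $\la$-dependent auxiliary choices made along the way, namely the signs $\epsilon_{\alpha,\la}$, the extension $\wt\rho$ of $\rho$ affording $\Lambda_\la$, and the extension map $\Lambda$ itself, secretly enter the structure constants of the $T$-basis. This is exactly why I favor the generic-algebra formulation: there the multiplication of $\mathcal{H}$ is defined over $A$ prior to any specialization and depends only on $\big(W(\la),\Delta_\la,C(\la),\textbf{u}\big)$, so the issue does not arise, and one is left only with checking --- which is immediate from the lemmas already proved --- that these ingredients are $\sigma$-invariant.
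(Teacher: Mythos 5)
Your proposal is correct. The paper gives no separate proof: it simply observes, in the sentence preceding the lemma, that the structure constants of $\e{\la}$ and $\e{\la^\sigma}$ with respect to the bases $\{T_{w,\la}\}$ and $\{T_{w,\la^\sigma}\}$ coincide, since they depend only on the data $W(\la)=W(\la^\sigma)$, $\Delta_\la=\Delta_{\la^\sigma}$, $p_{\alpha,\la}=p_{\alpha,\la^\sigma}$ and the index function $\ind$ --- which is exactly your second, ``by hand'' argument, including the key points that the signs $\epsilon_{\alpha,\la}$ are absorbed into the definition of $T_{s_\alpha,\la}$ and that the cocycle on the $C(\la)$-part is trivialized by the fixed equivariant extension map. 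Your preferred route through the generic algebra is a clean formalization of the same fact using machinery the paper has already introduced: since $\mathcal{H}$ and the specialization $f$ are literally the same for $\la$ and $\la^\sigma$, composing the two isomorphisms $1\otimes a_w\mapsto T_{w,\la}$ and $1\otimes a_w\mapsto T_{w,\la^\sigma}$ from \cite[Lemma 4.2]{howlettlehrer83} yields the map in question, which buys you that no $\la$-dependent auxiliary choice can enter the multiplication. The one small point to add on that route is that these specialized isomorphisms are stated over $\C$, whereas the lemma concerns $\F$-algebras; this is harmless because the structure constants in the $T$-basis are rational (a fact the paper itself invokes immediately afterwards to define $\mathfrak{X}^{(\sigma)}$), so the resulting $\C$-algebra isomorphism carries the $\F$-span of $\{T_{w,\la}\}$ onto the $\F$-span of $\{T_{w,\la^\sigma}\}$ and restricts to the desired isomorphism of $\F$-algebras.
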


With this in place, given a matrix representation $\mathfrak{X}\colon \e{\la}\rightarrow \mathrm{Mat}_d(\F)$ for $\e{\la}$, we immediately obtain a corresponding representation \[\overline{\mathfrak{X}}\colon\e{\la^\sigma}\rightarrow \mathrm{Mat}_d(\F)\] via $\overline{\mathfrak{X}}(T_{w,\la^\sigma})=\mathfrak{X}(T_{w,\la})$.  Further, since the structure constants are rational, we obtain a representation
\[\mathfrak{X}^{(\sigma)}\colon\e{\la}\rightarrow \mathrm{Mat}_d(\F)\] defined by $\mathfrak{X}^{(\sigma)}(T_{w,\la})=\mathfrak{X}(T_{w,\la})^\sigma$, and extending linearly.  Together, this also yields a representation $\overline{\mathfrak{X}}^{(\sigma)}$ of $\e{\la^\sigma}$ satisfying that $\overline{\mathfrak{X}}^{(\sigma)}(T_{w,\la^\sigma})=\mathfrak{X}(T_{w,\la})^\sigma$.

If $\eta$ is a character of $\e{\la}$ afforded by $\mathfrak{X}$, we denote by $\overline{\eta}, \eta^{(\sigma)},$ and $\overline{\eta}^{(\sigma)}$ the corresponding characters afforded by $\overline{\mathfrak{X}}, \mathfrak{X}^{(\sigma)},$ and $\overline{\mathfrak{X}}^{(\sigma)}$, respectively.  Note then that by \prettyref{prop:moduleact}, 
\[\overline{\eta}^{(\sigma)}(T_{w,\la^\sigma}) = \eta(T_{w,\la})^\sigma={r}_{\sigma}(w)\delta'_{\lambda,\sigma}(w)\eta^\sigma(T_{w,\la^\sigma}).\]

Now let $\mathfrak{f}\colon\irr(\e{\la})\rightarrow\irr(W(\la))$ denote the bijection induced by the standard specializations of the generic algebra discussed above.  Taking into consideration \prettyref{prop:moduleact}, we are interested in how this bijection behaves under the action of $\sigma$ on the values of $\eta\in\irr(\e{\la})$.  It is clear that this bijection does not preserve fields of values, for example from the fact that the field of values for a Weyl group of type $E_8$ is $\Q$ but for the Hecke algebra $\End_G(1_B^G)$ is $\Q(\sqrt{q})$.  

Hence we define a non-standard bijection $\irr(W(\lambda))\rightarrow\irr(W(\la^\sigma))$ induced by $\sigma$ as follows.  We denote by $\gamma^{(\sigma)}$ the irreducible character of $W(\lambda^\sigma)$ such that $\gamma^{(\sigma)}=\mathfrak{f}\left(\overline{\eta}^{(\sigma)}\right)$, where $\eta\in\irr(\e{\la})$ is the character such that $\gamma=\mathfrak{f}(\eta)$.  That is, this new action of $\sigma$ on $\irr(W(\lambda))$ is such that 
\[\left(\mathfrak{f}(\eta)\right)^{(\sigma)}=\mathfrak{f}(\overline{\eta}^{(\sigma)}).\]  

\prettyref{prop:moduleact}, together with the above discussion, yields a description of how the Galois group acts on constituents of $R_L^G(\lambda)$, which is the Galois action analogue to \cite[Theorem 4.6]{MalleSpathMcKay2}.  (Note that since $\Q\subseteq\F\subseteq\C$ is a splitting field, the complex characters of $W(\lambda), \End_{\C G}(\fl{\la}\otimes_{\F}\C),$ and $G$ can be considered naturally as $\F$-characters of $W(\lambda), \End_{\F G}(\fl{\la}),$ and $G$.)  We write $R_L^G(\lambda)_\gamma$ to denote the constituent of $R_L^G(\lambda)$ afforded by a module $S$ such that $\Hom_{\F G}(\mathfrak{F}(\la), S)$ affords $\eta$ and $\mathfrak{f}(\eta)=\gamma$.


\begin{theorem}\label{thm:GaloisAct}
Let $\sigma\in\aut(\F)$ and let $\gamma\in \irr(W(\la))$.  Then

\[\left(R_L^G(\la)_\gamma\right)^\sigma=R_L^G(\lambda^\sigma)_{\gamma'},\] where $\gamma'\in\irr(W(\la))=\irr(W(\la^\sigma))$ is defined by $\gamma'(w)=r_\sigma(w)\delta'_{\la,\sigma}(w^{-1})\gamma^{(\sigma)}(w)$ for each $w\in W(\lambda)$.
\end{theorem}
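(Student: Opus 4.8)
The plan is to trace how the $\sigma$-twist of the $\F G$-module affording $R_L^G(\la)_\gamma$ passes through the chain of identifications established in the preceding subsections, and to read off the new label in $\irr(W(\la^\sigma))=\irr(W(\la))$. Concretely, let $S$ be a simple $\F G$-submodule of $\fl{\la}$ affording $R_L^G(\la)_\gamma$, so that $\mathscr{H}_\la(S):=\Hom_{\F G}(\fl{\la},S)$ affords the irreducible character $\eta\in\irr(\e{\la})$ with $\mathfrak{f}(\eta)=\gamma$. By \prettyref{prop:sigmasimplebij}, $\mathscr{H}_\la(S)^\sigma\cong\mathscr{H}_{\la^\sigma}(S^\sigma)$ as $\mathcal{H}_G(P,N^\sigma)$-modules, and under the identification $\fl{\la}^\sigma\cong\fl{\la^\sigma}$ from \prettyref{lem:sigmacoinduced}, $S^\sigma$ is the simple $\F G$-submodule of $\fl{\la^\sigma}$ affording $\left(R_L^G(\la)_\gamma\right)^\sigma$. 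Thus $\left(R_L^G(\la)_\gamma\right)^\sigma=R_L^G(\la^\sigma)_{\gamma'}$ where $\gamma'=\mathfrak{f}(\eta')$ and $\eta'\in\irr(\e{\la^\sigma})$ is the character afforded by $\mathscr{H}_{\la^\sigma}(S^\sigma)\cong\mathscr{H}_\la(S)^\sigma$; that is, $\eta'=\eta^\sigma$ in the sense of \prettyref{sec:gal twist}.

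Next I would compute $\eta^\sigma$ on the basis elements $T_{w,\la^\sigma}$. This is exactly the content of \prettyref{prop:moduleact}: $\eta^\sigma(T_{w,\la^\sigma})=r_\sigma(w)\delta'_{\la,\sigma}(w^{-1})\eta(T_{w,\la})^\sigma$ for all $w\in W(\la)$. Translating through the specialization isomorphism $\mathcal{H}^f\cong\e{\la}$ (resp. $\mathcal{H}^f\cong\e{\la^\sigma}$) sending $1\otimes a_w\mapsto T_{w,\la}$ (resp. $T_{w,\la^\sigma}$) and then through $\mathcal{H}^g\cong\C W(\la)$ sending $1\otimes a_w\mapsto w$, the bijection $\mathfrak{f}$ is characterized by matching the labels of simple modules of $\mathcal{H}^K$ under the two semisimple specializations $f$ and $g$. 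Since the generic algebra $\mathcal{H}$ has structure constants in $\C[\bf{u},\bf{u}^{-1}]$ — in fact in $\Q[\bf{u},\bf{u}^{-1}]$ for the relevant $W(\la)$ — and $f_0,g_0$ take $u_\alpha$ to the rational values $p_{\alpha,\la}$ and $1$, the key point is that applying $\sigma$ to the values $\eta(T_{w,\la})$ and tracking through $\mathfrak{f}$ is precisely the non-standard action $\gamma\mapsto\gamma^{(\sigma)}$ defined just before the theorem via $\left(\mathfrak{f}(\eta)\right)^{(\sigma)}=\mathfrak{f}(\overline{\eta}^{(\sigma)})$, where $\overline{\eta}^{(\sigma)}(T_{w,\la^\sigma})=\eta(T_{w,\la})^\sigma$. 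Combining with \prettyref{prop:moduleact} in the form $\overline{\eta}^{(\sigma)}(T_{w,\la^\sigma})=r_\sigma(w)\delta'_{\la,\sigma}(w)\eta^\sigma(T_{w,\la^\sigma})$, and using that $r_\sigma(w)\in\{\pm1\}$ and $\delta'_{\la,\sigma}$ is linear so that $r_\sigma(w)\delta'_{\la,\sigma}(w)=\left(r_\sigma(w)\delta'_{\la,\sigma}(w^{-1})\right)^{-1}$ behaves like multiplication by the linear character $r_\sigma\cdot(\delta'_{\la,\sigma})^{-1}$, I can express $\eta^\sigma$ as $\gamma^{(\sigma)}$ multiplied by the linear character $w\mapsto r_\sigma(w)\delta'_{\la,\sigma}(w^{-1})$. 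Hence $\gamma'=\mathfrak{f}(\eta^\sigma)$ satisfies $\gamma'(w)=r_\sigma(w)\delta'_{\la,\sigma}(w^{-1})\gamma^{(\sigma)}(w)$, as claimed.

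The main obstacle I anticipate is bookkeeping rather than conceptual: one must be careful that the passage from the module-theoretic $\sigma$-twist of $\mathscr{H}_\la(S)$ to the character identity $\eta^\sigma(T_{w,\la^\sigma})=\ldots$ correctly composes the semilinear isomorphism $-^\sigma$ on $\e{\la}$ (which satisfies $T_{w,\la}^\sigma=r_\sigma(w)\delta'_{\la,\sigma}(w)T_{w,\la^\sigma}$ by \prettyref{cor:iotaTw}) with \prettyref{lem:sigmamatrixrep}, and that the direction of the cocycle/linear-character twist (i.e. $w$ versus $w^{-1}$) is consistent throughout — this is where the $\delta'_{\la,\sigma}(w^{-1})$ rather than $\delta'_{\la,\sigma}(w)$ in the final formula arises, matching \prettyref{prop:moduleact}. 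A secondary point to verify is that $\mathfrak{f}$ genuinely intertwines the "apply $\sigma$ to values" operation on $\irr(\e{\la})$ with the defined operation $(\cdot)^{(\sigma)}$ on $\irr(W(\la))$; but this is immediate from the definition of $\gamma^{(\sigma)}$ together with the fact that $\overline{\mathfrak X}^{(\sigma)}(T_{w,\la^\sigma})=\mathfrak X(T_{w,\la})^\sigma$, since $\mathfrak{f}$ depends only on the values of characters on the $T_{w,\la}$'s via the generic algebra with rational structure constants. Once these compatibilities are recorded, the theorem follows by assembling \prettyref{prop:sigmasimplebij}, \prettyref{prop:moduleact}, and the definition of the non-standard bijection.
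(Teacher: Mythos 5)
Your proposal is correct and follows essentially the same route as the paper, which obtains \prettyref{thm:GaloisAct} precisely by combining \prettyref{prop:sigmasimplebij} (to identify the Hecke-algebra label of the twisted constituent as $\eta^\sigma$), \prettyref{prop:moduleact}, and the definition of the non-standard action $\gamma\mapsto\gamma^{(\sigma)}$ via $\mathfrak{f}$. The only step you (like the paper) leave implicit is that $\mathfrak{f}$ is compatible with multiplying character values on the $T_{w,\la^\sigma}$ by a linear character of $W(\la)$ trivial on $R(\la)$, which holds since $a_w\mapsto\beta(w)a_w$ is an automorphism of the generic algebra commuting with both specializations.
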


We remark that when $R(\la)\leq\ker(\delta_{\la,\sigma})$, $\gamma'$ is just the character $\gamma'=r_\sigma\delta_{\la,\sigma}^{-1}\gamma^{(\sigma)}$.

\section{The Proof of Theorem A 
}\label{sec:SN2S}

As an application of our analysis in \prettyref{sec:gallact}, we complete the proof of Navarro's self-normalizing Sylow $2$-subgroup conjecture (see \prettyref{conj:mainprob}) 
by proving that the remaining simple groups are SN2S-Good, in the sense described in \cite{SchaefferFrySN2S1} (see also below).   We now finally specialize our choice of $\sigma$ to be the Galois automorphism relevant for \prettyref{conj:mainprob}.

\begin{center}
\fbox{
\parbox{6in}{\begin{center}
For the remainder of the article, $\sigma$ will always denote the Galois automorphism in \prettyref{conj:mainprob}, and $\K$ will always denote the fixed field of $\sigma$.\end{center}}
}
\end{center}

\subsection{The Conditions for Simple Groups}\label{sec:SimpleStatements}
Here we recall the definitions and main reduction theorem from \cite{SchaefferFrySN2S1}.

\begin{condition}\label{cond:conjIF} 
Let $G$ be a finite quasisimple group with center $Z:=Z(G)$ and $Q$ a finite $2$-group acting on $G$ as automorphisms.  Assume $P/Z\in \syl_2(G/Z)$ is $Q$-invariant and $C_{N_G(P)/P}(Q)=1$.  Then for any $Q$-invariant, $\sigma$-fixed $\lambda\in\irr(Z)$, we have $\chi^\sigma=\chi$ for any $Q$-invariant $\chi\in\irr_{2'}(G|\lambda)$.
\end{condition}
We note that the condition $C_{N_G(P)/P}(Q)=1$ is equivalent to $GQ/Z$ having a self-normalizing Sylow $2$-subgroup (see, for example, \cite[Lemma 2.1 (ii)]{NavarroTiepTurull2007}) and that to prove \prettyref{cond:conjIF}, it suffices to prove the statement for the Schur cover of $G/Z$.  

\begin{condition}\label{cond:conjFI} 
Let $S$ be a finite nonabelian simple group, $Q$ a finite $2$-group acting on $S$ as automorphisms, and $P\in \syl_2(S)$ be $Q$-invariant.  If every $Q$-invariant $\chi\in\irr_{2'}(S)$ is fixed by $\sigma$, then $C_{N_S(P)/P}(Q)=1$.
\end{condition}

\begin{definition}{\cite[Definition 1]{SchaefferFrySN2S1}}\label{def:Goodness}
Let $S$ be a finite nonabelian simple group.  We will say $S$ is ``SN2S-Good" if $S$ satisfies \prettyref{cond:conjFI} and $G$ satisfies \prettyref{cond:conjIF} whenever $G$ is a quasisimple group with $G/Z(G)\cong S$.

\end{definition}

\begin{theorem}{\cite[Theorem 3.7]{SchaefferFrySN2S1}}\label{thm:NTT62anal} 
 Let $G$ be a finite group and $P\in\syl_2(G)$.  Assume that every finite nonabelian simple group involved in $G$ is SN2S-Good (see \prettyref{def:Goodness}). Then $P=N_G(P)$ if and only if every $\chi\in\irr_{2'}(G)$ is fixed by $\sigma$.
\end{theorem}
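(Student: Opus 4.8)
The plan is to prove both directions of the claimed equivalence at once, by induction on $|G|$, using Clifford theory to strip away normal subgroups until one reaches a configuration governed by \prettyref{cond:conjIF} and \prettyref{cond:conjFI}; this is the argument of \cite[Theorem 3.7]{SchaefferFrySN2S1}, adapting \cite[Theorem 6.2]{NavarroTiepTurull2007}, and I will only sketch its architecture. Throughout, one exploits the two defining features of $\sigma$ (recorded in \prettyref{conj:mainprob}): it fixes every $2$-power root of unity, so that every irreducible character of a normal $2$-subgroup $N$ of $G$ is $\sigma$-fixed, whence $\sigma$ stabilizes each stabilizer $G_\theta$ (for $\theta\in\irr(N)$) and commutes with the Clifford correspondence $\irr_{2'}(G_\theta\mid\theta)\to\irr_{2'}(G\mid\theta)$; and it squares $2'$-roots of unity, which lets the solvable $2'$-part of $G$ be cleared by a parallel Clifford argument. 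Since inflation and Clifford induction preserve $\sigma$-fixedness and $2'$-degrees, one may pass freely between $G$ and the quotients and inertia subgroups that arise.

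First I would handle normal $2$-subgroups. If $N:=O_2(G)\neq 1$, then $N\leq P$ and $N_{G/N}(P/N)=N_G(P)/N$, so $P=N_G(P)$ if and only if $P/N$ is self-normalizing in $G/N$; by induction the latter is equivalent to every character in $\irr_{2'}(G/N)$ being $\sigma$-fixed, and it remains to match $\irr_{2'}(G)$ with $\irr_{2'}(G/N)$ from the point of view of $\sigma$. Given $\chi\in\irr_{2'}(G)$, choose $\theta\in\irr(N)$ below it; since $\theta$ is $\sigma$-fixed, either $G_\theta<G$ and one induces from the Clifford correspondent in $\irr_{2'}(G_\theta\mid\theta)$ after checking that $G_\theta$ inherits the hypotheses and has a self-normalizing Sylow $2$-subgroup (using $N_G(P)=P\leq G_\theta$), or $\theta$ is $G$-invariant, in which case minimality of a counterexample forces $N\leq Z(G)$ and one argues through a $\sigma$-equivariant character triple isomorphism. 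Combining this with the analogous reduction for $O_{2'}(G)$, and removing any solvable normal subgroup on which all of $\irr_{2'}(G)$ is trivial, one reduces (for a minimal counterexample) to the situation $F^*(G)=O_2(G)E(G)$ with $C_G(F^*(G))\leq F^*(G)$ and $E(G)$ a central product of quasisimple components, all having the same simple quotient $S$ up to $G$-conjugacy.

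In this core case, $G$ is an extension of the central product $E=E(G)$ by a $2$-group and acts on $E$; let $Q$ be a Sylow $2$-subgroup of the acting group, chosen so that, by \cite[Lemma 2.1(ii)]{NavarroTiepTurull2007} (recalled after \prettyref{cond:conjIF}), the condition $C_{N_G(P)/P}(Q)=1$ is equivalent to $P=N_G(P)$. For the implication $P=N_G(P)\Rightarrow$ every character in $\irr_{2'}(G)$ is $\sigma$-fixed, I would apply \prettyref{cond:conjIF} to the quasisimple components: $C_{N_G(P)/P}(Q)=1$ forces the corresponding local hypothesis on each component, so every $Q$-invariant character in $\irr_{2'}(E)$ lying over a $Q$-invariant $\sigma$-fixed central character is $\sigma$-fixed, and this propagates up to $G$ through the Gallagher and character-triple correspondences attached to the $2$-group extension. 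For the converse, if $P\neq N_G(P)$ then $C_{N_S(P_0)/P_0}(Q_0)\neq 1$ for the action induced on $S$, so the contrapositive of \prettyref{cond:conjFI} produces a $Q_0$-invariant $\chi_0\in\irr_{2'}(S)$ with $\chi_0^\sigma\neq\chi_0$, which one inflates or induces to a character in $\irr_{2'}(G)$ that is still not $\sigma$-fixed.

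The step I expect to be the main obstacle is the bookkeeping of the $\sigma$-action across Clifford theory when inertia groups shrink: one must verify that the Clifford correspondent, the Gallagher multiplier, and the passage to a fixed central character $\lambda\in\irr(Z)$ all behave with respect to $\sigma$ as required — which is exactly why \prettyref{cond:conjIF} and \prettyref{cond:conjFI} are stated for a quasisimple group over a $\sigma$-fixed $\lambda\in\irr(Z)$ together with an auxiliary $2$-group $Q$ of automorphisms, rather than for $S$ in isolation. A secondary difficulty is the case of several components permuted transitively by $G$, handled by the standard reduction to the stabilizer of one component together with wreath-product bookkeeping. I expect both points are absorbed by the general $\sigma$-equivariant Clifford-theoretic lemmas proved in \cite{SchaefferFrySN2S1}, so that no ingredient beyond \prettyref{cond:conjIF}, \prettyref{cond:conjFI}, and the structure theory of $F^*(G)$ is required.
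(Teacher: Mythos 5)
You should first be aware that this statement is not proved in the present article at all: it is quoted, with citation, from \cite[Theorem 3.7]{SchaefferFrySN2S1}, the author's earlier reduction paper (itself modelled on \cite{NavarroTiepTurull2007}), and the role of the present paper is only to verify \prettyref{cond:conjIF} and \prettyref{cond:conjFI} for the remaining simple groups. So there is no internal proof to compare with; your sketch reproduces the broad architecture of the external reduction (induction on $|G|$, Clifford theory over normal subgroups, passage to the generalized Fitting subgroup, and an appeal to the two conditions on the quasisimple layer), which is indeed the right skeleton.

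As a proof, however, the sketch has genuine gaps at exactly the points where the reduction is hard. First, the claim that the ``solvable $2'$-part can be cleared by a parallel Clifford argument'' is not parallel at all: for $N=O_2(G)$ every $\theta\in\irr(N)$ is $\sigma$-fixed (values lie in $2$-power cyclotomic fields), so $\sigma$ commutes with the Clifford correspondence for free, whereas for an odd-order normal subgroup the characters $\theta$ are precisely the ones $\sigma$ moves, and this is where the equivalence lives; one needs the coprime-action machinery (e.g.\ $C_N(P)=1$ when $N_G(P)=P$, Glauberman-type correspondences, and an analysis of which $P$-invariant $\theta$ are $\sigma$-fixed), none of which your outline supplies. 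Second, in the core case you assert that $G$ is an extension of $E(G)$ by a $2$-group; a minimal counterexample gives you no such control over $G/F^*(G)$, and the conditions are formulated with an auxiliary $2$-group $Q$ of automorphisms precisely because one must extract a suitable $2$-group (coming from $P$ or $N_G(P)$ acting on a component, after the wreath-product reduction for permuted components) rather than assume the whole top is a $2$-group; the step ``$C_{N_G(P)/P}(Q)=1$ forces the corresponding local hypothesis on each component'' also needs a genuine argument. Finally, both acknowledged difficulties are deferred to ``$\sigma$-equivariant Clifford-theoretic lemmas proved in \cite{SchaefferFrySN2S1}'' --- that is, to the very paper whose theorem you are proving --- so as a blind proof the proposal is an outline of the known strategy rather than a complete argument.
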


The results of \cite{SchaefferFrySN2S1} and \cite{SFTaylorTypeA} imply that to prove \prettyref{conj:mainprob}, it suffices to show that when $q$ is odd, the simple groups $E_6^\pm(q)$, $E_8(q)$, $P\Omega_n^\pm(q)$ for $n\geq 7$, and $PSp_{2n}(q)$ for $n\geq2$ are SN2S-Good.
Here $E_6^+(q)$ means the untwisted simple group $E_6(q)$, $E_6^-(q)$ denotes the twisted simple group $\tw{2}{E}_6(q),$ and $P\Omega_n^\pm(q)$ is taken to mean $P\Omega_{n}(q)$ in the case $n$ is odd.
 
Sylow $2$-normalizers for finite simple groups are considered in \cite{kondratiev2005}.  From this, we see that if $S$ is one of the simple groups $PSp_{2n}(q)$ with $q\equiv\pm1\pmod8$, $P\Omega_n^\pm(q)$ with $q$ odd, or $E_8(q)$ with $q$ odd, then $S$ has a self-normalizing Sylow 2-subgroup, and hence this will hold for the Schur cover $G$ as well.  In these cases, to prove SN2S-Goodness, it therefore suffices to show that every irreducible character of $G$ of odd degree is fixed by $\sigma$. 

 If $S$ is $PSp_{2n}(q)$ with $q\equiv\pm3\mod8$, there is not a self-normalizing Sylow $2$-subgroup.  Further, in this case $q$ must be an odd power of an odd prime, so a $2$-group of automorphisms $Q$ must be contained in $PCSp_{2n}(q)$, which does have a self-normalizing Sylow 2-subgroup (see, for example, \prettyref{lem:CSpSN2S} below). Further, \cite[Proposition 4.8]{Malle08} and the discussion preceding it yields that the $PCSp_{2n}(q)$-invariant characters with odd degree are exactly the unipotent characters of odd degree and that since the Schur multiplier is size $2$, odd-degree characters of the Schur cover can be considered as characters of $S$. Thus in this case it suffices to show that every unipotent character of odd degree of $S$ is fixed by $\sigma$ and that there exists some non-unipotent $\chi\in\irr_{2'}(S)$ which is not fixed by $\sigma$.  The first of these conditions is verified by the following proposition.

\begin{proposition}\label{prop:unipsfixed}
Let $G$ be a finite group of Lie type with odd defining characteristic and no component of Suzuki or Ree type.  Then every unipotent character of odd degree is realizable over $\Q$.
\end{proposition}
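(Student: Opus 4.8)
The plan is to reduce the statement to a known rationality result for unipotent characters via the theory of Lusztig and then invoke the explicit degree formulas. First I would recall that by work of Lusztig (and Geck's extension to groups with disconnected center), the unipotent characters of $G$ are invariant under $\Aut(G)$ in the sense that they are fixed by field and diagonal automorphisms, and moreover they are \emph{rational-valued}: every unipotent character $\chi$ of $G$ has $\chi(g)\in\Q$ for all $g\in G$. This is the key input, and the cleanest reference is Geck's result that unipotent characters have values in $\Q$ for groups of Lie type without Suzuki or Ree components (the Suzuki and Ree groups being precisely the ones excluded because there the relevant Hecke algebra parameters are genuine square roots of $q$, forcing values in $\Q(\sqrt 2)$ or $\Q(\sqrt 3)$). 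Since $\Q$-valued does not by itself imply realizable over $\Q$ — one must rule out a Schur index of $2$ — the next step is to observe that for unipotent characters of \emph{odd} degree the Schur index over $\Q$ is $1$. Indeed, the Schur index divides the degree of any field of realization over $\Q_{|G|}$, but more to the point one can use that a $\Q$-valued character $\chi$ with $\chi(1)$ odd must have Schur index $1$ over $\Q$: the Schur index $m_\Q(\chi)$ divides $\chi(1)$ by the Roquette/Fein theorem (the degree of the division algebra $D$ in the simple component of $\Q G$ affording $\chi$ divides $\chi(1)$), and for $\ell=2$ the $2$-part of the Schur index is controlled by the local index at the real place and at $2$, which again divides $\chi(1)$; since $\chi(1)$ is odd, the $2$-part of $m_\Q(\chi)$ is $1$, and the odd part of the Schur index of a character of a finite group over $\Q$ is also $1$ because $\Q$ has no finite-dimensional division algebras of odd degree split by all completions... more simply, $m_\Q(\chi)\mid\chi(1)$ and the Benard–Schacher theorem forces $m_\Q(\chi)$ to divide the exponent of the relevant cyclotomic Galois group, but the self-contained route is: a $\Q$-valued character has Schur index over $\Q$ equal to $1$ or $2$, and if $\chi(1)$ is odd it cannot be $2$. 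So $\chi$ is afforded by a $\Q G$-module, i.e. realizable over $\Q$.

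Concretely, the steps I would carry out are: (1) reduce to the case $G$ is one of the quasisimple groups at hand, or more generally note the statement is about an arbitrary group of Lie type as stated; (2) cite Lusztig's classification and Geck's theorem that unipotent characters of $G$ are $\Q$-valued when $G$ has no Suzuki or Ree component — this is where the hypothesis on the components is used; (3) recall the Fein–Roquette/Brauer–Speiser type fact that a rational-valued irreducible character of a finite group has Schur index over $\Q$ dividing $2$, with the index equal to $2$ only possible when $\chi(1)$ is even (this uses that the only obstruction is the quaternion algebra at the infinite place, which forces an even degree); (4) conclude that an odd-degree unipotent character has Schur index $1$ over $\Q$, hence is realizable over $\Q$. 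One must be slightly careful about whether "$G$ a finite group of Lie type" is meant to include non-simple or non-adjoint/simply-connected isogeny types, but unipotent characters and their values behave uniformly under isogeny (they are inflated from/restricted to the adjoint group compatibly), so the rationality statement transfers without difficulty; the degree is also unchanged, so the hypothesis "odd degree" is preserved.

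The main obstacle — such as it is — is not conceptual but bookkeeping: pinning down the precise reference for $\Q$-rationality of unipotent character \emph{values} (as opposed to mere $\Aut$-invariance of the unipotent characters as a set), and being careful that the Suzuki/Ree exclusion is exactly what is needed there. A secondary subtlety is the Schur index argument: one should make sure to invoke the correct classical statement, namely that for a finite group $H$ and $\chi\in\Irr(H)$ with $\chi$ rational-valued, $m_\Q(\chi)\in\{1,2\}$ (this follows from the Brauer–Speiser theorem), together with $m_\Q(\chi)\mid\chi(1)$; the odd-degree hypothesis then immediately gives $m_\Q(\chi)=1$. I do not expect any real difficulty beyond locating these two ingredients in the literature and chaining them; the proof should be only a few lines once they are in hand.

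\medskip

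\begin{proof}[Proof sketch]
By Lusztig's classification of unipotent characters (together with Geck's extension to groups with disconnected center), every unipotent character $\chi$ of $G$ is rational-valued, i.e.\ $\chi(g)\in\Q$ for all $g\in G$, provided $G$ has no component of Suzuki or Ree type; this is the point at which that hypothesis is used, since for those types the parameters of the relevant Iwahori--Hecke algebras are not integral powers of $q$ and force some unipotent character values to lie in $\Q(\sqrt 2)$ or $\Q(\sqrt 3)$. By the Brauer--Speiser theorem, a rational-valued irreducible character of a finite group has Schur index over $\Q$ equal to $1$ or $2$. On the other hand, the Schur index $m_\Q(\chi)$ divides $\chi(1)$. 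Hence if $\chi(1)$ is odd then $m_\Q(\chi)=1$, so $\chi$ is afforded by a $\Q G$-module; that is, $\chi$ is realizable over $\Q$. Finally, unipotent characters and their degrees are compatible with isogenies of the underlying algebraic group, so the conclusion holds for $G$ of arbitrary isogeny type.
\end{proof}
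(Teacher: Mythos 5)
The Schur-index half of your argument is fine: Brauer--Speiser gives $m_\Q(\chi)\in\{1,2\}$ for a rational-valued $\chi$, the Schur index divides $\chi(1)$, and odd degree then forces $m_\Q(\chi)=1$. The genuine gap is your key input. It is \emph{not} true that every unipotent character of a group of Lie type without Suzuki or Ree components is rational-valued: the cuspidal unipotent characters attached to non-rational eigenvalues of Frobenius are counterexamples, e.g.\ $G_2[\theta],G_2[\theta^2]$, $F_4[\pm i]$, $E_6[\theta],E_6[\theta^2]$ (and their ${}^2E_6$ analogues), $E_7[\pm\xi]$, $E_8[\zeta_5^j]$, whose character fields are $\Q(\theta)$, $\Q(i)$, $\Q(\sqrt{-q})$, $\Q(\zeta_5)$, etc. So the theorem you want to cite in step (2) does not exist in that generality, and the exclusion of Suzuki/Ree in the proposition is not the reason those cases are harmless here; what saves the statement is the odd-degree hypothesis, which you do not use until the Schur-index step. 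As written, the rationality of values for the characters you care about is exactly the unproved point.

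The paper's proof supplies the missing reduction: by Malle--Sp\"ath (\cite[Proposition 7.4]{MalleSpathMcKay2}), every unipotent character of odd degree lies in the principal series, i.e.\ is a constituent of $R_T^G(1)$, and by Benson--Curtis (\cite[Theorem 2.9]{BensonCurtis}, with the correction) every odd-degree constituent of $R_T^G(1)$ is realizable over $\Q$ outright --- so no Schur-index argument is needed. Your approach can be repaired by inserting that first reduction: for principal-series unipotent characters the Frobenius eigenvalue is $1$ and the values are rational, after which your Brauer--Speiser argument closes the Schur-index question (this is in fact the content of the Benson--Curtis exceptions: the even-degree principal-series unipotent characters of $E_7$ and $E_8$ coming from the $512$- and $4096$-dimensional Weyl group characters are realizable only over $\Q(\sqrt q)$, which is exactly why the odd-degree hypothesis is needed even after restricting to the principal series). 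Without the principal-series step, however, the proof as proposed rests on a false citation.
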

\begin{proof}
By \cite[Proposition 7.4]{MalleSpathMcKay2}, every unipotent character of odd degree lies in the principal series of $G$.  But according to \cite[Theorem 2.9]{BensonCurtis} (see also \cite{BensonCurtisCorrection}), every member of $\irr_{2'}(G|R_T^G(1))$ can be realized over $\Q$.
\end{proof}

\subsection{Toward SN2S-Goodness}\label{sec:oddchars}

We collect here some additional observations that will be useful for determining SN2S-Goodness.  In what follows, we let $\bG{G}$ be a connected reductive algebraic group and $G=\textbf{G}^F$ be the group of fixed points of $\textbf{G}$ under a Frobenius endomorphism $F$ defined over $\F_q$.  We will at times do computations in $G$ using the Chevalley generators and relations, as in \cite[Theorem 1.12.1]{gorensteinlyonssolomonIII}.  In particular, $x_\alpha(t), n_\alpha(t),$ and $h_\alpha(t)$ are as defined there.

The dual of $G$ is $G^\ast=\textbf{G}^{\ast F^\ast}$, where $\bG{G}^\ast$ is dual to $\bG{G}$ as in, for example, \cite[Section 4.2]{carter2}.  Let $\textbf{T}$ and $\textbf{B}$ be an $F$-stable maximal torus and Borel subgroup of $\textbf{G}$, respectively, with $\textbf{T}\leq\textbf{B}$  fixed as in \cite[Section 2.B]{MalleSpathMcKay2} and let $\bG{T}^\ast$ be an $F^\ast$-stable maximal torus of $\bG{G}^\ast$ dual to $\bG{T}$.  Write $T=\textbf{T}^F$ and $T^\ast=(\bG{T}^\ast)^{F^\ast}$.   We write $W=\bg{W}^F$, where $\bg{W}=N_\bg{G}(\bg{T})/\bg{T}$, and similarly for $W^\ast$.

This duality induces an isomorphism $\irr(T)\rightarrow T^\ast$.  We begin by reconciling our understanding of $W(\la)$ in the case $\la\in\irr(T)$ with a Weyl group, using this isomorphism.  We remark that given $s\in T^\ast$, the only nontrivial action of $F^\ast$ on the Weyl group $\bg{W}(s)$ of $C_{\bG{G}^\ast}(s)$ is given by the automorphism induced by that of $F^\ast$ on $\bg{W}^\ast$, since $s$ is contained in a maximally split torus.

\begin{lemma}\label{lem:W(s)}

Let $\la\in\irr(T)$ and let $s\in T^\ast$ correspond to $\lambda$ in the sense of \cite[Proposition 4.4.1]{carter2}.  Denote by $W(s)$ and $W^\circ(s)$ the fixed points of the Weyl groups of $C_{\bG{G}^\ast}(s)$ and  $C_{\bG{G}^\ast}^\circ(s)$, respectively, under $F^\ast$.  Then 
\begin{enumerate}[label={(\arabic*)}]
\item $W(\la)$ is isomorphic to $W(s)$. 
\item If $\bG{G}$ is simple of simply connected type, not of type $A_n$, then there is an isomorphism $\kappa\colon W(\la)\rightarrow W(s)$ such that $\kappa(R(\la))=W^\circ(s)$.  In particular, in this case $W(\la)/R(\la)$ is isomorphic to $(C_{\bG{G}^\ast}(s)/C_{\bG{G}^\ast}^\circ(s))^{F^\ast}$.

\end{enumerate}

\end{lemma}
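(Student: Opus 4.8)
The plan is to reduce both parts to known descriptions of the relative Weyl group $W(\lambda)$ and of the centralizer $C_{\bG{G}^\ast}(s)$. For part (1), I would recall that when $L=T$ is a maximally split torus, the relative Weyl group $W(L)=W$ is the full Weyl group and $W(\lambda)=W_\lambda$ is the stabilizer of $\lambda$ under the natural action of $W$ on $\irr(T)$. Transporting through the duality isomorphism $\irr(T)\xrightarrow{\sim}T^\ast$ of \cite[Proposition 4.4.1]{carter2}, which is $W\cong W^\ast$-equivariant, this stabilizer is identified with $W^\ast_s = \{w\in W^\ast \mid w(s)=s\}$. Now $W^\ast$ is the $F^\ast$-fixed points $(\bg W^\ast)^{F^\ast}$, and since $s$ lies in the maximally split torus $\bG T^\ast$, the centralizer $C_{\bg W^\ast}(s)$ is exactly the (possibly disconnected) Weyl group of $C_{\bG G^\ast}(s)$ in the sense made precise in the remark preceding the lemma; taking $F^\ast$-fixed points gives $W^\ast_s = W(s)$. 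This establishes (1).

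For part (2), the extra hypothesis that $\bG G$ is simple, simply connected, and not of type $A_n$ is what makes the structure theory clean. I would invoke the description (going back to Howlett--Lehrer and worked out in \cite[4.B]{MalleSpathMcKay2}, which the excerpt has already set up) that $W(\lambda)=C(\lambda)\ltimes R(\lambda)$, where $R(\lambda)=\langle s_\alpha\mid\alpha\in\Phi_\lambda\rangle$ is the reflection subgroup generated by those reflections for which the ``ratio'' $p_{\alpha,\lambda}\neq 1$. On the dual side, the analogous decomposition of $C_{\bG G^\ast}(s)$ is the standard one: $C^\circ_{\bG G^\ast}(s)$ is a connected reductive group whose Weyl group is generated by the reflections $s_\alpha$ with $\alpha(s)=1$ (i.e.\ the roots of $C^\circ_{\bG G^\ast}(s)$), and $C_{\bG G^\ast}(s)/C^\circ_{\bG G^\ast}(s)$ is the component group. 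The key point is that under the duality, the condition $p_{\alpha,\lambda}\neq 1$ matches the condition $\alpha^\vee(s)=1$ defining a coroot of the centralizer; this is essentially \cite[Theorem 2]{howlettlehrer83} combined with the explicit computation of $p_{\alpha,\lambda}$ for the rank-one Levi $L_\alpha$ — in type $T\le L_\alpha$ with $L_\alpha$ of semisimple rank $1$, $R_T^{L_\alpha}(\lambda)$ is irreducible precisely when $\lambda$ is not fixed by $s_\alpha$, and when $s_\alpha$ does fix $\lambda$ the two constituents have degrees $1$ and $q$, so $p_{\alpha,\lambda}=q\neq 1$ exactly when $s_\alpha\in W_\lambda$, which on the dual side says $s$ centralizes the corresponding $SL_2$ or $PGL_2$, i.e.\ contributes a root to $C_{\bG G^\ast}(s)$. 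Thus $\kappa$ carries $\Phi_\lambda$ to the root system of $C^\circ_{\bG G^\ast}(s)$, hence $R(\lambda)$ to $W^\circ(s)$, and therefore induces $W(\lambda)/R(\lambda)\cong W(s)/W^\circ(s)\cong (C_{\bG G^\ast}(s)/C^\circ_{\bG G^\ast}(s))^{F^\ast}$, using again that $s$ is in a maximally split torus so that $F^\ast$ acts on the component group only through its action on $\bg W^\ast$.

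The main obstacle is the last identification in (2): matching $\Phi_\lambda$ (defined via the Hecke-algebra parameters $p_{\alpha,\lambda}$) with the root system of the centralizer $C^\circ_{\bG G^\ast}(s)$, and confirming that the component group $C_{\bG G^\ast}(s)/C^\circ_{\bG G^\ast}(s)$ really is isomorphic to $C(\lambda)$ as an $F^\ast$-group. This is where the restriction away from type $A_n$ is needed — in type $A_n$ the centralizer of a semisimple element in $SL_n$ is always connected, so the statement would be vacuous, but more importantly the general simply-connected hypothesis guarantees (by a theorem of Steinberg) that $C_{\bG G^\ast}(s)$ is connected whenever $\bG G^\ast$ — hmm, rather, one must be careful: it is $C_{\bG G}(s)$ that is connected for $\bG G$ simply connected, whereas here $s\in\bG G^\ast$ with $\bG G^\ast$ of adjoint type, so disconnectedness genuinely occurs and the component group is exactly what is being computed. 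I would handle this by citing the standard structure theory of centralizers of semisimple elements (e.g.\ \cite[Section 3]{carter2} or Digne--Michel) for the existence and shape of the decomposition $C_{\bG G^\ast}(s)=C^\circ_{\bG G^\ast}(s)\rtimes(\text{component group})$ with the component group a subquotient of the Weyl group stabilizing a base, and then verifying the numerical matching $p_{\alpha,\lambda}\neq 1 \Leftrightarrow \alpha^\vee(s)=1$ case by case on the rank-one Levis, which is routine. Granting that dictionary, $\kappa$ is forced and both claims of (2) follow; the final ``in particular'' is then immediate by passing to the quotient and taking $F^\ast$-fixed points.
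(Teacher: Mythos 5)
Your part (1) is essentially the paper's own argument (the duality $\irr(T)\cong T^\ast$ is $W\cong W^\ast$-equivariant, and one compares stabilizers on both sides), so that half is fine. The genuine problem is in part (2), at precisely the step you dismiss as routine: the rank-one computation you propose is false. It is not true that whenever $s_\alpha$ fixes $\lambda$ the two constituents of $R_T^{L_\alpha}(\lambda)$ have degrees $1$ and $q$. Take $L_\alpha$ with derived subgroup $SL_2(q)$ and $\lambda$ of order $2$ on the coroot image $h_\alpha(\F_q^\times)$: then $s_\alpha$ fixes $\lambda$, but the induced character splits into two constituents of equal degree $(q+1)/2$, so $p_{\alpha,\lambda}=1$ and $\alpha\notin\Phi_\lambda$. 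Hence your asserted equivalence ``$p_{\alpha,\lambda}\neq 1$ exactly when $s_\alpha\in W_\lambda$'' is wrong, and with it your justification that $\kappa$ carries $\Phi_\lambda$ onto the root system of $C^\circ_{\bG{G}^\ast}(s)$. The entire content of (2) lives in this distinction: reflections fixing $\lambda$ with $\lambda$ of order $2$ on the coroot subgroup are exactly those contributing to $C(\lambda)$, dually to the component group of $C_{\bG{G}^\ast}(s)$, rather than to $R(\lambda)$ and $W^\circ(s)$. If your equivalence held, $C(\lambda)$ could never contain a reflection, whereas the paper later relies on exactly that phenomenon (see the proof of Theorem \ref{thm:typeC}, where $C(\lambda)$ contains an element conjugate to a simple reflection).

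The correct bridge, and the one the paper uses, is \cite[Lemma 5.1]{MalleSpathMcKay2} (this is also where ``simply connected, not of type $A_n$'' genuinely enters): for such $\bG{G}$, $R(\la)$ is generated by the $s_\alpha$ with $\la(h_\alpha(t))=1$ for all $t\in\F_q^\times$, i.e.\ $\la$ trivial on the image of the coroot, which under the duality $\irr(T)\cong T^\ast$ translates into $\alpha^\ast(s)=1$; since $W^\circ(s)$ is generated by the $s_{\alpha^\ast}$ with $\alpha^\ast(s)=1$ (\cite[Remark 2.4]{dignemichel}), this gives $\kappa(R(\la))=W^\circ(s)$, and the ``in particular'' follows because the component group of $C_{\bG{G}^\ast}(s)$ is $W(s)/W^\circ(s)$ with $F^\ast$ acting only through $\bg{W}^\ast$, as $s$ lies in a maximally split torus. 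So your overall architecture matches the paper, but the dictionary between the Hecke-algebra parameters and the dual-side roots must be established via triviality of $\la$ on coroot subgroups (or by citing the Malle--Sp\"ath lemma), not via the stabilizer condition $s_\alpha\in W(\la)$; as written, the crucial matching step fails.
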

\begin{proof}
Let $W^\ast$ denote the Weyl group of $\bG{G}^\ast$ with respect to $\bG{T}^\ast$ and write $(X(\bG{T}), \Phi, Y(\bG{T}), \Phi^\ast)$ and $(X(\bG{T}^\ast), \Phi^\ast, Y(\bG{T}^\ast), \Phi)$ for the root data for $\bG{G}$ and $\bG{G}^\ast$, respectively.  By the duality of $\bG{G}$ and $\bG{G}^\ast$, we have an isomorphism $X(\bG{T})\rightarrow Y(\bG{T}^\ast)$, which induces an isomorphism $W\rightarrow W^\ast$.  Statement (1) follows from the descriptions of this isomorphism and the isomorphism $\irr(T)\cong T^\ast$ (see, for example, \cite[Propositions 4.2.3, 4.4.1]{carter2}), together with the identification of $W(s)$ as the subgroup of $W^\ast$ consisting of the elements which fix $s$ and that of $W(\la)$ as the subgroup of $W$ of elements which fix $\la$.

Now let $\bG{G}$ be simple of simply connected type, not of type $A_n$.  Then by \cite[Lemma 5.1]{MalleSpathMcKay2} and its proof, we have $R(\la)$ is generated by the $s_\alpha$ for $\alpha\in\Phi$ such that $\la(h_{\alpha}(t))=1$ for each $t\in\F_q^\times$.  But we may identify $W^\circ(s)$ as the subgroup of $W^\ast$ generated by $s_{\alpha^\ast}$, $\alpha^\ast\in\Phi^\ast$ such that $\alpha^\ast(s)=1$.  Hence statement (2) again follows from the definitions of the isomorphisms and \cite[Remark 2.4]{dignemichel}.  
 \end{proof}

It is clear that an understanding of odd-degree characters will be imperative for proving SN2S-Goodness.  For this reason, we next recall some key statements regarding odd-degree characters from \cite{MalleSpathMcKay2}.   

\begin{theorem}{\cite[Theorem 7.7]{MalleSpathMcKay2}}\label{thm:MS7.7}
Let $\textbf{G}$ be simple, of simply connected type, not of type ${A}_n$.  Let $\chi \in\irr_{2'}(G)$. Then either $\chi$ lies in the principal series of
$G$, or $q\equiv 3\pmod{4}$, $G=Sp_{2n}(q)$ with $n\geq 1$ odd, $\chi\in\mathcal{E}(G,s)$ with $C_{G^\ast}(s)=B_{2k}(q)\cdot\tw{2}{D}_{n-2k}(q).2$, where $0\leq k\leq (n-3)/2$ and $\chi$ lies in the Harish-Chandra series of a cuspidal character of degree $(q-1)/2$ of a Levi subgroup $Sp_2(q)\times (q-1)^{n-1}$.\end{theorem}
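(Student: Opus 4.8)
The plan is to reduce the determination of $\irr_{2'}(G)$ to a question about unipotent characters of centralizers via Lusztig's Jordan decomposition, and then to read off the Harish--Chandra series. Recall that $\irr(G)$ partitions into Lusztig series $\E(G,s)$ indexed by semisimple classes $s$ of $G^\ast$, and that---since $\bG{G}$ is simply connected, so that $\bG{G}^\ast$ is of adjoint type and $C_{\bG{G}^\ast}(s)$ may be disconnected---Lusztig's Jordan decomposition, in the form valid for disconnected centralizers, yields a bijection of $\E(G,s)$ with the set of ``unipotent'' characters of $C_{G^\ast}(s)$, that is, the characters built from unipotent characters of $C_{G^\ast}^\circ(s)$ via Clifford theory relative to the component group $C_{G^\ast}(s)/C_{G^\ast}^\circ(s)$. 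Under this bijection the correspondent $\psi$ of $\chi$ satisfies $\chi(1)=[G^\ast:C_{G^\ast}(s)]_{p'}\,\psi(1)$, so $\chi$ has odd degree if and only if \emph{both} $[G^\ast:C_{G^\ast}(s)]$ is odd \emph{and} $\psi(1)$ is odd. The proof then amounts to classifying such pairs $(s,\psi)$ and, for each, identifying the Harish--Chandra series of $\chi$.

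First I would pin down the semisimple $s$ with $|C_{G^\ast}(s)|_2=|G^\ast|_2$. This is a computation with order polynomials: the orders of $G^\ast$ and of the connected reductive group $C_{G^\ast}^\circ(s)$ factor into cyclotomic polynomials evaluated at $q$, and for odd $q$ the only even such factors are the $\Phi_e(q)$ with $e$ a power of $2$---among which exactly one of $\Phi_1(q)=q-1$ and $\Phi_2(q)=q+1$ carries the bulk of the $2$-part (according as $q\equiv1$ or $3\pmod4$), the others each carrying only a single factor $2$. Combining this with the explicit shapes of centralizers of semisimple elements in the adjoint groups of types $B_n$, $C_n$, $D_n$, $\tw{2}{D}_n$, $E_6$, $\tw{2}{E}_6$ and $E_8$ (and with the contribution of the component group), the condition forces $C_{G^\ast}^\circ(s)$ to contain a Sylow $\Phi_d$-torus of $G^\ast$ for $d\in\{1,2\}$ and tightly restricts the root subsystem of $C_{G^\ast}^\circ(s)$; carrying this out type by type produces a short explicit list of admissible $s$.

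Next, for each admissible $s$ I would determine which unipotent characters $\psi$ of $C_{G^\ast}(s)$ have odd degree. By Lusztig's classification and the generic-degree formulas, $\psi(1)$ is a power of $q$ times an explicit product of cyclotomic polynomials, and $2\nmid\psi(1)$ reduces to a combinatorial condition---essentially a $2$-core condition on the symbol or pair of partitions labelling the relevant classical factors---together with a finite check in any exceptional factor. For the classical factors this is exactly where the results of \cite[Section~7]{MalleSpathMcKay2} on odd-degree unipotent characters are used (of which \cite[Proposition~7.4]{MalleSpathMcKay2}, quoted above, records that such characters lie in the principal series).

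Finally, I would transport the conclusion to $G$ via the compatibility of Lusztig's Jordan decomposition with (ordinary) Harish--Chandra induction. The Harish--Chandra cuspidal support $(\bG{L},\lambda)$ of $\chi$ is governed by that of $\psi$ inside $C_{G^\ast}(s)$ together with $s$: the Levi $\bG{L}$ is dual to a Levi $\bG{L}^\ast$ of $\bG{G}^\ast$ containing $s$ with $C_{\bG{L}^\ast}(s)$ equal to the Levi support of the Harish--Chandra series of $\psi$ in $C_{G^\ast}(s)$, and $\lambda\in\E(\bG{L}^F,s)$ is the Jordan correspondent of the cuspidal support of $\psi$. Hence $\chi$ lies in the principal series of $G$ exactly when $\bG{L}^\ast$ may be taken to be a maximal torus. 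Tracing this through the admissible list, in every type other than $C_n$ such a torus can always be chosen, yielding the first alternative. For $G=Sp_{2n}(q)$, however, $G^\ast=SO_{2n+1}(q)$ has the rank-one Levi $SO_3(q)\cong PGL_2(q)$, whose dual $SL_2(q)=Sp_2(q)$ carries a cuspidal discrete-series character of degree $(q-1)/2$---of odd degree precisely when $q\equiv3\pmod4$; pairing the associated involution $s$ (acting as $-1$ on an orthogonal subspace of minus type) with an $O_{2m}^-(q)$ factor contributing the remaining odd part of the index forces the centralizer shape $B_{2k}(q)\cdot\tw{2}{D}_{n-2k}(q).2$ and, through the $2$-part bookkeeping of the first step, the parity condition that $n-2k$---and hence $n$---be odd; this is the second alternative. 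I expect the hardest part to be precisely this last step: controlling the disconnected centralizers throughout the Jordan decomposition (the role of the ``$.2$'') and rigorously matching cuspidal supports in the twisted type $\tw{2}{D}$ situation, where Lusztig induction and Harish--Chandra induction interact in a way that is not merely formal.
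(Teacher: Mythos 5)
This statement is not proved in the paper at all: it is quoted verbatim from \cite[Theorem 7.7]{MalleSpathMcKay2}, so there is no in-paper argument to measure your proposal against. What you have written is, in outline, the strategy of the cited source rather than anything new: Jordan decomposition with the degree formula $\chi(1)=[G^\ast:C_{G^\ast}(s)]_{p'}\psi(1)$ reducing oddness of $\chi(1)$ to oddness of the centralizer index together with oddness of the unipotent correspondent, the cyclotomic-polynomial bookkeeping that forces $C_{G^\ast}(s)$ to contain a Sylow $2$-subgroup of $G^\ast$, the classification of odd-degree unipotent characters (which this paper also imports, as \cite[Proposition 7.4]{MalleSpathMcKay2} in \prettyref{prop:unipsfixed}), and the transfer of cuspidal supports back to $G$, with the type-$C$ exception coming from the degree-$(q-1)/2$ cuspidal character of $Sp_2(q)$, odd exactly when $q\equiv 3\pmod 4$. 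Your sketch is sound at this level of resolution, and you correctly identify the genuinely delicate point: Jordan decomposition is not canonical (and its compatibility with Harish--Chandra induction is not automatic) when $C_{\bG{G}^\ast}(s)$ is disconnected, so the final matching of cuspidal supports --- in particular the role of the component group ``$.2$'' and the $\tw{2}{D}$ factors --- cannot be treated as formal and is exactly where the full case analysis of the original proof is needed. Since the present article simply cites the result, no further comparison is possible; if you want to flesh out your sketch, the place to invest effort is that last compatibility step and the type-by-type list of admissible $s$.
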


\begin{lemma}{\cite[extension of Lemma 7.9]{MalleSpathMcKay2}}\label{lem:MS7.9}
 Let $\textbf{G}$ be simple, of simply connected type, not of type ${A}_n$. Let $\chi\in\irr_{2'}(G)$.  Then $\chi=R_T^G(\lambda)_\gamma$, where $T$ is a maximally split torus of $G$, $\lambda\in\irr(T)$ is such that $2\nmid[W:W(\lambda)]$, and $\gamma\in\irr_{2'}(W(\lambda))$, except possibly in the case  $\textbf{G}$ is type ${C}_n$ and $q\equiv3\pmod{4}$.  In the latter case, $\chi$ may also be of the form $\chi=R_L^G(\lambda)_{\gamma}$ with $(L,\lambda)$ as in \prettyref{thm:MS7.7}, $2\nmid[W(L):W(\lambda)]$ and $\gamma\in\irr_{2'}(W(\lambda))$.
\end{lemma}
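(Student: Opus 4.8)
The goal is to upgrade \cite[Lemma 7.9]{MalleSpathMcKay2} so that the representative $\lambda \in \irr(T)$ can be chosen with $2 \nmid [W:W(\lambda)]$ (and analogously $2\nmid[W(L):W(\lambda)]$ in the type $C_n$, $q\equiv 3 \pmod 4$ case), not merely so that $\chi$ lies in \emph{some} principal series (resp.\ the series of \prettyref{thm:MS7.7}). The plan is to start from \prettyref{thm:MS7.7}: given $\chi \in \irr_{2'}(G)$, either $\chi$ lies in the principal series, or we are in the exceptional $C_n$ situation, and in that case \prettyref{thm:MS7.7} already hands us the Levi $L$ and cuspidal $\lambda$ (of degree $(q-1)/2$) whose Harish-Chandra series contains $\chi$, giving the second alternative of the statement once the index condition is checked. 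So the bulk of the argument is the principal-series case.

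First I would fix a maximally split torus $T$ and write $\chi = R_T^G(\lambda_0)_{\gamma_0}$ for some $\lambda_0 \in \irr(T)$ and $\gamma_0 \in \irr(W(\lambda_0))$ via Howlett--Lehrer (\prettyref{thm:GaloisAct} and the surrounding setup). The $W$-orbit of $\lambda_0$ has size $[W:W(\lambda_0)]$, and all $\lambda$ in this orbit give the same Harish-Chandra series, with $W(\lambda)$ conjugate (in $W$) to $W(\lambda_0)$ and the labels $\gamma$ transported accordingly; so it is harmless to replace $\lambda_0$ by any $W$-conjugate. Next I would compute $\chi(1)$ using the Howlett--Lehrer degree formula: $\chi(1) = [G:P] \cdot \lambda(1) \cdot (\gamma(1)/|W(\lambda)|) \cdot (\text{a polynomial in } q)$, or more transparently use the well-known fact that the $p'$-part of $R_T^G(\lambda)_\gamma(1)$ involves $[W:W(\lambda)]$ as a factor (this is where $2\nmid \chi(1)$ forces $2 \nmid [W:W(\lambda)]$ \emph{provided} we have already arranged that the remaining factors are odd). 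Concretely: the degrees of characters in the $\lambda$-series are $[G:L]_{p'} \cdot \lambda(1) \cdot d_\gamma(q)$ where $d_\gamma$ are the "fake degrees" / generic degrees of $\irr(W(\lambda))$, and one knows $\sum_\gamma \gamma(1) d_\gamma(q) = |W(\lambda)|$-type identities. The cleanest route is: since $\chi$ has odd degree and (by \prettyref{prop:unipsfixed}-type input, or directly by Malle--Späth) the relevant torus and Levi can be chosen so that $[G:T]_{2'}$ contributes the factor $[W:W(\lambda)]_2$, oddness of $\chi(1)$ forces $[W:W(\lambda)]$ to be odd. I would then cite the explicit analysis in the proof of \cite[Lemma 7.9]{MalleSpathMcKay2}, which already does essentially this for the statement that $\chi$ lies in \emph{a} principal series, and observe that the very same orbit-counting argument, once one selects $\lambda$ in the orbit of a character corresponding (under $\irr(T)\cong T^\ast$, \prettyref{lem:W(s)}) to a semisimple $2'$-element $s \in T^\ast$ of odd-index centralizer, yields the index condition; here \prettyref{lem:W(s)}(1) identifies $[W:W(\lambda)] = [W^\ast : W^\ast(s)] = [G^\ast : C_{G^\ast}(s)]_{p'}$-ish, tying oddness of the index to oddness of $|G^\ast : C_{G^\ast}(s)|$, a standard consequence of $s$ being $2$-central-ish.

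For the exceptional type $C_n$, $q \equiv 3 \pmod 4$ case I would argue analogously one level up: \prettyref{thm:MS7.7} gives $\chi$ in the Harish-Chandra series over $(L,\lambda)$ with $L = Sp_2(q)\times(q-1)^{n-1}$ and $\lambda$ cuspidal of degree $(q-1)/2$; the relative Weyl group $W(L)$ here is of type $C_{n-1}$-like, and one checks $[W(L):W(\lambda)]$ is odd by the same centralizer-index computation inside the dual group (now with $s$ such that $C_{G^\ast}(s) = B_{2k}(q)\cdot{}^2D_{n-2k}(q).2$), passing again through \prettyref{lem:W(s)}. I would also need $\gamma \in \irr_{2'}(W(\lambda))$: this follows because $\chi(1)$ odd and all the "external" factors ($[G:P]_{2'}$, $\lambda(1)_{2'}$, generic-degree factor) being odd forces the remaining factor $\gamma(1)$ to be odd.

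\textbf{Main obstacle.} The delicate point is not any single computation but the bookkeeping that simultaneously controls \emph{both} $[W:W(\lambda)]$ (or $[W(L):W(\lambda)]$) \emph{and} $\gamma(1)$ to be odd while matching the given odd $\chi(1)$ — i.e., showing that the parametrizing data $(\lambda,\gamma)$ of an odd-degree character can always be taken "odd on the nose" in both coordinates, and in particular that no odd-degree $\chi$ forces an even relative index that would only be compensated by an even $\gamma(1)$. This is exactly the content that must be extracted carefully from the degree formulas and the classification of semisimple classes of odd centralizer index in $G^\ast$ (using the dual-group picture of \prettyref{lem:W(s)}); the $C_n$, $q\equiv 3\pmod 4$ exception is precisely where the "clean" choice fails at torus level and one is forced to the second, Levi-series alternative, so isolating and handling that case is where the real work lies.
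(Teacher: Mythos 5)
There is an important mismatch of expectations here: the paper does not prove this lemma at all. It is quoted as an \emph{extension} of \cite[Lemma 7.9]{MalleSpathMcKay2}: the principal-series assertion is exactly Malle--Sp\"ath's Lemma 7.9, and the only "extension" is that the exceptional non-principal series in type $C_n$, $q\equiv 3\pmod 4$, supplied by \prettyref{thm:MS7.7} (= their Theorem 7.7), is covered by the same degree analysis applied to the relative Weyl group $W(L)$ of that cuspidal pair. So what you are reconstructing is Malle--Sp\"ath's argument, not something the paper re-proves; your overall strategy (split via \prettyref{thm:MS7.7}, then force oddness of the relative index and of $\gamma(1)$ from oddness of $\chi(1)$ via the Howlett--Lehrer degree formula) is indeed the right strategy and matches the source in spirit.

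As a proof, however, your write-up has a genuine gap, and it sits exactly where you yourself locate the "main obstacle": the 2-adic bookkeeping is asserted, not carried out. You never pin down the degree formula precisely enough to conclude that $2\nmid\chi(1)$ forces \emph{both} $2\nmid[W:W(\lambda)]$ and $2\nmid\gamma(1)$; the phrase "provided we have already arranged that the remaining factors are odd" is circular, and the claim that "$[G:T]_{2'}$ contributes the factor $[W:W(\lambda)]_2$" does not parse (a $2'$-part is odd by definition; what is needed is the $2$-part of the $p'$-index, controlled through the generic degree $D_\gamma(q)$ of the Hecke algebra of $W(\lambda)$). Likewise the proposed identification $[W:W(\lambda)]=[W^\ast:W^\ast(s)]=[G^\ast:C_{G^\ast}(s)]_{p'}$ is false as an equality: \prettyref{lem:W(s)} gives $W(\lambda)\cong W(s)$, but $[G^\ast:C_{G^\ast}(s)]_{p'}$ involves cyclotomic torus factors beyond the Weyl-group index, so "$2$-central-ish" does not substitute for the actual comparison of $2$-parts that Malle--Sp\"ath perform. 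Note also that in the principal-series case $\lambda(1)=1$ automatically (as $T$ is abelian), so the $\lambda(1)$ factor you track is vacuous there, while in the exceptional case the point is precisely that $\lambda(1)=(q-1)/2$ is odd when $q\equiv 3\pmod 4$ and that the same Hecke-algebra degree argument runs with $W(L)$ in place of $W$ --- again asserted but not checked. In short: correct skeleton, but the decisive verification is deferred rather than done, whereas the paper discharges it by citation to \cite{MalleSpathMcKay2}.
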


The following lemma will be useful for verifying that a rational Lusztig series of $G$ containing an odd-degree character is fixed by $\sigma$.
\begin{lemma}\label{lem:lambdafixseriesfix}
Let $\chi$ be a constituent of $R_T^G(\lambda)$ where $T$ is a maximally split torus of $G$ and $\lambda\in \irr(T)$.  If $\lambda^\sigma=\lambda$, then $\chi$ lies in a rational Lusztig series $\mathcal{E}(G,s)$ indexed by some $s\in G^\ast$ such that $\mathcal{E}(G,s)^\sigma=\mathcal{E}(G,s)$.
\end{lemma}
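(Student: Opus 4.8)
The plan is to track how $\sigma$ moves the constituents of $R_T^G(\lambda)$ and identify the Lusztig series in which they land. First I would recall that Deligne--Lusztig induction commutes with the action of $\sigma\in\gal(\Q_{|G|}/\Q)$, in the sense that $\left(R_T^G(\lambda)\right)^\sigma = R_T^G(\lambda^\sigma)$ up to the identification of characters with their Galois twists; this is essentially the compatibility recorded in \prettyref{sec:gal twist} applied to the coinduced module $\mathfrak{F}(\lambda)$, since $R_T^G(\lambda)=\ind_P^G(\lambda)$ in the principal series case. Hence, under the hypothesis $\lambda^\sigma=\lambda$, the virtual character $R_T^G(\lambda)$ is $\sigma$-fixed as a whole, and therefore $\sigma$ permutes the set of its irreducible constituents $\{R_T^G(\lambda)_\gamma : \gamma\in\irr(W(\lambda))\}$ among themselves. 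Indeed \prettyref{thm:GaloisAct} makes this precise: $\left(R_T^G(\lambda)_\gamma\right)^\sigma = R_T^G(\lambda^\sigma)_{\gamma'} = R_T^G(\lambda)_{\gamma'}$ for the explicitly described $\gamma'\in\irr(W(\lambda))$. So every constituent $\chi$ of $R_T^G(\lambda)$ has $\chi^\sigma$ again a constituent of $R_T^G(\lambda)$.

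Next I would invoke the correspondence between linear characters $\lambda\in\irr(T)$ and semisimple elements $s\in T^\ast\subseteq G^\ast$ of \cite[Proposition 4.4.1]{carter2}, under which the constituents of $R_T^G(\lambda)$ are precisely the characters in the (geometric, then rational) Lusztig series attached to $s$. More carefully: the constituents of $R_T^G(\lambda)$ all lie in the geometric series $\mathcal{E}(G,(s))$, and this $\lambda$ in fact picks out a single rational series $\mathcal{E}(G,s)$ — the one containing the unipotent-twisted constituents labelled via $W(\lambda)\cong W(s)$ as in \prettyref{lem:W(s)}(1). The key point is that $\sigma$ acting on $\irr(T)$ corresponds to an explicit action on $T^\ast$: namely $\sigma$ sends $\lambda$ to $\lambda^\sigma$, which corresponds to $s^{\,?}$ where the exponent is dictated by how $\sigma$ acts on roots of unity of order dividing $|T|$. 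Since $\lambda^\sigma=\lambda$, the corresponding semisimple element is $G^\ast$-conjugate to $s$, so the series $\mathcal{E}(G,s)$ is itself $\sigma$-stable: $\mathcal{E}(G,s)^\sigma=\mathcal{E}(G,s)$. Concretely, if $\chi\in\mathcal{E}(G,s)$, then $\chi^\sigma$ is a constituent of $\left(R_{T'}^G(\theta)\right)^\sigma=R_{T'}^G(\theta^\sigma)$ for a suitable maximal torus $T'$ and $\theta\in\irr(T')$ with $\theta\leftrightarrow s$ and $\theta^\sigma\leftrightarrow s$ as well (because the order of $\theta$ divides $|G|$ and $\sigma$ fixes whichever roots of unity it must), so $\chi^\sigma\in\mathcal{E}(G,s)$.

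The main obstacle I anticipate is being careful about the distinction between the \emph{geometric} conjugacy class of $s$ and the \emph{rational} series, and about the precise recipe relating the $\sigma$-action on $\irr(T)$ to an action on $T^\ast$. The cleanest route is probably: (i) show $R_T^G(\lambda)$ is $\sigma$-fixed as a virtual character (immediate from $\lambda^\sigma=\lambda$ and compatibility of Harish-Chandra/Deligne--Lusztig induction with Galois action); (ii) conclude that $\chi^\sigma$ is a constituent of $R_T^G(\lambda)$ for every constituent $\chi$; (iii) note that all constituents of $R_T^G(\lambda)$ lie in the single rational series $\mathcal{E}(G,s)$ determined by the $G^\ast$-class of $s$ with $\lambda\leftrightarrow s$ — here one uses that $\lambda$ is a character of the \emph{maximally split} torus, so the geometric class of $s$ meets $T^\ast$ in a single $W$-orbit and the associated rational series is unambiguous; (iv) therefore $\mathcal{E}(G,s)^\sigma$ contains $\chi^\sigma$, which lies in $\mathcal{E}(G,s)$, and since distinct rational Lusztig series are disjoint, $\mathcal{E}(G,s)^\sigma=\mathcal{E}(G,s)$. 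Steps (i)–(ii) are formal given the machinery already developed; step (iii) is the one requiring the most care, and I would lean on \cite[Proposition 4.4.1]{carter2} and \prettyref{lem:W(s)} together with the standard fact that the constituents of $R_T^G(\lambda)$ form exactly $\mathcal{E}(G,s)$ when $T$ is maximally split.
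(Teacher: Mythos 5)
Your overall scaffolding---$\chi^\sigma$ is again a constituent of $R_T^G(\lambda)$, all such constituents lie in $\mathcal{E}(G,s)$, and distinct rational series are disjoint---is close in spirit to the paper's argument, but the crucial step, namely \emph{why} the label $s$ is stable under the twist, is where your justification breaks down. You assert that for a pair $(T',\theta)$ attached to $s$ one has $\theta^\sigma\leftrightarrow s$ ``because the order of $\theta$ divides $|G|$ and $\sigma$ fixes whichever roots of unity it must.'' This is false: at this point of the paper $\sigma$ is the specific automorphism fixing $2$-power roots of unity and \emph{squaring} odd-order ones, so $\theta^\sigma$ corresponds to $s'_2(s'_{2'})^2$ (writing $s'=s'_2s'_{2'}$ for the $2$- and $2'$-parts of the element attached to $\theta$), not to $s'$ itself. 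The entire content of the lemma is that the hypothesis $\lambda^\sigma=\lambda$, read through $\irr(T)\cong T^\ast$, forces $s=s_2s_{2'}^2$ in $T^\ast$, hence $s$ is $G^\ast$-conjugate to $s_2s_{2'}^2$; one then needs the known description of the Galois action on rational series, $\mathcal{E}(G,s)^\sigma=\mathcal{E}(G,s_2s_{2'}^2)$ together with the criterion for when two rational series coincide (this is \cite[Lemma 3.4]{SFTaylorTypeA}, which the paper invokes), to conclude $\mathcal{E}(G,s)^\sigma=\mathcal{E}(G,s)$. Your step (iv) via disjointness would also work, but only after citing or proving that $\sigma$ permutes the rational Lusztig series according to such a recipe---that is exactly the missing input, not a formality, and knowing merely that the single character $\chi^\sigma$ lands back in $\mathcal{E}(G,s)$ does not by itself give stability of the whole series.

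A secondary inaccuracy: the ``standard fact'' you lean on in (iii), that the constituents of $R_T^G(\lambda)$ are \emph{exactly} $\mathcal{E}(G,s)$ when $T$ is maximally split, is wrong (take $\lambda=1$: $\mathcal{E}(G,1)$ contains cuspidal unipotent characters, which are not in the principal series). What is true, and all you need, is the containment: by \cite[11.10]{bonnafe06}, $\mathcal{E}(G,s)$ is a disjoint union of Harish-Chandra series $R_L^G(\mu)$ with $\mu\in\mathcal{E}(L,s)$, so the constituents of $R_T^G(\lambda)$ lie in $\mathcal{E}(G,s)$ with $\lambda\in\mathcal{E}(T,s)$. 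The paper's proof is shorter precisely because it works with the label rather than with individual constituents: $\lambda^\sigma=\lambda$ gives $\mathcal{E}(T,s)^\sigma=\mathcal{E}(T,s)$, i.e.\ $s$ is conjugate to $s_2s_{2'}^2$ in $T^\ast$ and hence in $G^\ast$, and then the cited lemma immediately yields $\mathcal{E}(G,s)^\sigma=\mathcal{E}(G,s)$.
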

\begin{proof}
Let $\chi\in\mathcal{E}(G,s)$.  By \cite[11.10]{bonnafe06}, we know $\mathcal{E}(G,s)$ is a disjoint union of Harish-Chandra series $R_L^G(\lambda)$ with $\lambda\in\mathcal{E}(L,s)$.  If $\chi$ is a member of $R_T^G(\lambda)$ with $\lambda^\sigma=\lambda$, this forces $\mathcal{E}(T,s)^\sigma=\mathcal{E}(T,s)$.  Writing $s=s_2s_{2'}$ where $s_2$ is a $2$-element and $s_{2'}$ is a $2'$-element, this means that $s$ is conjugate to $s_2s_{2'}^2$ in $T^\ast$, and hence $G^\ast$, by \cite[Lemma 3.4]{SFTaylorTypeA}.  This yields that $\mathcal{E}(G,s)^\sigma=\mathcal{E}(G,s)$, again by \cite[Lemma 3.4]{SFTaylorTypeA}.
\end{proof}

\subsubsection{On the Term $r_\sigma$}

Recall that the term $r_\sigma(w)$ appears in \prettyref{thm:GaloisAct}, where writing $w=w_1w_2\in W(\la)$ for $w_1\in C(\lambda)$ and $w_2\in R(\lambda)$, we have  $r_\sigma(w):={\sqrt{\ind(w_1)}^\sigma}/{\sqrt{\ind(w_1)}}$. For the purpose of the next lemma only, we relax our assumptions on $G$ and return to the more general setting of \prettyref{sec:gallact}, in the sense that $G$ is not necessarily a group of Lie type.  (However, $\sigma$ is still the Galois automorphism as in \prettyref{conj:mainprob}.)

Using Gauss sums, ${\sqrt{p}=\sum_{n=1}^{p-1}{\legendre{n}{p}}\zeta_p^{n}}$ or ${-\sqrt{-1}\sum_{n=1}^{p-1}{\legendre{n}{p}}\zeta_p^{n}}$ for an odd prime $p$, where ${\legendre{n}{p}}$ denotes the Legendre symbol and $\zeta_p$ is a primitive $p$th root of unity.  Then ${\sqrt{p}^\sigma=\sum_{n=1}^{p-1}{\legendre{n}{p}}\zeta_p^{2n}}$ or ${-\sqrt{-1}\sum_{n=1}^{p-1}{\legendre{n}{p}}\zeta_p^{2n}}$.  But note that ${\legendre{2n}{p}}={\legendre{2}{p}}{\legendre{n}{p}}$ and ${\legendre{2}{p}}=1$ if $p\equiv\pm1\mod 8$ and $-1$ if $p\equiv \pm3\mod 8$.   This proves the following lemma:

\begin{lemma}\label{lem:rwgen}
Let $w\in W(\lambda)$ and let $r_\sigma$ be defined as above.  Then
\begin{itemize}
\item If $\ind{(w_1)}\equiv\pm1\mod8$, then $r_\sigma(w)=1$.   
\item If $\ind(w_1)\equiv\pm3\mod8$, then $r_\sigma(w)=-1$.
\end{itemize}
\end{lemma}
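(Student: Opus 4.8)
The plan is to reduce the computation of $r_\sigma(w)$ to the effect of $\sigma$ on a quadratic Gauss sum. To begin, I would recall from \cite[Theorem 10.5.5]{carter2} that $\ind(w_1)$ is a power of the defining characteristic $p$, say $\ind(w_1)=p^k$ with $k\geq 0$, and recall the observation already made that $r_\sigma(w)=\sqrt{\ind(w_1)}^{\,\sigma}/\sqrt{\ind(w_1)}\in\{\pm 1\}$, since $\sqrt{\ind(w_1)}^{\,\sigma}$ must be a root of $t^2-\ind(w_1)$.

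I would then split on the parity of $k$. If $k$ is even, then $\sqrt{\ind(w_1)}=p^{k/2}\in\Q$ is fixed by $\sigma$, so $r_\sigma(w)=1$; and since the square of any odd integer is $\equiv 1\pmod 8$, we get $\ind(w_1)=p^k\equiv 1\pmod 8$, so the asserted equivalence holds in this case. If $k$ is odd, write $\sqrt{\ind(w_1)}=p^{(k-1)/2}\sqrt{p}$, so that $r_\sigma(w)=\sqrt{p}^{\,\sigma}/\sqrt{p}$, and note (using $p^2\equiv 1\pmod 8$ again) that $\ind(w_1)=p^k\equiv p\pmod 8$, so the congruence hypotheses on $\ind(w_1)$ amount to the corresponding congruences on $p$.

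The crux is then to evaluate $\sqrt{p}^{\,\sigma}$. I would invoke the classical value of the quadratic Gauss sum $g:=\sum_{n=1}^{p-1}\legendre{n}{p}\zeta_p^{n}$, which is $\sqrt{p}$ when $p\equiv 1\pmod 4$ and $i\sqrt{p}$ when $p\equiv 3\pmod 4$; in both cases $\sqrt{p}$ equals $g$ or $-ig$. Because $\sigma$ fixes $2$-roots of unity it fixes $i=\zeta_4$, and because $\sigma$ squares $2'$-roots of unity it sends $\zeta_p\mapsto\zeta_p^{2}$. Applying $\sigma$ to $g$ and reindexing the resulting sum via $n\mapsto 2n\pmod p$ yields $g^\sigma=\sum_{n=1}^{p-1}\legendre{n}{p}\zeta_p^{2n}=\legendre{2}{p}\,g$, using the multiplicativity $\legendre{2n}{p}=\legendre{2}{p}\legendre{n}{p}$ of the Legendre symbol; the factor of $i$ present when $p\equiv 3\pmod 4$ is $\sigma$-fixed and cancels, so $\sqrt{p}^{\,\sigma}=\legendre{2}{p}\sqrt{p}$, i.e.\ $r_\sigma(w)=\legendre{2}{p}$. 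The lemma then follows from the supplementary law $\legendre{2}{p}=(-1)^{(p^2-1)/8}$, namely $\legendre{2}{p}=1$ exactly when $p\equiv\pm 1\pmod 8$ and $\legendre{2}{p}=-1$ exactly when $p\equiv\pm 3\pmod 8$, combined with the congruence $\ind(w_1)\equiv p\pmod 8$ (resp.\ $\equiv 1$ when $k$ is even) established above.

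I do not anticipate a genuine obstacle here; the argument is essentially bookkeeping around the Gauss sum. The two points requiring a little care are making sure that the scalar relating $g$ to $\sqrt{p}$ (which is $1$ or $-i$) is genuinely fixed by $\sigma$ --- it is, being a power of $\zeta_4$ --- and correctly tracking the residue of $\ind(w_1)=p^k$ modulo $8$ according to the parity of $k$, so that the statement about $\ind(w_1)$ translates precisely into the statement about $p$ that the Gauss sum computation directly delivers.
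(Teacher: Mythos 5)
Your argument is correct and is essentially the paper's own proof: the paper also evaluates $\sqrt{p}^{\,\sigma}$ via the quadratic Gauss sum, using that $\sigma$ fixes $\sqrt{-1}$ and sends $\zeta_p\mapsto\zeta_p^2$, then applies the multiplicativity of the Legendre symbol and the supplementary law $\legendre{2}{p}=\pm1$ according to $p\equiv\pm1$ or $\pm3\pmod 8$. Your explicit case split on the parity of $k$ in $\ind(w_1)=p^k$ is just a slightly more careful writing-out of the same reduction, which the paper leaves implicit after noting that $\ind(w_1)$ is a power of $p$.
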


Now, and for the remainder of the article, we assume $G=\textbf{G}^F$ is a group of Lie type defined over $\F_q$.  By  \cite[Section 2.9]{carter2}, $\ind(w_1)=q^{\ell(w_1)}$, where $\ell(w_1)$ is the length of $w_1$ in the Weyl group of $\textbf{G}$.    This yields that $r_\sigma$ is a character in this case, and that we have the following reformulation of \prettyref{lem:rwgen} in the case of groups of Lie type:
\begin{lemma}\label{lem:rw}
Let $q$ be a power of an odd prime and let $G=\textbf{G}^F$ be a group of Lie type defined over $\F_q$.  For $w\in W(\lambda)$, \begin{itemize}
\item If $q\equiv\pm1\mod8$, then $r_\sigma(w)=1$.  
\item If $q\equiv\pm3\mod8$, then $r_\sigma(w)=(-1)^{\ell(w_1)}$, where $\ell(w_1)$ is the length of $w_1$ in the Weyl group of $\textbf{G}$.

\end{itemize}

 \end{lemma}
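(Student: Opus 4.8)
The plan is to substitute the explicit value of $\ind(w_1)$ into \prettyref{lem:rwgen} and carry out an elementary congruence computation modulo $8$. As recalled just above the statement, since $G=\textbf{G}^F$ is of Lie type defined over $\F_q$, \cite[Section 2.9]{carter2} gives $\ind(w_1)=q^{\ell(w_1)}$, where $\ell(w_1)$ denotes the length of $w_1$ in the Weyl group of $\textbf{G}$. Thus $\ind(w_1)$ is a power of the odd number $q$, and the value of $r_\sigma(w)$ is controlled entirely by the residue of $q^{\ell(w_1)}$ modulo $8$.

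First I would treat the case $q\equiv\pm1\pmod{8}$: here $q^{\ell(w_1)}\equiv(\pm1)^{\ell(w_1)}\equiv\pm1\pmod{8}$ irrespective of the parity of $\ell(w_1)$, so the first bullet of \prettyref{lem:rwgen} applies and yields $r_\sigma(w)=1$.

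For the case $q\equiv\pm3\pmod{8}$, I would use that $q^2\equiv1\pmod{8}$ for every odd $q$, so that $q^{\ell(w_1)}\equiv q^{\ell(w_1)\bmod 2}\pmod{8}$. If $\ell(w_1)$ is even then $q^{\ell(w_1)}\equiv1\pmod{8}$, and the first bullet of \prettyref{lem:rwgen} gives $r_\sigma(w)=1=(-1)^{\ell(w_1)}$; if $\ell(w_1)$ is odd then $q^{\ell(w_1)}\equiv q\equiv\pm3\pmod{8}$, and the second bullet gives $r_\sigma(w)=-1=(-1)^{\ell(w_1)}$. In both subcases $r_\sigma(w)=(-1)^{\ell(w_1)}$, which completes the proof.

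Every step is a routine modular arithmetic check, so there is no genuine obstacle here; the only point worth flagging is the (already noted) consequence that $r_\sigma(w)$ depends only on the $C(\lambda)$-component $w_1$ of $w$, and in fact only on the parity of $\ell(w_1)$ when $q\equiv\pm3\pmod 8$, so that $r_\sigma$ indeed factors through $W(\lambda)/R(\lambda)\cong C(\lambda)$ and defines a linear character of $W(\lambda)$.
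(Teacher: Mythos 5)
Your proposal is correct and matches the paper's argument: the paper likewise obtains Lemma \ref{lem:rw} by inserting the identity $\ind(w_1)=q^{\ell(w_1)}$ from \cite[Section 2.9]{carter2} into \prettyref{lem:rwgen} and reducing $q^{\ell(w_1)}$ modulo $8$, exactly as you do. Nothing is missing.
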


We next address the case of the values of $r_\sigma$ when $G=\textbf{G}^F$ is a group of Lie type $B_n$.

\begin{lemma}\label{lem:ClatypeB}
Let $G=\textbf{G}^F$ be of simply connected type, where $\textbf{G}$ is of type ${B}_n$ ($n\geq 3$).  Let $\chi\in\irr_{2'}(G)$, and further write $\chi=R_T^G(\lambda)_\gamma$ as in \prettyref{lem:MS7.9}.  Then every member of $C(\lambda)$ has even length in the Weyl group of $\textbf{G}$.
 \end{lemma}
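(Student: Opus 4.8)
The plan is to pass to the dual group and exploit that centralizers of semisimple elements there have very small component groups. Since $\textbf{G}$ is simply connected of type $B_n$ we have $\textbf{G}=Spin_{2n+1}$, and the dual group $\textbf{G}^\ast$ is the adjoint group of type $C_n$, namely $PSp_{2n}=Sp_{2n}/\{\pm 1\}$. First I would invoke \prettyref{lem:MS7.9} (the type-$B_n$ case carries no exception) to write $\chi=R_T^G(\lambda)_\gamma$ with $T$ maximally split, $2\nmid[W:W(\lambda)]$ and $\gamma\in\irr_{2'}(W(\lambda))$. Let $s\in T^\ast$ correspond to $\lambda$ as in \prettyref{lem:W(s)}, fix a preimage $\hat s\in Sp_{2n}$ of $s$, and recall that $B_n$ admits no nontrivial graph automorphism, so $F^\ast$ acts trivially on the Weyl groups in play. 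By \prettyref{lem:W(s)}(2) we then have $C(\lambda)\cong W(s)/W^\circ(s)\cong C_{\textbf{G}^\ast}(s)/C_{\textbf{G}^\ast}^\circ(s)$. Because centralizers of semisimple elements in the simply connected group $Sp_{2n}$ are connected, the preimage in $Sp_{2n}$ of $C_{\textbf{G}^\ast}(s)$ differs from $C_{Sp_{2n}}(\hat s)$ by a subgroup that injects into $Z(Sp_{2n})\cong\Z/2$; hence $C(\lambda)$ has order $1$ or $2$. If $C(\lambda)=1$ there is nothing to prove, so I assume $|C(\lambda)|=2$.

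Next I would pin down which $\lambda$ can actually occur. The component group is nontrivial exactly when $\hat s$ is $Sp_{2n}$-conjugate to $-\hat s$, i.e.\ the eigenvalue multiset of $\hat s$ on the natural module is stable under negation; in particular $1$ and $-1$ then occur with a common multiplicity $2a$. Writing $C_{Sp_{2n}}(\hat s)=Sp_{2a}\times Sp_{2a}\times\prod_j GL_{c_j}$ with $n=2a+\sum_j c_j$, we get $|W(\lambda)|=|W(s)|=2\,(2^a a!)^2\prod_j c_j!$. Imposing $2\nmid[W:W(\lambda)]$ and comparing $2$-adic valuations via Legendre's formula $v_2(m!)=m-s_2(m)$ (where $s_2$ denotes the binary digit sum) together with the subadditivity $s_2(x+y)\le s_2(x)+s_2(y)$ reduces the condition to an equality forcing $\sum_j c_j+s_2(a)\le 1$. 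The two solutions with $s_2(a)=0$ force $a=0$ and hence $n\le1$, which is excluded since $n\ge3$; so the only surviving case is $\sum_j c_j=0$ and $s_2(a)=1$. Thus $\hat s$ has $\pm1$ as its only eigenvalues, each of multiplicity $n$, and $a=n/2=2^e$ with $e\ge1$; in particular $a$ is even.

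Finally I would make $C(\lambda)$ explicit and conclude. In the surviving case $C_{\textbf{G}^\ast}^\circ(s)$ has root system $C_a\times C_a$, one factor on each of the two $n$-dimensional $\pm1$-eigenspaces of $\hat s$, so $W^\circ(s)=W(C_a)\times W(C_a)$ and the nontrivial coset of $W^\circ(s)$ in $W(s)$ is represented by the element $w_0$ of $W^\ast\cong W$ that interchanges the two coordinate blocks. Testing $w_0$ against the positive system that $\Phi^+$ induces on $\Phi_\lambda$ shows that the element of this coset lying in $C(\lambda)$ (the stabilizer of $\Delta_\lambda$) is the \emph{pure} block interchange: composing with any sign change would send a simple root of $\Phi_\lambda$ to a negative root and hence leave $C(\lambda)$. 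Transported into $W=W(B_n)$, this $w_0$ is a product of $a$ disjoint coordinate transpositions with no sign changes, so in the reflection representation of $W(B_n)$ it acts with determinant $(-1)^a$; since $(-1)^{\ell(w)}=\det(w\mid V)$ for Coxeter groups, $\ell(w_0)\equiv a\pmod2$. As $a$ is even, $w_0$—and hence every element of $C(\lambda)$—has even length.

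The step I expect to be most delicate is keeping the isogeny type straight: one must use the correct dual group $PSp_{2n}$ when applying \prettyref{lem:W(s)}(2), check that $F^\ast$ acts trivially on the component group $\Z/2$, and—most importantly—verify that the generator of $C(\lambda)$ really is the pure block swap rather than a block swap twisted by sign changes, since the latter could change the parity of the length. Once these points are settled, the $2$-adic counting in the second step and the determinant computation in the third are routine.
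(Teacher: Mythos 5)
Your argument is correct, and while it shares the paper's overall strategy (dualize via \prettyref{lem:W(s)}, exploit the oddness of $[W:W(\lambda)]$, then check the parity of a block swap), the execution is genuinely different. The paper stays on the $W=W(B_n)$ side: odd index forces $\overline{K}\cong C_2^n\leq W(\lambda)$, a regular embedding identifies $R(\lambda)$ with the stabilizer $W(\wt{\lambda})$ of an extension of $\lambda$ to a connected-centre torus, so $R(\lambda)\cong C_2\wr\mathfrak{S}_{n_1}\times C_2\wr\mathfrak{S}_{n_2}\times\mathfrak{S}_{n_3}$ with $n_3\leq 1$, and wreath-product combinatorics then rule out $n_3=1$ and force $n_1=n_2$ to be a $2$-power whenever $C(\lambda)\neq 1$. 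You instead work entirely with the dual semisimple element: the component-group bound $|C(\lambda)|\leq 2$ in $PSp_{2n}$, the eigenvalue criterion for nontriviality, and the explicit $2$-adic count via Legendre's formula and subadditivity of the binary digit sum, which isolates the single surviving configuration (eigenvalues $\pm 1$ of common multiplicity $n=2a$, $a=2^e\geq 2$); you then pin down the nontrivial element of $C(\lambda)$ as the pure block interchange and finish with $(-1)^{\ell(w)}=\det(w)$. Your route buys a sharper classification (no $GL_{c_j}$, hence no symmetric-group, factors can occur when $C(\lambda)\neq 1$) and a completely explicit generator of $C(\lambda)$; in particular your $\Delta_\lambda$-stabilization argument (short simple root must go to short simple root, and the chain of long simple roots then forbids any sign change) is exactly the justification implicitly needed for the paper's terse closing sentence that the block-swapping elements have even length. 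The paper's route, on the other hand, is set up so that the same wreath-product analysis is reused almost verbatim in the type $D_n$ case of \prettyref{prop:Clambdaeven}, whereas your dual-side computation would need to be redone there. The only points to tighten are ones you already flag: cite Steinberg's connectedness theorem for $C_{Sp_{2n}}(\hat{s})$ when bounding the component group, note that $F^\ast$ acts trivially on $\bg{W}^\ast$ (split type, no graph automorphism) so that $W(s)$ is the full geometric stabilizer, and record that $W(\lambda)=C(\lambda)\ltimes R(\lambda)$ places the nontrivial element of $C(\lambda)$ in the block-swapping coset before running the $\Delta_\lambda$ test.
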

\begin{proof}
Recall that by \prettyref{lem:MS7.9}, the index $[{W}:W(\lambda)]$ is odd, so that $\lambda$ is fixed by a Sylow $2$-subgroup of ${W}$.  Here $W$ can be viewed as a wreath product $C_2\wr\mathfrak{S}_n=\overline{K}\rtimes \mathfrak{S}_n$ with $\overline{K}\cong C_2^n$.  In particular, note that this means $\overline{K}$ is contained in $W(\la)$.  

Further, we fix a regular embedding $\bG{G}\hookrightarrow \wt{\bG{G}}$ as in \cite[Section 15.1]{cabanesenguehard}, so that $\wt{\bG{G}}$ has connected center, and write $\wt{G}=\wt{\bG{G}}^F$.  Recall that this induces a surjection $\wt{\bG{G}}^\ast\rightarrow\bG{G}^\ast$.  Let $\wt{\bG{T}}$ be an $F$-stable maximal torus of $\wt{\bG{G}}$ containing $\bG{T}$, let $\wt{\bG{T}}^\ast$ be dual to $\wt{\bG{T}}$, and denote by $\wt{T}=\wt{\bG{T}}^F$ and $\wt{T}^\ast=(\wt{\bG{T}}^\ast)^{F^\ast}$ the corresponding tori of $\wt{G}$ and $\wt{G}^\ast$.

Then we may extend $\lambda$ to a character $\wt{\lambda}$ of $\wt{T}$.  Let $s$ and $\wt{s}$ be semisimple elements in $T^\ast$ and $\wt{T}^\ast$, respectively, corresponding to $\lambda$ and $\wt{\la}$.  Then by \prettyref{lem:W(s)}, $R(\la)\cong W^\circ(s)\cong W(\wt{s})\cong W(\wt{\la})$ and $[W:R(\la)]_2\leq 2$.  This yields that $R(\la)$ must be a direct product of the form $C_2\wr \mathfrak{S}_{n_1}\times C_2\wr \mathfrak{S}_{n_2} \times \mathfrak{S}_{n_3}$, where $n=n_1+n_2+n_3$ and $n_3\leq 1$.  We also note that $C(\la)$ induces a graph automorphism on $R(\la)$, so $n_1=n_2$ if $C(\la)$ is nontrivial.

If $n_3=1$, we must have $[\mathfrak{S}_n:\mathfrak{S}_{n_1}\times \mathfrak{S}_{n_2}]$ is odd and $n_1+n_2=n-1$.  But this cannot occur if $n_1=n_2$, and hence in this case $R(\la)=W(\la)$.  However, note that this contradicts the assumption that $W(\la)$ contains $\overline{K}$.

Hence we see that $R(\la)$ is of the form $C_2\wr \mathfrak{S}_{n_1}\times C_2\wr \mathfrak{S}_{n_2}$ with $n_1+n_2=n$.  The fact that $[\mathfrak{S}_n:\mathfrak{S}_{n_1}\times \mathfrak{S}_{n_2}]_2\leq 2$ forces either $R(\lambda)=W(\lambda)$ or $n_1=n_2$ is a $2$-power.  (Recall that we exclude the case $n=2$.)  But this implies that the elements mapping $\mathfrak{S}_{n_1}$ to $\mathfrak{S}_{n_2}$ have even length, as desired.
\end{proof}

Next, we consider the case that $G$ is of type $D_n^\pm$.  For this, we recall the embedding of the simply connected group of type ${D}_n$ into ${B}_n$ discussed in \cite[2.C]{MalleSpathMcKay2} and \cite[Sections 10-12]{Spath10}.  

Write $\overline{\bf{G}}$ for the simple algebraic group of simply connected type over $\overline{\F}_q$ with root system $\overline{\Phi}=\{\pm e_i\pm e_j\}\cup \{\pm e_i\}$ of type $B_n$, where $\{e_1,...,e_n\}$ is an orthonormal basis for a Euclidean space.  The root system $\overline{\Phi}$ has simple roots $\overline{\Delta} = \{\overline{\alpha_1}, \alpha_2,...,\alpha_n\}$ with $\overline{\alpha}_1=e_1$ and $\alpha_i=e_i-e_{i-1}$ for $i>1$.  Then there is a root subsystem $\Phi\subseteq\overline{\Phi}$ of type $D_n$ comprised of all the long roots, with simple roots $\Delta = \{\alpha_1,...,\alpha_n\}$, where $\alpha_1=2\overline{\alpha}_1+\alpha_2$.  We write $\textbf{G}$ for the simply connected group corresponding to $\Phi$.  Letting $\textbf{T}$ and $\overline{\textbf{T}}$ be the corresponding maximally split tori and $\textbf{N}$ and $\overline{\textbf{N}}$ their normalizers in $\textbf{G}$ and $\overline{\textbf{G}}$, respectively, we have $\textbf{T}=\overline{\textbf{T}}$ and $\textbf{N}=\overline{\textbf{N}}\cap\textbf{G}$.  Note that then $T=\textbf{T}^F$ and $\overline{T}=\overline{\textbf{T}}^F$ coincide.  We write $\bG{W}$ and $\overline{\bG{W}}$ for the corresponding Weyl groups and $W=\bG{W}^F, \overline{W}=\overline{\bG{W}}^F$ for the Weyl groups of $G$ and $\overline{G}$, so that  $\bG{W}\leq \overline{\bG{W}}$ has index $2$.  

Note that $W\cong\bG{W}$ and $\overline{W}\cong\overline{\bG{W}}$ if $F$ is a split Frobenius endomorphism.  If $F$ is twisted, then $W\cong \overline{\bG{W}}'$, where $\overline{\bG{W}}'$ is a Weyl group of a root system of type $B_{n-1}$. For $\lambda\in\irr(T)$, we write $W(\lambda)=C(\lambda)\ltimes R(\lambda)$ and $\overline{W}(\lambda)=\overline{C}(\lambda)\ltimes \overline{R}(\lambda)$ for the decompositions of the corresponding relative inertia groups of $\lambda$ in $W$ and $\overline{W}$, respectively, as in \prettyref{sec:gallact}. 

Now, note that we may realize $\overline{\bG{W}}$ as the wreath product $C_2\wr \mathfrak{S}_n=\overline{K}\rtimes \mathfrak{S}_n$ with $\overline{K}=\overline{K}_1\times\cdots\times \overline{K}_n$ for $\overline{K}_i\cong C_2$.  Then ${\bG{W}}=K\rtimes \mathfrak{S}_n$, where $K$ is the subgroup of $\overline{K}$ of index two consisting of elements $(\epsilon_1,\ldots,\epsilon_n)$ for $\epsilon_i\in \{\pm1\}$ satisfying $\prod_{i=1}^n \epsilon_i=1$.  We write $k_i$ for the generator of $\overline{K}_i$, which can be seen to be induced by the element $n_{e_i}(1)$ of $\overline{\bf{N}}$, or $s_{e_i}$ in $\overline{\bG{W}}$.  

\begin{proposition}\label{prop:Clambdaeven}
Let $G=\textbf{G}^F$ be of simply connected type, where $\textbf{G}$ is of type ${B}_n$ ($n\geq 3$) or ${D}_n$ ($n\geq 4$) and $G\neq \tw{3}{D_4}(q)$.  Let $\chi\in\irr_{2'}(G)$, and further write $\chi=R_T^G(\lambda)_\gamma$ as in \prettyref{lem:MS7.9}.  Then every member of $C(\lambda)$ has even length in the Weyl group of $\textbf{G}$.
 \end{proposition}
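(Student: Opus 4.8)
The type $B_n$ case is exactly \prettyref{lem:ClatypeB}, so I would reduce immediately to the case that $\bG{G}$ is of simply connected type $D_n$ with $n\geq 4$ and $G\neq\tw{3}{D_4}(q)$; then $\bG{G}^\ast$ is adjoint of type $D_n$, and $G$ is either split (a member of the family $D_n(q)$) or $\tw{2}{D_n}(q)$. Recall from the setup above that $\bG{W}=K\rtimes\mathfrak{S}_n$ sits inside $\overline{\bG{W}}=\overline{K}\rtimes\mathfrak{S}_n$ with $K\leq\overline{K}\cong C_2^n$ of index $2$; when $F$ is split, $W=\bG{W}$, and when $F$ is twisted, $W\cong\overline{\bG{W}}'$ is a Weyl group of type $B_{n-1}$ with $n-1\geq 3$. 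By \prettyref{lem:MS7.9}---we are not in the exceptional type-$C_n$ case---we may write $\chi=R_T^G(\lambda)_\gamma$ with $2\nmid[W:W(\lambda)]$, so that $W(\lambda)$ contains a full Sylow $2$-subgroup of $W$; in particular, in the split case the normal $2$-subgroup $K$ lies in $W(\lambda)$, and in the twisted case the normal elementary abelian $2$-subgroup of $\overline{\bG{W}}'$ does.

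Next, mimicking the proof of \prettyref{lem:ClatypeB}, I would use the regular embedding $\bG{G}\hookrightarrow\wt{\bG{G}}$, extend $\lambda$ to $\wt{\lambda}\in\irr(\wt{T})$, and pick semisimple elements $s\in T^\ast$, $\wt{s}\in\wt{T}^\ast$ corresponding to $\lambda$, $\wt{\lambda}$. Then \prettyref{lem:W(s)} (2) gives $R(\lambda)\cong W^\circ(s)\cong W(\wt{s})\cong W(\wt{\lambda})$ and identifies $C(\lambda)\cong W(\lambda)/R(\lambda)$ with $(C_{\bG{G}^\ast}(s)/C_{\bG{G}^\ast}^\circ(s))^{F^\ast}$, a $2$-group of order at most $|Z(\bG{G})|=4$; combined with $2\nmid[W:W(\lambda)]$, this gives $[W:R(\lambda)]_2\leq 4$. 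Since $R(\lambda)$ is a reflection subgroup of $W$, it is a direct product of (possibly $F^\ast$-twisted) classical Weyl groups of type $A$ and type $D$, and $C(\lambda)$ acts on $R(\lambda)$ by permuting isomorphic factors and inducing diagram automorphisms. The target statement is that every element of $C(\lambda)$ has even length in $\bG{W}$; since $(-1)^{\ell(w)}=\det(w)$ on the reflection representation, this means $\det(w)=+1$, a property insensitive to whether one computes in the type-$B_n$ or type-$D_n$ realization of the ambient group, and which holds for instance for every element of $K$, as $K$ is generated by the elements $k_ik_j=s_{e_i+e_j}s_{e_i-e_j}$ of determinant $+1$.

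The verification then proceeds case by case on the shape of $R(\lambda)$ and the action of $C(\lambda)$: a swap of two isomorphic type-$D_m$ blocks is a product of $m$ commuting transposition-type reflections; a diagram automorphism of a type-$D_m$ block interchanging its fork nodes $s_{e_{m-1}-e_m}$ and $s_{e_{m-1}+e_m}$ is induced by a coordinate sign change which, combined with a second sign change to lie in $\bG{W}$, has determinant $+1$; and type-$A$ diagram automorphisms are inner on the relevant Weyl group and so impose nothing on $C(\lambda)$. The step I expect to be the main obstacle is making this enumeration complete and, crucially, ruling out the configurations that the weaker index bound permits but which \emph{would} produce odd-length elements---for example $R(\lambda)\cong D_m\times D_m$ with $m$ odd, whose block swap has odd length, is allowed by $[W:R(\lambda)]_2\leq 4$ but must be excluded using $K\leq W(\lambda)$ via a rank count ($K\cong C_2^{n-1}$ cannot embed into $C(\lambda)\ltimes R(\lambda)$ when $R(\lambda)$ offers too few ``sign-change'' directions). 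Carrying out this rank bookkeeping uniformly over all shapes of $R(\lambda)$---including the $F^\ast$-twisted blocks occurring for $\tw{2}{D_n}(q)$---and dealing with any elements of $C(\lambda)$ that centralize $R(\lambda)$ while acting nontrivially on $T$ is where the real work lies; for the latter I would realize the generators of $C(\lambda)$ explicitly in $\bG{W}$ via the Chevalley generators from \prettyref{sec:oddchars} and check directly, which is feasible since $|C(\lambda)|\leq 4$.
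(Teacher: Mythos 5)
Your proposal is a strategy outline rather than a proof: the step you yourself flag as ``where the real work lies'' is precisely the content of the proposition, and it is not carried out. Concretely, three things are missing. First, the enumeration of the possible shapes of $R(\lambda)$ compatible with $2\nmid[W:W(\lambda)]$, and the exclusion of the configurations admitting odd-length elements, is only gestured at; the bound $[W:R(\lambda)]_2\leq 4$ coming from $|Z(\bG{G})|=4$ is too weak on its own, and the promised ``rank bookkeeping'' with $K\leq W(\lambda)$ is exactly the argument that needs to be written down. Second, the parity of $\ell(c)$ for $c\in C(\lambda)$ is $\det(c)$ computed on the full $n$-dimensional reflection representation, and this is \emph{not} determined by the automorphism $c$ induces on $R(\lambda)$: two elements inducing the same automorphism differ by an element of $C_W(R(\lambda))$, which can have determinant $-1$. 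So statements like ``type-$A$ diagram automorphisms are inner and so impose nothing on $C(\lambda)$'' do not control the length of the actual element of $W$ lying in $C(\lambda)$ (recall $C(\lambda)$ is the concrete stabilizer of $\Delta_\lambda$ in $W(\lambda)$, not an abstract quotient). Third, the twisted groups $\tw{2}{D}_n(q)$ are not treated at all; they require a genuinely separate argument, since there $W$ is of type $B_{n-1}$ and one must relate $\lambda$ on $T\cong\F_{q^2}^\times\times(\F_q^\times)^{n-2}$ to a character $\lambda'$ of the subtorus $T'$, and track that the relevant coset representative $s_{e_2}$ corresponds to the even-length element $s_{\alpha_1}s_{\alpha_2}$ of $\bG{W}$.

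For comparison, the paper avoids an open-ended case analysis by embedding the type-$D_n$ data into the type-$B_n$ group $\overline{\bG{W}}=C_2\wr\mathfrak{S}_n$: in the split case it uses \cite[Lemmas 11.3 and 11.5]{Spath10} to get $W(\lambda)=\overline{W}(\lambda)\cap W$ and to force $[\overline{W}:\overline{W}(\lambda)]$ odd, then quotes the structure already established in the proof of \prettyref{lem:ClatypeB} for $\overline{R}(\lambda)$ (namely $\overline{R}(\lambda)=\overline{W}(\lambda)$ or $(C_2\wr\mathfrak{S}_{2^a})^2$ with $n=2^{a+1}$) and intersects with $W$; in the twisted case it identifies $W$ with a type-$B_{n-1}$ Weyl group inside $\overline{\bG{W}}$, passes to $\lambda'\in\irr(T'|\lambda)$, and again invokes the $B$-type analysis before converting back via $s_{e_2}\mapsto s_{\alpha_1}s_{\alpha_2}$. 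Your Lusztig-series/centralizer route could in principle be made to work, but as written it defers the decisive combinatorial and structural steps, so it does not yet constitute a proof.
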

\begin{proof}

For type $B_n$, this is just \prettyref{lem:ClatypeB}, so we assume $\textbf{G}$ is of type ${D}_n$.  

(1) Suppose first that $F$ is a split Frobenius endomorphism, so that $G$ is untwisted.  Note that since $T$ is maximally split, we may apply \cite[Lemma 11.3]{Spath10} in the case $\nu=1=\nu'$ to see that $W(\lambda)=\overline{W}(\lambda)\cap W$, and hence $[\overline{W}(\lambda):W(\lambda)]$ divides $2$.  Further, by \cite[Lemma 11.5]{Spath10}, $\overline{W}(\lambda)$ contains a normal subgroup of index dividing $2$ of the form $(A_1\times\cdots\times A_n)\rtimes U$ where $A_i\leq \overline{K}_i$ for each $i$ and $U$ is a product of symmetric groups. 

If $W(\lambda)=\overline{W}(\lambda)$, then since $[W:W(\lambda)]=[W:\overline{W}(\lambda)]$ is odd, we know $\overline{W}(\lambda)$ contains $K$.  Then $\overline{W}(\lambda)$ does not contain $k_i$ for any $i$, since then $\overline{W}(\lambda)$ would contain $\overline{K}$, contradicting $\overline{W}(\lambda)\leq W$.  It follows that each $A_i$ is trivial, so $\overline{W}(\lambda)$ contains $U$ with index at most $2$, contradicting $K\leq \overline{W}(\lambda)$. 

Hence we see $W(\lambda)$ has index $2$ in $\overline{W}(\lambda)$, so $\overline{W}=\overline{W}(\lambda)W$ and $[\overline{W}:\overline{W}(\lambda)]=[W:W(\lambda)]$ is odd, so $\overline{K}\leq \overline{W}(\lambda)$.  Further, using \cite[Lemma 5.1]{MalleSpathMcKay2}, we see that $\Phi_\lambda=\overline{\Phi}_\lambda\cap\Phi$.  This yields that $R(\lambda)=\overline{R}(\lambda)\cap W$, so $[\overline{R}(\lambda):R(\lambda)]$ divides $2$.  

Now, the proof of \prettyref{lem:ClatypeB} yields that $\overline{R}(\lambda)=\overline{W}(\lambda)$ or $\overline{R}(\lambda)\cong (C_2\wr\mathfrak{S}_{2^a})^2 $ where $n=2^{a+1}$ and $[\overline{W}(\lambda):\overline{R}(\lambda)]=2$.   This yields that $R(\lambda)=W(\lambda)$, unless $\overline{R}(\lambda)$ is as in the second case and $[\overline{R}(\lambda):R(\lambda)]=2$.  In this case, notice that $\overline{R}(\lambda)$ contains $\overline{K}$, so $R(\lambda)$ contains $\overline{K}\cap W=K$.  Then $R(\lambda)=K\rtimes (\mathfrak{S}_{2^a}\times\mathfrak{S}_{2^a})$, and $C(\lambda)=\overline{C}(\lambda)$ again consists of elements of even length.

(2) Finally, suppose that $F$ is twisted, so $W\cong \overline{\bG{W}}'$.  It will be useful to identify $W$ in two ways.  First, as a subgroup of $\bG{W}$, $W$ is generated by $s_2':=s_{\alpha_1}s_{\alpha_2}$ and $s_{\alpha_i}$ for $i\geq 3$.  Note that the generator $s_2'$ has even length in $\bG{W}$ and that it induces the graph automorphism on a subgroup of $W$ isomorphic to a Weyl group of type $D_{n-1}$.  Now, the map $s_2'\mapsto s_{e_2}; s_{\alpha_i}\mapsto s_{\alpha_i}, i\geq 3$ defines an isomorphism between $W$ and a Weyl group $\overline{\bG{W}}'$ of type $B_{n-1}$ with simple roots $\{\frac{1}{2}(\alpha_1+\alpha_2), \alpha_3,\ldots,\alpha_n\}=\{e_2,\alpha_3,\ldots,\alpha_n\}$.  We will write $\overline{\Phi}'$ for the corresponding root system, which is a subsystem of $\overline{\Phi}$.  Since $\overline{\bG{G}}$ is simply connected, we may apply \cite[Theorem 1.12.5]{gorensteinlyonssolomonIII} to $\overline{\bG{G}}':=\langle x_{\alpha}(t)\mid\alpha\in\overline{\Phi}'; t\in\overline{\F}_q\rangle$ and $\overline{\bG{T}}':=\langle h_\alpha(t)\mid\alpha\in\overline{\Phi}'; t\in\overline{\F}_q^\times\rangle$ to see that $\overline{\bG{G}}'$ is simply connected of type ${B}_{n-1}$.

Now, note that $T=\overline{\bG{T}}^F\cong \F_{q^2}^\times \times (\F_q^\times)^{n-2}$ is the set of elements of $\overline{\bG{T}}$ of the form $\prod_{i=1}^n h_{\alpha_i}(t_i)$ where $t_1, t_2\in\F_{q^2}^\times$ and $t_i\in\F_q^\times$ for $i\geq 3$, and $t_2=t_1^q$.  The subgroup $T'=\overline{\bG{T}}'^F\cong (\F_q^\times)^{n-1}$ of $T$ is the set of elements of $\overline{\bG{T}}$ of the form $\prod_{i=1}^n h_{\alpha_i}(t_i)$ where $t_i\in\F_q^\times$ for $i\geq 1$, and $t_2=t_1$.  Further, writing $N'=\overline{\bG{N}}'^F$ for $\overline{\bG{N}}'=\langle n_{\alpha}(t)\mid\alpha\in\overline{\Phi}'; t\in\overline{\F}_q^\times\rangle$, we have $N'T=N$ and $N'\cap T=T'$.  Then letting $\lambda'\in\irr(T'|\lambda)$, we have $W(\lambda)=N_{\lambda}/T$ is isomorphic to a subgroup of $\overline{W'} (\lambda')=N_{\lambda'}'/T'$, with odd index since $[W:W(\lambda)]$ is odd.  Writing $\overline{W}'(\lambda')=\overline{R}'(\lambda')\rtimes \overline{C}'(\lambda')$ for the corresponding decomposition, since $[\overline{\bG{W}}':\overline{W'}(\lambda')]$ is odd, we may again apply the proof of \prettyref{lem:ClatypeB} to see that $\overline{R'}(\lambda')$ is of the form $C_2\wr \mathfrak{S}_{n_1}\times C_2\wr \mathfrak{S}_{n_2}$ with $n_1+n_2=n-1$.  In particular, if we write $\overline{\bG{W}}$ as the wreath product $C_2\wr \mathfrak{S}_{n-1}=\overline{K}'\rtimes \mathfrak{S}_{n-1}$ in analogy to $\overline{\bG{W}}$, we see that $\overline{K}\leq \overline{R}'(\lambda')$ and $\overline{C}'(\lambda')$ is comprised of elements of even length.

Recall that $\overline{R}'(\lambda')=\langle s_{\alpha}\mid\alpha\in\overline{\Phi}'_{\lambda'}\rangle$, and viewing $W(\lambda)$ as a subgroup of $\overline{W}'(\lambda')$, we have $R(\lambda)=\langle s_{\alpha}\mid\alpha\in\overline{\Phi}'_{\lambda}\rangle$.  Note that by \cite[Lemma 5.1]{MalleSpathMcKay2}, $\overline{\Phi}'_\lambda=\{\alpha\in\overline{\Phi}'\mid\lambda(T\cap \langle X_\alpha, X_{-\alpha}\rangle) = 1\}$ and $\overline{\Phi}'_{\lambda'}=\{\alpha\in\overline{\Phi}'\mid\lambda'(T'\cap \langle X_\alpha, X_{-\alpha}\rangle) = 1\}$, so that $R(\lambda)$ is a subgroup of $\overline{R}'(\lambda')$.

Since $s_{e_2}\in \overline{R}'(\lambda')$, we see that either $s_{e_2}\in R(\lambda)$, in which case $\lambda$ is trivial on the $\F_{q^2}^\times$ factor of $T$, so $R(\lambda)=\overline{R}'(\lambda')$, or $s_{e_2}$ induces the quotient $\overline{R}'(\lambda')/R(\lambda)$.  Recalling that as a member of $\bG{W}$, $s_{e_2}$ corresponds to the element $s_{\alpha_1}s_{\alpha_2}$, we therefore see that the members of $C(\lambda)$ have even length in $\bG{W}$.
\end{proof}

\subsubsection{On the Character $\gamma^{(\sigma)}$}

\prettyref{thm:GaloisAct} and \prettyref{lem:MS7.9} also suggest that another important ingredient for SN2S-Goodness will be to understand the behavior of $\irr_{2'}(W(\la))$ under the action of $\sigma$.

\begin{proposition}\label{lem:W(la)}
Let ${G}$ be a group of Lie type with odd defining characteristic and let $\lambda\in\irr(T)$ such that $C(\lambda)$ is a $2$-group.  Then every $\gamma\in\irr_{2'}(W(\la))$ is fixed by $\sigma$.   
\end{proposition}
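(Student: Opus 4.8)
The plan is to reduce to the normal subgroup $R(\la)$ by Clifford theory and then to remove the resulting ambiguity with a determinant computation. I would use throughout that $R(\la)\lhd W(\la)$ with $W(\la)/R(\la)\cong C(\la)$ a $2$-group, and that $R(\la)$ is a Weyl group, so that every irreducible character of $R(\la)$ is rational-valued and, in particular, has determinant of order dividing $2$.

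So fix $\gamma\in\irr_{2'}(W(\la))$. Since $[W(\la):R(\la)]=|C(\la)|$ is a power of $2$ while $\gamma(1)$ is odd, Clifford theory forces $\psi:=\gamma|_{R(\la)}$ to be irreducible and $W(\la)$-invariant, of odd degree $\psi(1)=\gamma(1)$; thus $\gamma$ is an extension of $\psi$. Because $\psi$ is rational we have $\gamma^\sigma|_{R(\la)}=\psi^\sigma=\psi$, so $\gamma^\sigma$ is again an extension of $\psi$, and Gallagher's theorem (see \cite[Corollary 6.17]{isaacs}) produces a linear character $\beta$ of $W(\la)/R(\la)\cong C(\la)$, inflated to $W(\la)$, with $\gamma^\sigma=\beta\gamma$. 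Since $C(\la)$ is a $2$-group, $\beta$ has $2$-power order; in particular $\gcd(o(\beta),\gamma(1))=1$, and it remains only to show $\beta=1$.

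For that I would pass to determinants. The linear character $\det\gamma$ restricts on $R(\la)$ to $\det\psi$, which has order dividing $2$, so $(\det\gamma)^2$ is trivial on $R(\la)$, that is, inflated from the $2$-group $C(\la)$; hence $\det\gamma$ has $2$-power order and therefore takes values in $2$-power roots of unity, so $(\det\gamma)^\sigma=\det\gamma$. Taking determinants in $\gamma^\sigma=\beta\gamma$ gives $\det\gamma=(\det\gamma)^\sigma=\det(\beta\gamma)=\beta^{\gamma(1)}\det\gamma$, whence $\beta^{\gamma(1)}=1$; combined with $\gcd(o(\beta),\gamma(1))=1$ this yields $\beta=1$, that is, $\gamma^\sigma=\gamma$. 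The substantive input is concentrated in the first reduction: one needs $R(\la)$ to be an honest Weyl group (so its characters are rational and their determinants have $2$-power order) and $[W(\la):R(\la)]$ to be a $2$-power (so $\gamma|_{R(\la)}$ is irreducible and $\beta$ is a $2$-element), both available here since $\lambda\in\irr(T)$ and $C(\la)$ is a $2$-group. Granting those structural facts, the determinant manipulation is routine, so I do not anticipate a serious obstacle beyond correctly invoking Gallagher's theorem and the classical rationality of Weyl-group characters.
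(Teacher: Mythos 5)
Your argument is correct, and its first half coincides with the paper's: the Clifford-theoretic reduction showing that $\gamma|_{R(\la)}$ is irreducible (because $[W(\la):R(\la)]=|C(\la)|$ is a $2$-power while $\gamma(1)$ is odd), followed by rationality of the characters of the Weyl group $R(\la)$, is exactly how the paper begins. Where you diverge is in disposing of the remaining extension ambiguity: at that point the paper simply invokes \cite[Lemma 3.4]{SchaefferFrySN2S1}, remarking that $C(\la)$ is abelian, whereas you settle it internally via Gallagher's theorem and the determinant computation — $(\det\gamma)^2$ is trivial on $R(\la)$ since $\det\psi$ is rational-valued of order at most $2$, so $\det\gamma$ has $2$-power order and is $\sigma$-fixed, whence $\beta^{\gamma(1)}=1$ and $\beta=1$. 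Your route buys two things: it is self-contained, and it uses only the stated hypothesis that $C(\la)$ is a $2$-group rather than abelianness of $C(\la)$ (a property the paper's proof quietly relies on through the cited lemma, and which holds in the situations where the proposition is applied, but is not among the proposition's hypotheses). The paper's route is shorter on the page at the cost of importing the external lemma. The individual steps of your argument all check out, including that a rational-valued character has rational-valued determinant and that $\det(\gamma^\sigma)=(\det\gamma)^\sigma$, and Gallagher's constituent $\beta$ is automatically linear by degree comparison, so no commutativity of $C(\la)$ is needed there either.
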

\begin{proof}
Let $\gamma\in\irr_{2'}(W(\la))$.  Recall that $W(\la)=R(\la)\rtimes C(\la)$ and that $R(\la)$ is a Weyl group.  By Clifford theory, $\gamma|_{R(\la)}=e\sum_{i=1}^t\theta^{g_i}$ for some $\theta\in\irr(R(\la))$ with $e, t$ dividing $|C(\la)|$.  Then since $\gamma$ has odd degree and $C(\la)$ is a $2$-group, we see that $e=t=1$ and $\gamma|_{R(\la)}=\theta\in\irr_{2'}(R(\la))$.  Now, since $R(\la)$ is a Weyl group, $\theta$ takes values in the rational numbers (see, for example, \cite[Theorems 5.3.8, 5.4.5, 5.5.6, and Corollary 5.6.4]{GeckPfeiffer}), so recalling that $C(\la)$ is abelian, we see by \cite[Lemma 3.4]{SchaefferFrySN2S1} that $\gamma$ is fixed by $\sigma$.  
\end{proof}

Due to the nature of the action of $\sigma$ on the parametrization $R_T^G(\lambda)_\gamma$ given in \prettyref{thm:GaloisAct}, what we really hope for is a corresponding statement to \prettyref{lem:W(la)} but for the character $\eta\in\End_G(\fl{\la})$ satisfying $\mathfrak{f}(\eta)=\gamma$.  We make progress toward this end with the next two observations.  Here we keep the notation of \prettyref{sec:genericsubalg}.

\begin{proposition}\label{prop:fixHf}
Let $\lambda\in\irr(T)$ such that $C(\lambda)$ is a $2$-group.  Let $\mathbb{K}$ denote the fixed field of $\sigma$, and suppose that every simple $\mathcal{H}_0^f$-module is afforded over $\mathbb{K}$.  Then $\eta^{(\sigma)}=\eta$ for every irreducible character $\eta$ of $\End_G(\fl{\la})$ of odd degree.  In particular, this is true for $\eta$ satisfying $\mathfrak{f}(\eta)=\gamma$ for $\gamma\in\irr_{2'}(W(\lambda))$.
\end{proposition}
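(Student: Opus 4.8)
The plan is to first settle the ``in particular'' clause and recast $\eta^{(\sigma)}=\eta$ as a rationality statement, then run a Clifford‑theoretic reduction to $\mathcal H_0^f$ parallel to the proof of \prettyref{lem:W(la)}, and finally to descend the resulting extension to $\K$ using that $C(\lambda)$ is a $2$-group. We are in the situation $L=T$ of \prettyref{sec:genericsubalg}. The specializations of the generic algebra identify $\irr(\e{\la})$, $\irr(\mathcal H^f)$, and, through $\mathcal H^K$, $\irr(W(\la))$ in a way that preserves dimensions of simple modules, so $\eta$ and $\mathfrak f(\eta)$ have equal degrees; in particular $\eta\in\irr(\e{\la})$ has odd degree if and only if $\mathfrak f(\eta)=\gamma\in\irr_{2'}(W(\la))$, which yields the last sentence once the first is proved. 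So fix $\eta$ of odd degree. Since $\{T_{w,\la}\colon w\in W(\la)\}$ is an $\F$-basis of $\e{\la}$ and $\eta^{(\sigma)}(T_{w,\la})=\eta(T_{w,\la})^\sigma$ by construction, the identity $\eta^{(\sigma)}=\eta$ is equivalent to $\eta(T_{w,\la})\in\K$ for all $w$; that is, we must show that $\eta$, viewed as a character of the $\Q$-algebra $\mathcal H^f\cong\e{\la}$ in the basis $\{1\otimes a_w\}$, is fixed by $\sigma$.

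Next, set $\gamma:=\mathfrak f(\eta)\in\irr_{2'}(W(\la))$. Exactly as in the proof of \prettyref{lem:W(la)}, since $W(\la)=R(\la)\rtimes C(\la)$ with $C(\la)$ a $2$-group and $\gamma(1)$ odd, $\theta:=\gamma|_{R(\la)}$ is irreducible and $C(\la)$-invariant, $\theta\in\irr_{2'}(R(\la))$ is rational‑valued (as $R(\la)$ is a Weyl group), and $\gamma$ is an extension of $\theta$. Transporting this through the generic algebra and the Howlett--Kilmoyer description of \prettyref{sec:genericsubalg}, we get $\eta=\tau^{\mathcal H^f}$, where $\psi\in\irr(\mathcal H_0^f)$ is the Howlett--Lehrer correspondent of $\theta$ (so $\psi$ has odd degree and $C(\la)_\psi=C(\la)$), $\tau$ is an extension of $\psi$ to the subalgebra $C(\la)\mathcal H_0^f$, and comparing degrees forces $\eta|_{\mathcal H_0^f}=\psi$. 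By hypothesis $\psi$ is afforded over $\K$.

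It remains to check that the extension $\tau$ has all character values in $\K$; then, applying $\sigma$ to the rational averaging formula $\eta(1\otimes a_w)=|C(\la)|^{-1}\sum_{c\in C(\la)}\tau(1\otimes a_{cwc^{-1}})$ of \prettyref{sec:genericsubalg}, we get $\eta(1\otimes a_w)\in\K$ for all $w$, as required. First, $C(\la)\mathcal H_0^f$ equals $\mathcal H^f$ (dimensions match) and, by Geck's triviality of the Howlett--Lehrer $2$-cocycle, it is a genuine skew group algebra $\K C(\la)\ltimes\mathcal H_0^f$, defined over $\Q$ in the basis $\{1\otimes a_w\}$; moreover, by \prettyref{prop:Clambdaeven} every element of $C(\la)$ has even length in the ambient Weyl group, so $\sqrt{\ind(w_1)}=q^{\ell(w_1)/2}\in\Q$ for $w_1\in C(\la)$ and the subalgebra $\langle T_{w,\la}\colon w\in C(\la)\rangle$ is just a copy of the group algebra $\K C(\la)$, free of square roots of $q$. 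Since $C(\la)$ is a $2$-group and $\sigma$ fixes $2$-power roots of unity, every linear character of $C(\la)$ takes values in $\K$ and is $\sigma$-fixed. Using this together with the hypothesis on $\psi$, one argues that $\psi$ admits an extension to $\mathcal H^f=\K C(\la)\ltimes\mathcal H_0^f$ that is afforded over $\K$: the obstruction to a $\K$-rational extension is a class in $H^2(C(\la),\K^\times)$ which vanishes over $\C$ (as $\psi$ does extend over $\C$, namely to $\eta$) and is $2$-torsion, and it is killed over $\K$ by building the extension up along a chief series of $C(\la)$ with successive quotients $\cong C_2$, where the obstruction is controlled because $\K$ already contains the relevant roots of unity and, by \prettyref{prop:Clambdaeven}, no further square root of $q$ is introduced on the $C(\la)$-part. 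Every extension of $\psi$ to $\mathcal H^f$ then differs from a fixed $\K$-rational one by a linear character of $C(\la)$, which is $\sigma$-fixed; hence $\tau$ is $\sigma$-fixed, and we conclude $\eta^{(\sigma)}=\eta$.

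The main obstacle is precisely this descent step: one must genuinely verify that the field of definition of the extension of $\psi$ across the (possibly non‑abelian) $2$-group $C(\la)$ does not grow beyond $\K$. The features that make it work are the hypothesis on $\mathcal H_0^f$, which absorbs the Weyl‑type irrationalities such as $\sqrt{p}$; \prettyref{prop:Clambdaeven}, which removes those irrationalities from the $C(\la)$-part; and the fact that $\K$ contains all relevant $2$-power roots of unity, which is exactly what splits off the group‑algebra contribution of $C(\la)$.
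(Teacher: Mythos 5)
Your reduction steps are the same as the paper's: odd degree forces $C(\lambda)_\psi=C(\lambda)$, so $\eta$ comes from an extension $\tau$ of $\psi\in\irr(\mathcal{H}_0^K)$ to $\mathcal{H}^K=C(\lambda)\mathcal{H}_0^K$ with $\tau^{\mathcal H}=\tau$, the restriction of $\eta$ to $\mathcal{H}_0^f$ is $\psi_f$, and any two extensions differ by a linear character $\beta$ of the $2$-group $C(\lambda)$, which is $\sigma$-fixed. The gap is in the one step that actually carries the content: you need the \emph{particular} extension $\tau_f$ to be $\sigma$-fixed, and for this you assert the existence of a $\K$-rational extension via an obstruction class in $H^2(C(\lambda),\K^\times)$ that is ``$2$-torsion and killed over $\K$,'' built up along a chief series. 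This is not a proof: the relevant rationality question is a Schur-index/descent issue for the crossed product $C(\lambda)\mathcal{H}_0^f$, and the presence of $2$-power roots of unity in $\K$ does not by itself kill such obstructions; moreover, realizability over $\K$ is strictly more than what is needed (only $\sigma$-invariance of the character $\tau_f$ is required), and no argument is given that the vanishing over $\C$ descends. In addition, your appeal to \prettyref{prop:Clambdaeven} is illegitimate here: that proposition is proved only for types $B_n$ and $D_n$ and only for $\lambda$ arising from odd-degree characters of $G$, whereas \prettyref{prop:fixHf} must hold in the generality in which it is later applied — e.g.\ in the proof of \prettyref{thm:typeC} for $q\equiv5\pmod 8$ one uses \prettyref{cor:newsigmaaction} for a $\lambda$ whose $C(\lambda)$ contains elements of \emph{odd} length, so ``no square roots of $q$ on the $C(\lambda)$-part'' cannot be assumed.

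The paper closes this step with a determinant argument that avoids any rational-form construction: let $\varrho$ afford $\psi$ and $\widetilde\varrho$ afford $\tau$ extending $\varrho$, and set $\mu_0=\det\circ\varrho$, $\mu=\det\circ\widetilde\varrho$. By the Howlett--Kilmoyer construction $\mu(d\otimes b)=\mu(d)\mu_0(b)$, so after specialization $\mu_f$ takes values in $\K$ (using the hypothesis on $\mathcal{H}_0^f$ for $(\mu_0)_f$ and $\sigma$-invariance of characters of the $2$-group $C(\lambda)$ for $\mu_f(d)$). Since $\psi_f$ has values in $\K$, one has $(\tau_f)^{(\sigma)}=\beta\tau_f$ for some $\beta\in\irr(C(\lambda))$; comparing determinants gives $\mu_f=\beta^{\psi(1)}\mu_f$, and since $\beta$ has $2$-power order while $\psi(1)$ is odd, $\beta=1$. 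You would need to replace your descent sketch by an argument of this kind (or an actual proof of existence of a $\sigma$-fixed extension) for the proposal to be complete.
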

\begin{proof}
Keeping the notation of \prettyref{sec:genericsubalg}, we recall that every irreducible character of $\mathcal{H}^f\cong\End_G(\fl{\la})$ is of the form $(\tau^{\mathcal{H}})_f$, where $\tau$ is an extension to $C(\lambda)_\psi\mathcal{H}_0^K$ of some irreducible character $\psi$ of the generic algebra $\mathcal{H}_0^K$ corresponding to $R(\lambda)$.  Observing the description of values of $\tau^{\mathcal{H}}$, notice that $\tau^{\mathcal{H}}(a_1)=[C(\lambda):C(\lambda)_{\psi}] \tau(a_1)$.  Hence, if $(\tau^{\mathcal{H}})_f$ has odd degree, then since $C(\lambda)$ is a $2$-group, we see that $C(\lambda)=C(\lambda)_{\psi}$, $\mathcal{H}^K=C(\lambda)\mathcal{H}_0^K=C(\lambda)_\psi\mathcal{H}_0^K$ and $\tau^\mathcal{H}=\tau$.  It therefore suffices to show that if $\psi$ has odd degree, then $\tau_f$ takes its values in $\mathbb{K}$. 

Recall that the characters of the form $\beta\tau$ for $\beta\in\irr(C(\lambda))$ form the complete set of extensions of $\psi$ to $\mathcal{H}^K$.  Let $\varrho$ be a representation of $\mathcal{H}_0^K$ affording $\psi$ and $\widetilde{\varrho}$ a representation of $\mathcal{H}^K$ affording $\tau$ and extending $\varrho$.  Notice that $\mu_0:=\det\circ \varrho$ is also a representation of $\mathcal{H}_0^K$.    As such, we see that $\mu_0$ also extends to a representation $\mu$ of $\mathcal{H}^K$, and the complete set of such extensions is given by $\beta\mu$ for $\beta\in\irr(C(\lambda))$.  In particular, we may take $\mu=\det\circ\widetilde{\varrho}$.

Now, from the proof of \cite[Theorem 3.8]{HowlettKilmoyer}, we see that $\mu(d\otimes b)=\mu(d)\mu(b)=\mu(d)\mu_0(b)$ for $d\in C(\lambda)$ and $b\in \mathcal{H}_0^K$.  Since $\mu$ is degree $1$, we may consider it as a character.  Then taking the specializations, we see that $\mu_f(d\otimes b)=\mu_f(d)(\mu_0)_f(b)$.  By assumption, $(\mu_0)_f(b)$ takes values in $\mathbb{K}$, and all characters of the $2$-group $C(\lambda)$ are $\sigma$-fixed.  Hence we see that in fact, $\mu_f$ takes values in $\mathbb{K}$.

Now, since $\psi_f$ has values in $\mathbb{K}$, we see that $(\tau_f)^{(\sigma)}$ must also be an extension of $\psi_f$.  Then $(\tau_f)^{(\sigma)}=\beta\tau_f$ for some $\beta\in\irr(C(\lambda))$, and hence $(\det\circ\widetilde{\varrho})_f=\mu_f=(\mu_f)^{(\sigma)}=(\det\circ\widetilde{\varrho}^{(\sigma)})_f=\beta^{\psi(1)}\mu_f$.  Since $\beta$ is a character of a $2$-group and $\psi(1)$ is odd, we see that $\beta=1$ and hence $\tau_f$ has values in $\mathbb{K}$, as desired.
\end{proof}

Armed with \prettyref{prop:fixHf}, we are interested in determining when the characters of  $\mathcal{H}_0^f$ are afforded over $\mathbb{K}$.  The next proposition gives us a partial answer to this question.

\begin{lemma}\label{lem:fixH0f}

 Let $\lambda\in\irr(T)$ and suppose that the Weyl group $R(\lambda)$ has no component of type $G_2, E_7,$ or $E_8$.  Then every simple $\mathcal{H}_0^f$ - module is realizable over $\Q$.
\end{lemma}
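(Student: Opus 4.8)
The plan is to reduce to the known description of the fields of definition of the irreducible representations of generic Iwahori--Hecke algebras of Weyl groups and then to specialize the parameters. First I would note that $R(\lambda)$ is a Weyl group: it is generated by the reflections $s_\alpha$ with $\alpha$ in the crystallographic sub-root-system $\Phi_\lambda\subseteq\Phi$, so each irreducible component of $R(\lambda)$ has type $A_n$, $B_n$ (equivalently $C_n$), $D_n$, $F_4$, $E_6$, $E_7$, $E_8$, or $G_2$, and by hypothesis the last three do not occur. Correspondingly, $\mathcal{H}_0$ is the generic (multi-parameter) Iwahori--Hecke algebra of $R(\lambda)$ over $A_0=\C[\mathbf{u},\mathbf{u}^{-1}]$ with parameters $(u_\alpha\colon\alpha\in\Delta_\lambda)$, where $u_\alpha=u_\beta$ exactly when $\alpha$ and $\beta$ are $W(\lambda)$-conjugate, and $\mathcal{H}_0^f=\C\otimes_A\mathcal{H}_0$ is its specialization $u_\alpha\mapsto p_{\alpha,\lambda}$. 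By \cite[Theorem 10.5.5]{carter2} each $p_{\alpha,\lambda}$ is a power of $p$, and since $\alpha\in\Phi_\lambda$ forces $p_{\alpha,\lambda}\neq1$, these parameters are integers greater than $1$. In particular $\mathcal{H}_0^f$ is semisimple, so by \cite[Proposition 4.7]{howlettlehrer83} its simple modules correspond bijectively, via $\eta\mapsto\eta^f$ with $\eta^f(1\otimes a_w)=f(\eta(a_w))$, to the simple modules of the split semisimple algebra $\mathcal{H}_0^K$.

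The key input is that for a Weyl group with no component of type $G_2$ carrying unequal parameters, $E_7$, or $E_8$, every irreducible representation of the associated generic Iwahori--Hecke algebra is realizable over the field of rational functions $\Q(\mathbf{u})$: for components of type $A$, $B/C$, and $D$ this follows from the classical seminormal-form constructions, and for $F_4$ and $E_6$ it is likewise known (see \cite{GeckPfeiffer} and the references therein). The three excluded types are exactly those in which some irreducible representation genuinely requires a square-root irrationality --- $\sqrt{u_\alpha u_\beta}$ for a pair of a long and a short simple root in type $G_2$, and $\sqrt{u}$ for the distinguished representation pair in types $E_7$ and $E_8$ --- which is why the hypothesis is needed. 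Granting this, each simple $\mathcal{H}_0^K$-module is afforded by a matrix representation $\mathfrak{X}\colon\mathcal{H}_0^K\to\mathrm{Mat}_d(\Q(\mathbf{u}))$, and it remains to substitute $u_\alpha\mapsto p_{\alpha,\lambda}$. One can arrange $\mathfrak{X}$ so that its entries lie in $\mathbb{Z}[\mathbf{u},\mathbf{u}^{-1}]$ localized only at a product of ``quantum integers'' $1+u_\gamma+\cdots+u_\gamma^{k-1}$ and kindred factors; since each $p_{\alpha,\lambda}$ is an integer greater than $1$, every such factor specializes to a nonzero positive integer, so $\mathfrak{X}$ specializes to a matrix representation of $\mathcal{H}_0^f$ over $\Q$. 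Equivalently, and more conceptually, one may invoke the version of Tits' deformation theorem (see \cite{GeckPfeiffer}) stating that, $\mathcal{H}_0^K$ being split semisimple with all simple modules realizable over $\Q(\mathbf{u})$ and $\mathcal{H}_0^f$ being semisimple, every simple $\mathcal{H}_0^f$-module is the reduction of a $\Q$-form and so is realizable over $\Q$. By the bijection $\eta\mapsto\eta^f$ above, these specialized representations exhaust $\irr(\mathcal{H}_0^f)$, proving the claim.

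The main obstacle I anticipate is precisely this specialization step: one needs realizability over $\Q(\mathbf{u})$ to \emph{descend} through the substitution $u_\alpha\mapsto p_{\alpha,\lambda}$, rather than merely the rationality of character values, which is a weaker statement. For classical components this is transparent from the seminormal forms, whose matrix entries have denominators that are products of quantum integers and hence remain nonzero at integer parameters; the only real subtlety is that for $F_4$ and $E_6$ components one must either quote the explicit rational models recorded in \cite{GeckPfeiffer} or argue via the lattice-theoretic form of Tits' deformation theorem over the relevant (multidimensional) local ring. Everything else --- identifying $R(\lambda)$ as a Weyl group, the fact that the $p_{\alpha,\lambda}$ are powers of $p$, and the semisimplicity of $\mathcal{H}_0^f$ --- is routine given the setup recorded in \prettyref{sec:gallact}.
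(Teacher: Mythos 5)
Your proposal is correct and follows essentially the same route as the paper: the paper simply decomposes $R(\lambda)$ into its irreducible Weyl group components (so $\mathcal{H}_0^f$ becomes a tensor product of the corresponding specialized algebras) and quotes the rationality theorem for generic Iwahori--Hecke algebras of types other than $G_2$, $E_7$, $E_8$ (\cite[9.3.4]{GeckPfeiffer}), exactly the input you identify. The only difference is that you re-derive the descent through the specialization $u_\alpha\mapsto p_{\alpha,\lambda}$ by hand (seminormal forms, Tits deformation), whereas the paper absorbs this step into the cited reference.
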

\begin{proof}
Let $R(\lambda)=W_1\times\cdots\times W_t$ be the decomposition of $R(\lambda)$ into irreducible Weyl groups.  Then we can write $\mathcal{H}_0=\mathcal{H}_1\otimes\cdots\otimes\mathcal{H}_t$ where $\mathcal{H}_i$ is the generic subalgebra generated by $\{a_w\mid w\in W_i\}$.  Suppose that $M$ is an irreducible $\mathcal{H}_0^f$-module which is not realizable over $\Q$.  Then we may write $M=M_1\otimes\cdots\otimes M_t$ where each $M_i$ is a simple $\mathcal{H}_i^f$-module.  Since $M$ is not realizable over $\Q$, there must be some $i$ such that $M_i$ is not realizable over $\Q$.  But this contradicts our assumption that $W_i$ is not of type $G_2,$ $E_7,$ or $E_8$, by \cite[9.3.4]{GeckPfeiffer}.
\end{proof}

We remark that the definition of $\mathfrak{f}$ and the fact that $W(\lambda)=W(\la^\sigma)$ yield that $\mathfrak{f}(\eta)=\mathfrak{f}(\overline{\eta})$.  Hence, if $\eta^{(\sigma)}=\eta$, then $\gamma^{(\sigma)}=\gamma$.  Then \prettyref{lem:fixH0f} and \prettyref{prop:fixHf} give the following:

\begin{corollary}\label{cor:newsigmaaction}
Let  $\lambda\in\irr(T)$ such that the Weyl group $R(\lambda)$ has no component of type $G_2, E_7,$ or $E_8$ and the complement $C(\lambda)$ is a $2$-group.  Then for $\gamma\in\irr_{2'}(W(\lambda))$, $\eta^{(\sigma)}=\eta$, where $\eta $ is the character of $\End_G(\fl\la)$ satisfying $\mathfrak{f}(\eta)=\gamma$.  Hence in this case, $\gamma^{(\sigma)}=\gamma$.

\end{corollary}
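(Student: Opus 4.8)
The plan is to assemble the corollary directly from \prettyref{lem:fixH0f}, \prettyref{prop:fixHf}, and the remark immediately preceding the statement. The first step is to observe that since $\sigma\in\gal(\Q_{|G|}/\Q)$ fixes $\Q$ pointwise, the fixed field $\K$ of $\sigma$ satisfies $\Q\subseteq\K$; consequently any $\mathcal{H}_0^f$-module realizable over $\Q$ is in particular afforded over $\K$.

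Next I would invoke \prettyref{lem:fixH0f}: by hypothesis $R(\lambda)$ has no irreducible component of type $G_2$, $E_7$, or $E_8$, so every simple $\mathcal{H}_0^f$-module is realizable over $\Q$, hence over $\K$ by the previous step. This supplies exactly the hypothesis of \prettyref{prop:fixHf}, which --- using in addition that $C(\lambda)$ is a $2$-group --- yields $\eta^{(\sigma)}=\eta$ for every irreducible character $\eta$ of $\End_G(\fl{\la})$ of odd degree, and in particular for the character $\eta$ with $\mathfrak{f}(\eta)=\gamma$ whenever $\gamma\in\irr_{2'}(W(\lambda))$ (the bijection $\mathfrak{f}$ matching the degrees of the relevant modules, as already recorded in the statement of \prettyref{prop:fixHf}).

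Finally I would deduce the claim $\gamma^{(\sigma)}=\gamma$ from the remark preceding the corollary: since $\mathfrak{f}(\overline{\eta})=\mathfrak{f}(\eta)$, and $\overline{\eta^{(\sigma)}}=\overline{\eta}^{(\sigma)}$ directly from the definitions of the twisted representations, the equality $\eta^{(\sigma)}=\eta$ gives $\gamma^{(\sigma)}=\mathfrak{f}(\overline{\eta}^{(\sigma)})=\mathfrak{f}(\overline{\eta})=\mathfrak{f}(\eta)=\gamma$. Since all the substantive content lives in \prettyref{lem:fixH0f} and \prettyref{prop:fixHf}, there is no genuine obstacle in the corollary itself; the only point deserving a moment's care is verifying that the hypotheses of \prettyref{prop:fixHf} really are met, i.e.\ the elementary observation $\Q\subseteq\K$ and the fact that odd degree for $\gamma$ forces odd degree for the corresponding $\eta$ under $\mathfrak{f}$.
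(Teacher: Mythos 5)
Your proposal is correct and follows exactly the paper's route: \prettyref{lem:fixH0f} supplies realizability over $\Q\subseteq\K$, \prettyref{prop:fixHf} (with $C(\lambda)$ a $2$-group and degree preservation under $\mathfrak{f}$) gives $\eta^{(\sigma)}=\eta$, and the remark $\mathfrak{f}(\eta)=\mathfrak{f}(\overline{\eta})$ yields $\gamma^{(\sigma)}=\gamma$. No differences worth noting.
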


\subsection{SN2S-Goodness for $\bG{G}$ of Types ${B}_n, {C}_n, {D}_n,$ and ${E}_8$}

In this section, we show that for $q$ odd, the simple groups $PSp_{2n}(q)$ for $n\geq 2$, $P\Omega^\pm_{n}(q)$ for $n\geq 7$, and $E_8(q)$ are SN2S-Good.   

We begin with an observation about $\delta_{\lambda,\sigma}$ in the case $\lambda^\sigma=\lambda$ and $\lambda$ is linear, which follows from careful consideration of the extension maps constructed in \cite{Spath09} and \cite{MalleSpathMcKay2}.
\begin{proposition}\label{prop:extn}
Let $\bG{G}$ be simple, of simply connected type, not of type $A_n$, with odd defining characteristic.   Let $\la\in\irr_{2'}(L)$ be linear and fixed by $\sigma$.  Then there exists an extension $\Lambda_\la$ of $\la$ to $N(L)_{\la}$ which is $\sigma$-invariant.
\end{proposition}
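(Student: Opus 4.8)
The plan is to obtain the statement from the mere existence of the $N(L)$-equivariant extension map $\Lambda$ fixed in \prettyref{sec:gallact}, together with one elementary remark about $2$-parts of linear characters; the detailed constructions of \cite{Spath09} and \cite{MalleSpathMcKay2} are then needed only for what guarantees that such a map exists at all.

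The first step is to observe that, because $\sigma$ fixes every $2$-power root of unity and sends every $2'$-root of unity to its square, a root of unity is $\sigma$-fixed exactly when its order is a power of $2$; hence a linear character of any finite group is $\sigma$-invariant if and only if its order is a $2$-power. Applied to $\la$, the hypothesis $\la^\sigma=\la$ forces $o(\la)=2^k$ for some $k\geq 0$.

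The second step invokes the extension map: by \cite{geck93} and \cite[Theorem 8.6]{Lusztig84} there is the character $\Lambda_0:=\Lambda(\la)\in\irr(N(L)_\la)$ extending $\la$, which is linear since $\la$ is. Write $o(\Lambda_0)=2^a b$ with $b$ odd, and let $\Lambda_\la:=(\Lambda_0)^{(2)}$ be its $2$-part, that is, $\Lambda_0^{j}$ for an integer $j$ with $j\equiv 1\pmod{2^a}$ and $j\equiv 0\pmod{b}$; this is a character of $N(L)_\la$ of order $2^a$. Since $o(\la)=2^k$ divides $o(\Lambda_0)$ and is a $2$-power we have $k\le a$, so $j\equiv 1\pmod{o(\la)}$ and therefore $\Lambda_\la|_L=(\Lambda_0|_L)^{j}=\la^{j}=\la$; thus $\Lambda_\la$ is again an extension of $\la$ to $N(L)_\la$. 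As $\Lambda_\la$ has $2$-power order, the first step gives $\Lambda_\la^\sigma=\Lambda_\la$, which is the desired conclusion.

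I do not expect a genuine obstacle here: the statement is essentially immediate once one notices that a $\sigma$-fixed linear character has $2$-power order, and the only point deserving a moment's care is that passing to the $2$-part of $\Lambda_0$ leaves its restriction to $L$ unchanged, which is precisely where that observation is used. I would also point out that the argument needs neither simple connectedness of $\bG{G}$, nor the exclusion of type $A_n$, nor odd characteristic; these are inherited from the ambient setting and from the context in which \prettyref{prop:extn} is applied. Finally, if one wants the sharper statement that the \emph{fixed} extension map may be taken so that $\Lambda(\la)$ itself is $\sigma$-invariant for every linear, $\sigma$-fixed $\la$ (which then yields $\delta_{\la,\sigma}=1$ for such $\la$), one checks that replacing $\Lambda(\la)$ by its $2$-part for these $\la$ still defines an $N(L)$-equivariant extension map, since conjugation in $N(L)$ commutes with taking $2$-parts of characters; alternatively one traces the explicit formulas of \cite{Spath09} and \cite{MalleSpathMcKay2} directly.
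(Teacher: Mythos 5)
Your proof is correct, but it takes a different route from the paper. The paper's own argument stays inside the explicit construction of the extension map: it invokes the groups $H\leq V$ (the extended Weyl group and its toral part) from \cite[Section 2]{Spath09} and observes, via the proof of \cite[Lemma 4.2]{Spath09}, that one only has to produce a $\sigma$-fixed extension of $\la|_H$ to the stabilizer $V_\la$, which is possible precisely because $H$ is a $2$-group and $\la$ is linear. You instead bypass the internals of \cite{Spath09}: you start from the extension $\Lambda(\la)$ already guaranteed by the ambient setting, note it is linear because extensions preserve degree, and replace it by its $2$-part, using the observation that a linear character is $\sigma$-invariant exactly when its order is a $2$-power (so $\la$ has $2$-power order, the restriction of the $2$-part is still $\la$, and the $2$-part is $\sigma$-fixed). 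Your argument is more elementary and, as you note, does not use simple connectedness, the exclusion of type $A_n$, or odd characteristic beyond what is needed to have some extension at all; the paper's route has the mild advantage of exhibiting the $\sigma$-fixed extension inside the same construction that produces the $N(L)$-equivariant map $\Lambda$ in the first place (and is tailored to the torus case actually used later). Your closing remark is also the right one to make: since the application is $\delta_{\la,\sigma}=1$, one must be able to choose the fixed equivariant map so that $\Lambda_\la$ itself is $\sigma$-invariant, and your observation that passing to $2$-parts commutes with $N(L)$-conjugation (and that $\sigma$-fixedness is preserved along the orbit) settles that point explicitly, a step the paper leaves implicit.
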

\begin{proof}

Let $H$ and $V$ be defined as in \cite[Section 2]{Spath09}.  Then the proof of \cite[Lemma 4.2]{Spath09} yields that it suffices to note that there is a $\sigma$-fixed extension of $\la|_H$ to its stabilizer $V_\la$ in $V$, since $H$ is a $2$-group and $\lambda$ is linear.  
\end{proof}

\begin{theorem}\label{thm:princseries1mod8}
Let $G$ be a finite group of Lie type of simply connected type $B_n$ ($n\geq 3$), $C_n$ ($n\geq 2$), or $D_n^{\pm}$ ($n\geq4$) over a field with $q$ elements, where $q\equiv\pm1\mod 8$, or let $G$ be the simple group $E_8(q)$ for $q$ odd.  Then every $\chi\in\irr_{2'}(G)$ lying in the principal series is fixed by $\sigma$.
\end{theorem}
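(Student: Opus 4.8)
The plan is to put each principal‑series $\chi\in\irr_{2'}(G)$ into the Howlett--Lehrer form $\chi=R_T^G(\lambda)_\gamma$, to show that the linear character $\lambda$ of the maximally split torus is $\sigma$-fixed, and then to read off $\chi^\sigma=\chi$ from \prettyref{thm:GaloisAct} after checking that the three correction terms $r_\sigma$, $\delta'_{\lambda,\sigma}$, and the non-standard twist on $\irr(W(\lambda))$ are all trivial. The unipotent characters of $E_8(q)$ will be peeled off first using \prettyref{prop:unipsfixed}. Concretely, I first fix $\chi\in\irr_{2'}(G)$ in the principal series. The exceptional Harish-Chandra series of \prettyref{thm:MS7.7} is built from a genuinely cuspidal character of a proper Levi subgroup, hence is disjoint from the principal series, so \prettyref{lem:MS7.9} gives $\chi=R_T^G(\lambda)_\gamma$ with $T$ a maximally split torus, $\lambda\in\irr(T)$ satisfying $2\nmid[W:W(\lambda)]$, and $\gamma\in\irr_{2'}(W(\lambda))$. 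Let $s\in T^\ast$ correspond to $\lambda$ as in \prettyref{lem:W(s)}. The crux of the argument is the claim that $\lambda^\sigma=\lambda$.

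To prove that, note that $[G^\ast:C_{G^\ast}(s)]_{p'}$ divides the odd number $\chi(1)$, so $C_{G^\ast}(s)$ contains a full Sylow $2$-subgroup of $G^\ast$; I then argue that $s$ is a $2$-element. For $E_8(q)$ this is a short computation with Weyl-group orders: $R(\lambda)\cong W^\circ(s)$ must have $2$-part $|W(E_8)|_2=2^{14}$, and among the reflection subgroups of $W(E_8)$ of the form $W^\circ(s)$ only $W(E_8)$ (forcing $s=1$) and $W(D_8)$ have this property, so either $s=1$ or $s$ has order $2$ with $C^\circ_{\bG{G}^\ast}(s)$ of type $D_8$. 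For the classical types $B_n,C_n,D_n^\pm$ the point is that an eigenvalue of $s$ different from $\pm1$ forces a $\mathrm{GL}$-type factor into $C_{G^\ast}(s)$ whose $2$-part is strictly smaller than that of the corresponding symplectic/orthogonal factor once $q\equiv\pm1\pmod 8$, so all eigenvalues of $s$ are $\pm1$ and $s$ is a $2$-element. In either situation $\lambda$ has $2$-power order, and since $\sigma$ fixes $2$-power roots of unity, $\lambda^\sigma=\lambda$. I expect this to be the main obstacle of the proof: it is the only genuinely computational step and the only place where the congruence conditions on $q$ are used. If $\lambda=1$ (equivalently $s=1$), then $\chi$ is a unipotent character of odd degree, so by \prettyref{prop:unipsfixed} it is realizable over $\Q$ and thus $\chi^\sigma=\chi$; from now on assume $\lambda\neq 1$.

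Next I verify the hypotheses of \prettyref{cor:newsigmaaction}. By \prettyref{lem:W(s)}(2), $C(\lambda)\cong(C_{\bG{G}^\ast}(s)/C^\circ_{\bG{G}^\ast}(s))^{F^\ast}$ and $R(\lambda)\cong W^\circ(s)$ is the Weyl group of $C^\circ_{\bG{G}^\ast}(s)$. For the classical types, $\bG{G}^\ast$ is adjoint of classical type, so the component group of $C_{\bG{G}^\ast}(s)$ is a $2$-group and every simple component of $C^\circ_{\bG{G}^\ast}(s)$ is of classical type; for $E_8(q)$, $\bG{G}^\ast=\bG{G}$ is simply connected, so $C_{\bG{G}^\ast}(s)$ is connected, $C(\lambda)=1$, and $C^\circ_{\bG{G}^\ast}(s)$ is of type $D_8$ by the analysis above. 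In all cases $C(\lambda)$ is a $2$-group and $R(\lambda)$ has no component of type $G_2$, $E_7$, or $E_8$, so \prettyref{cor:newsigmaaction} applies and $\gamma^{(\sigma)}=\gamma$.

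It remains to control $\delta'_{\lambda,\sigma}$ and $r_\sigma$ and to conclude. Since $\lambda$ is linear and $\sigma$-fixed, \prettyref{prop:extn} provides a $\sigma$-invariant extension of $\lambda$ to $N(L)_\lambda$; comparing it with the chosen extension $\Lambda_\lambda$ shows $\delta_{\lambda,\sigma}=\beta^\sigma\beta^{-1}$ for some linear character $\beta$ of $W(\lambda)$, and any such difference has odd order, so its restriction to the $2$-group $C(\lambda)$ is trivial, i.e. $\delta'_{\lambda,\sigma}=1$ (for $E_8(q)$ this is immediate since $C(\lambda)=1$). Likewise $r_\sigma\equiv 1$: for $q\equiv\pm1\pmod 8$ this is \prettyref{lem:rw}, and for $E_8(q)$ with $q\equiv\pm3\pmod 8$ it holds because $C(\lambda)=1$ forces the $C(\lambda)$-part $w_1$ of each $w\in W(\lambda)$ to be trivial, so $r_\sigma(w)=(-1)^{\ell(w_1)}=1$. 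Feeding $r_\sigma\equiv 1$, $\delta'_{\lambda,\sigma}\equiv 1$, and $\gamma^{(\sigma)}=\gamma$ into \prettyref{thm:GaloisAct} gives $\gamma'=\gamma$, whence $\chi^\sigma=R_T^G(\lambda^\sigma)_{\gamma'}=R_T^G(\lambda)_\gamma=\chi$, as desired.
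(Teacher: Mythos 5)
Your overall architecture for the classical types coincides with the paper's (\prettyref{thm:GaloisAct} combined with \prettyref{prop:extn} to kill $\delta'_{\lambda,\sigma}$, \prettyref{lem:rw} for $r_\sigma$, and \prettyref{cor:newsigmaaction} for $\gamma^{(\sigma)}$; your Gallagher-type comparison of the two extensions is a slightly more careful, and correct, way of getting $\delta'_{\lambda,\sigma}=1$). Your treatment of $E_8(q)$ is a genuinely different and legitimate route: the paper disposes of $E_8$ by a rationality argument (\cite[Theorem 3.2]{SchaefferFrySN2S1} together with \cite[Proposition 5.5]{Geck03}), whereas you stay inside the Harish-Chandra machinery, using that $C_{\bG{G}^\ast}(s)$ is connected, that the only subsystems of $E_8$ whose Weyl groups have $2$-part $2^{14}$ are $E_8$ and $D_8$, and peeling off $\lambda=1$ via \prettyref{prop:unipsfixed} (which is needed, since $R(1)=W(E_8)$ is excluded from \prettyref{cor:newsigmaaction}); that part holds up.

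The problem is the step you yourself single out as the main obstacle: the claim that for types $B_n$, $C_n$, $D_n^{\pm}$ the character $\lambda$ is $\sigma$-fixed. Your justification argues with ``eigenvalues of $s$'' and a factor-by-factor comparison of $2$-parts, but for $\bG{G}$ simply connected of types $B_n$ and $D_n$ the dual $\bG{G}^\ast$ is of adjoint type ($\mathrm{PGSp}_{2n}$ and quotients of conformal orthogonal groups), where eigenvalues of a lift of $s$ are defined only up to a scalar, centralizers are in general disconnected, and the relevant factors include $GL_a(q^d)$, $GU_a(q^d)$ and component groups of order $2$; none of this is addressed, the decisive inequality $|C_{G^\ast}(s)|_2<|G^\ast|_2$ is asserted rather than proved (a comparison of one $GL$-factor with ``the corresponding symplectic/orthogonal factor'' does not by itself yield it), and your claim that this is where $q\equiv\pm1\pmod 8$ is used is misplaced --- the congruence is genuinely needed only in the $r_\sigma$ step via \prettyref{lem:rw}, which you also invoke. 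The conclusion you are after is true, and the paper obtains it immediately from \cite[Lemma 7.5]{MalleSpathMcKay2}, which gives $\lambda^2=1$: the point there is a Weyl-group argument (the base group $C_2^n$ is a normal subgroup of a Sylow $2$-subgroup of $W$ and acts on $\irr(T)$ by inverting coordinates, so $2\nmid[W:W(\lambda)]$ forces $\lambda=\lambda^{-1}$). As written, your proposal replaces that input by an unproved and imprecisely formulated assertion, so this step must either be carried out carefully in the adjoint dual groups or be replaced by the citation.
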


\begin{proof}
Let $\chi\in\irr_{2'}(G)$ lie in the principal series, so that there is $\lambda\in\irr(T)$ such that $\chi\in\irr_{2'}(G|R_T^G(\lambda))$.  By \prettyref{lem:MS7.9}, we may write $\chi=(R_T^G(\lambda))_\gamma$, where $\gamma\in\irr_{2'}(W(\la))$.

First assume that $G$ is type $B_n$, $C_n$, or $D^{\pm}_n$.  Then by \cite[Lemma 7.5]{MalleSpathMcKay2}, we see that $\lambda^2=1$, and hence $\lambda^\sigma=\lambda$.  Further, by \prettyref{prop:extn}, there is an extension $\Lambda_\la$ of $\la$ to $N_G(T)_{\la}$ which is $\sigma$-fixed, so that $\delta_{\la,\sigma}=1$ and certainly $R(\la)\leq\ker\delta_{\la,\sigma}$.   Further, note that $C(\lambda)$ embeds into $Z(\bG{G})/Z(\bG{G})^\circ$, yielding that $C(\lambda)$ is a $2$-group.  Then by \prettyref{thm:GaloisAct}, \prettyref{lem:rw}, and \prettyref{cor:newsigmaaction}, we have $\chi^\sigma=\chi$, using our assumption that $q\equiv\pm1\mod8$.  

Next assume that $G=E_8(q)$.  Then $G$ is self-dual and has a self-normalizing Sylow $2$-subgroup.  Hence any semisimple $s\in G^\ast$ centralizing a Sylow $2$-subgroup must have $2$-power order.  Then if $\chi\in\mathcal{E}(G,s)$, it follows that $\lambda$ also has $2$-power order so is fixed by $\sigma$, and by \cite[Theorem 3.2]{SchaefferFrySN2S1}, $\lambda$ is realizable over the field $\mathbb{K}$ of fixed points under $\sigma$.  Note that since $\gamma$ has odd degree, the corresponding character of $\e{\la}$ also has odd degree.  Then since $G$ is of adjoint type, \cite[Proposition 5.5]{Geck03} and its proof yield that $\chi$ can be realized over $\mathbb{K}$ as well.  (Indeed, note that the excluded characters in \cite[Proposition 5.5]{Geck03} correspond to characters of $\e{\la}$ of even degree.)
\end{proof}

When combined with \cite[Theorem 7.7]{MalleSpathMcKay2}, \prettyref{thm:princseries1mod8} yields the following immediate consequence.

\begin{corollary}\label{cor:princseries1mod8}
The simple groups $E_8(q)$ are SN2S-Good for all $q$.  The simple groups $P\Omega_n^\pm(q)$ are SN2S-Good for $n\geq 7$ when $q\equiv\pm1\mod8$.  Further, when $q\equiv1\mod8$, the simple groups $PSp_{2n}(q)$ for $n\geq 2$ are also SN2S-Good.
\end{corollary}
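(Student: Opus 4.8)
The statement is labelled an ``immediate consequence'' because it is an assembly of \prettyref{thm:princseries1mod8} together with \prettyref{thm:MS7.7} (which is \cite[Theorem 7.7]{MalleSpathMcKay2}), and the plan is simply to carry out that assembly. First I would recall, from \cite{kondratiev2005} (see the discussion following \prettyref{thm:NTT62anal}), that each simple group $S$ named in the statement has a self-normalizing Sylow $2$-subgroup: this holds for $E_8(q)$ with $q$ odd, for $P\Omega_n^\pm(q)$ with $q\equiv\pm1\pmod 8$, and for $PSp_{2n}(q)$ with $q\equiv 1\pmod 8$ (indeed with $q\equiv\pm1\pmod 8$). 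Consequently $N_S(P)=P$ for $P\in\syl_2(S)$, so $N_S(P)/P=1$ and \prettyref{cond:conjFI} holds vacuously for $S$; and, by the remark following \prettyref{cond:conjIF} that it suffices to verify \prettyref{cond:conjIF} on the Schur cover, it remains only to show that every $\chi\in\irr_{2'}(G)$ is fixed by $\sigma$, where $G$ is the Schur cover of $S$. Establishing this gives that $S$ is SN2S-Good in the sense of \prettyref{def:Goodness}.

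Next I would identify $G$ with $\textbf{G}^F$ for $\textbf{G}$ simple of simply connected type not of type $A_n$. For $E_8(q)$ the Schur multiplier is trivial, so $G=S=\textbf{G}^F$ with $\textbf{G}$ of type $E_8$. For $P\Omega_n^\pm(q)$ with $n\geq 7$ and $PSp_{2n}(q)$ with $n\geq 2$, under the hypothesis $q\equiv\pm1\pmod 8$ (in particular $q\geq 7$, so that we avoid the finitely many small groups — all occurring for $q\leq 4$ — that carry an exceptional Schur multiplier, which either have even defining characteristic or fail our congruence condition and in any event were already treated in \cite{SchaefferFrySN2S1} and \cite{SFTaylorTypeA}), the Schur cover is the simply connected group $\textbf{G}^F$ of type $B_m$, $D_m$, or $C_n$ accordingly. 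With $G$ of this form, \prettyref{thm:MS7.7} applies and gives that every $\chi\in\irr_{2'}(G)$ lies in the principal series of $G$, unless $G=Sp_{2n}(q)$ with $q\equiv 3\pmod 4$ and $\chi$ lies in the indicated Harish-Chandra series of a cuspidal character of degree $(q-1)/2$. I would then observe that this exceptional alternative is vacuous under our hypotheses: for $E_8$ and for types $B$ and $D$ the group is not of the excluded type $C$, while for type $C$ we have imposed $q\equiv 1\pmod 8$, hence $q\equiv 1\pmod 4$. Thus in every case under consideration, $\irr_{2'}(G)$ consists entirely of characters in the principal series of $G$.

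Finally I would apply \prettyref{thm:princseries1mod8}: its hypotheses ($q\equiv\pm1\pmod 8$ in simply connected types $B_n$, $C_n$, $D_n^\pm$, and $q$ odd for $E_8(q)$) are exactly those at hand, and its conclusion is that every $\chi\in\irr_{2'}(G)$ in the principal series is fixed by $\sigma$. Combined with the previous paragraph, this shows every $\chi\in\irr_{2'}(G)$ is $\sigma$-fixed, completing the proof of SN2S-Goodness for $E_8(q)$ with $q$ odd, for $P\Omega_n^\pm(q)$ with $q\equiv\pm1\pmod 8$ and $n\geq 7$, and for $PSp_{2n}(q)$ with $q\equiv 1\pmod 8$ and $n\geq 2$. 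For $E_8(q)$ with $q$ even I would instead cite \cite{SFTaylorTypeA}, which establishes SN2S-Goodness for all simple groups of Lie type in characteristic $2$; together these give the assertion for $E_8(q)$ for all $q$. I do not expect a genuine obstacle here: the only points needing care are the arithmetic of $q$ modulo $8$ versus modulo $4$ used to discard the exceptional case of \prettyref{thm:MS7.7}, the standard identification of the relevant Schur covers with simply connected groups of Lie type (including the triviality of the Schur multiplier of $E_8(q)$, so that no proper quasisimple cover arises), and the routine observation that proving $\chi^\sigma=\chi$ for all $\chi\in\irr_{2'}(G)$ is more than enough for the $Q$-invariant, prescribed-central-character statement demanded by \prettyref{cond:conjIF}.
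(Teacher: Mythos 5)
Your proposal is correct and follows essentially the same route as the paper: the paper's one-line proof combines \prettyref{thm:princseries1mod8} with \cite[Theorem 7.7]{MalleSpathMcKay2} and the earlier discussion (via \cite{kondratiev2005}) that these groups have self-normalizing Sylow $2$-subgroups, so that SN2S-Goodness reduces to showing every odd-degree character of the Schur cover is $\sigma$-fixed, exactly as you argue. Your explicit remarks on exceptional Schur multipliers and on $E_8(q)$ in characteristic $2$ (via \cite{SFTaylorTypeA}) just spell out details the paper leaves implicit.
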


We next address the case that $S=PSp_{2n}(q)$.  The following observation yields that $PCSp_{2n}(q)$ has a self-normalizing Sylow $2$-subgroup.  Note that $PCSp_{2n}(q)$ is the group $\mathrm{InnDiag}(S)\leqslant\Aut(S)$, as defined in \cite[2.5.10(d)]{gorensteinlyonssolomonIII}.

\begin{lemma}\label{lem:CSpSN2S}
Let $S=PSp_{2n}(q)$ with $q\equiv\pm3\pmod 8$.  Then $\mathrm{InnDiag}(S)\cong PCSp_{2n}(q)$ has a self-normalizing Sylow $2$-subgroup.
\end{lemma}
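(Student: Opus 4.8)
The plan is to use that $PCSp_{2n}(q)=\mathrm{InnDiag}(S)$ is an index-$2$ overgroup $S.2$ of $S=PSp_{2n}(q)$, the nontrivial coset consisting of a diagonal automorphism, and to reduce the claim to understanding how that automorphism acts on the Sylow $2$-normalizer quotient of $S$. Fix $R\in\syl_2(PCSp_{2n}(q))$ and put $P:=R\cap S\in\syl_2(S)$. Since $S\lhd PCSp_{2n}(q)$ we have $P\lhd R$, so $N_{PCSp_{2n}(q)}(R)\leq N_{PCSp_{2n}(q)}(P)$; and as $R\not\leq S$, the group $N_{PCSp_{2n}(q)}(P)$ meets the nontrivial coset of $S$. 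Writing $A:=N_S(P)/P$, which has odd order because $P$ is a full Sylow $2$-subgroup of $S$, we get $N_{PCSp_{2n}(q)}(P)/P=A\rtimes(R/P)$ by order considerations, with $R/P$ of order $2$, say $R/P=\langle\bar t\rangle$. The standard computation in such a semidirect product gives $N_{PCSp_{2n}(q)}(R)/P=C_A(\bar t)\langle\bar t\rangle$, so it suffices to prove $C_A(\bar t)=1$, i.e. that $\bar t$ acts on $A$ without nontrivial fixed points. (As a sanity check, for $n=1$ this recovers that $D_8$ is self-normalizing in $PCSp_2(q)=PGL_2(q)$.)

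Next I would identify $A$. Since $q\equiv\pm3\pmod 8$ we have $|Sp_2(q)|_2=|SL_2(q)|_2=8$, so a Sylow $2$-subgroup of the subgroup $Sp_2(q)\wr\mathfrak{S}_n\leq Sp_{2n}(q)$ — arising from a decomposition of the natural module into $n$ mutually orthogonal hyperbolic planes — already has order $8^n(n!)_2=|Sp_{2n}(q)|_2$ and is therefore Sylow in $Sp_{2n}(q)$; take $P$ to be its image in $S$. Combining $N_{SL_2(q)}(Q_8)\cong SL_2(3)$ with the fact that this normalizer is already contained in $Sp_2(q)\wr\mathfrak{S}_n$ — or, alternatively, invoking the description of Sylow $2$-normalizers in \cite{kondratiev2005} — one finds that $A$ is abelian, isomorphic to $C_3^{\,s_2(n)}$, where $s_2(n)$ is the number of nonzero binary digits of $n$: concretely $A$ is the fixed-point subgroup of $\syl_2(\mathfrak{S}_n)$ acting by coordinate permutations on $\bigl(N_{SL_2(q)}(Q_8)/Q_8\bigr)^n\cong C_3^{\,n}$, with one free $C_3$ per $\syl_2(\mathfrak{S}_n)$-orbit.

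The key point is then that it suffices to exhibit a single element of the diagonal coset inverting $A$: since $A$ is abelian, every element of that coset of $A$ in $N_{PCSp_{2n}(q)}(P)/P$ acts identically on $A$, so producing one such witness forces $\bar t$ itself to invert $A$, whence $C_A(\bar t)=\{x\in A: x=x^{-1}\}=1$ because $|A|$ is odd. Realizing $PCSp_{2n}(q)$ inside $GSp_{2n}(q)$, I would pick $g\in N_{GL_2(q)}(Q_8)$ with $\det g$ a non-square in $\F_q^\times$; a short argument with $N_{GL_2(q)}(Q_8)/(\text{scalars})\cong\aut(Q_8)\cong\mathfrak{S}_4$ shows this is equivalent to $g$ inducing on $Q_8$ an element of $\out(Q_8)\cong\mathfrak{S}_3$ outside the subgroup $C_3=N_{SL_2(3)}(Q_8)/Q_8$, i.e. an outer involution, which inverts that $C_3$. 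The element of $GSp_{2n}(q)$ scaling each of the $n$ hyperbolic planes by this $g$ has similitude $\det g$, normalizes $P=Q_8\wr\syl_2(\mathfrak{S}_n)$, lies outside $S$, and inverts each of the $n$ copies of $C_3$, hence inverts $A$. Combined with the reduction above, this gives $N_{PCSp_{2n}(q)}(R)=R$.

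The hard part will be the two bookkeeping steps in the middle: first, checking that no element of $Sp_{2n}(q)$ outside $Sp_2(q)\wr\mathfrak{S}_n$ normalizes $P$, so that $A$ really is $C_3^{\,s_2(n)}$ with the stated generators — this is either a characteristic-subgroup argument for $Q_8\wr\syl_2(\mathfrak{S}_n)$ or a direct appeal to \cite{kondratiev2005}; and second, verifying the precise dictionary "$\det g$ a non-square $\iff$ $g$ induces an outer involution of $Q_8$", which is exactly the place where the hypothesis $q\equiv\pm3\pmod 8$ (rather than $q\equiv\pm1\pmod 8$, where $|SL_2(q)|_2>8$ and the whole Sylow picture changes) is genuinely used.
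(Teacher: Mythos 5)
Your proposal is correct and follows essentially the same route as the paper: both identify the Sylow $2$-subgroup of $Sp_{2n}(q)$ via its $Q_8$-wreath structure (Carter--Fong), identify $N_S(P)/P$ as elementary abelian of order $3^t$ with one $C_3$ per $2$-power block, and conclude by showing the diagonal automorphism acts on this quotient without nontrivial fixed points. The only difference is cosmetic, in the rank-$1$ input: the paper quotes Carter--Fong's Lemma 3 that $PGL_2(q)$ has a self-normalizing Sylow $2$-subgroup, whereas you rederive that fact by exhibiting an element of nonsquare determinant normalizing $Q_8$ and inducing an outer involution in $\mathrm{Out}(Q_8)\cong\mathfrak{S}_3$, which inverts each $C_3$.
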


\begin{proof}
Let $\wt{S}=PCSp_{2n}(q)$.  Note that $|\wt{S}/S|=2$, and in fact we may write $\wt{S}\cong S\rtimes  \langle\delta\rangle$ for some diagonal automorphism $\delta$.  Now, by \cite{carterfong}, we see that a Sylow $2$-subgroup of $G=Sp_{2n}(q)$ is the direct product $P=P_1\times\cdots\times P_t$, where $2n=2^{r_1}+\cdots +2^{r_t}$ is the 2-adic expansion of $2n$ and $P_i$ is a Sylow $2$-subgroup of $Sp_{2^{r_i}}(q)$.   Further, writing $\overline{P}=PZ(G)/Z(G)$ for a Sylow $2$-subgroup of $S$, we see that $N_G(P)/P\cong N_S(\overline{P})/\overline{P}$ is elementary abelian of order $3^t$, where the $i$th copy of the cyclic group $C_3$ corresponds to $N_{Sp_{2^{r_i}}(q)}(P_i)/P_i$.  Hence it suffices to show that $C_{V_i}(\delta)=1$, where we write $N_{Sp_{2^{r_i}}(q)}(P_i)=P_i\rtimes V_i$ with $V_i\cong C_3$.

Using the results of \cite{carterfong}, we can further realize $P_i$ as the wreath product $P'\wr T_{r_i-1}$, where $P'$ is a Sylow $2$-subgroup of $Sp_{2}(q)$ and $T_{r_i-1}$ is a Sylow $2$-subgroup of  the symmetric group $\mathfrak{S}_{2^{r_i}-1}$.  As such, $V_i$ acts on $P_i$ via the action of $N_{Sp_2(q)}(P')/P'\cong C_3$ on $P'$.  We also see that $\delta$ can be taken to act on $P'$ via the action of $PGL_2(q)/PSL_2(q)$, so that $C_{V_i}(\delta)=1$ since $PGL_2(q)$ has a self-normalizing Sylow $2$-subgroup, using \cite[Lemma 3]{carterfong}. 
\end{proof}

\begin{theorem}\label{thm:typeC}
Let $S=PSp_{2n}(q)$ for $n\geq 2$, where $q$ is odd.  Then $S$ is SN2S-Good.
\end{theorem}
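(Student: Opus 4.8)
The plan is to argue by cases on $q$ modulo $8$, writing $G=Sp_{2n}(q)$ for the Schur cover of $S$ (the Schur multiplier being cyclic of order $2$, apart from a few small groups that can be checked directly). If $q\equiv 1\pmod 8$ the assertion is \prettyref{cor:princseries1mod8}, so I may assume $q\equiv 3,5,$ or $7\pmod 8$; then $q$ is an odd power of an odd prime and every $2$-group $Q$ of automorphisms of $S$ lies in $\mathrm{InnDiag}(S)=PCSp_{2n}(q)$.

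Suppose first $q\equiv -1\pmod 8$. Then $S$, hence $G$, has a self-normalizing Sylow $2$-subgroup, so by the reduction recalled in \prettyref{sec:SimpleStatements} it suffices to show that every $\chi\in\irr_{2'}(G)$ is fixed by $\sigma$. By \prettyref{thm:MS7.7}, either $\chi$ lies in the principal series, where \prettyref{thm:princseries1mod8} applies (it covers $q\equiv -1\pmod 8$), or $n$ is odd and $\chi=R_L^G(\lambda)_\gamma$ with $\lambda$ built from the degree-$(q-1)/2$ cuspidal character of a factor $Sp_2(q)$ of a Levi $L\cong Sp_2(q)\times(q-1)^{n-1}$, $C_{G^\ast}(s)=B_{2k}(q)\cdot\tw{2}{D}_{n-2k}(q).2$, $2\nmid[W(L):W(\lambda)]$, and $\gamma\in\irr_{2'}(W(\lambda))$. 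For these I would show that $\chi$ has field of values inside $\Q(\sqrt{-q})$, which is fixed by $\sigma$ because $\sqrt{-q}=\sqrt{-1}\sqrt q$ and both $\sqrt{-1}$ and (by the Gauss-sum computation behind \prettyref{lem:rwgen}, using $q\equiv -1\pmod 8$) $\sqrt q$ are $\sigma$-fixed. Pinning down the field of values means running the apparatus of \prettyref{sec:gallact}: the torus part of $\lambda$ has order at most $2$ (forced by \prettyref{lem:MS7.9} as $\chi$ has odd degree), the cuspidal character of $Sp_2(q)$ of degree $(q-1)/2$ has field $\Q(\sqrt{-q})$ (note $q\equiv 3\pmod 4$), $W(\lambda)=R(\lambda)\rtimes C(\lambda)$ with $R(\lambda)$ a Weyl group of classical type and $C(\lambda)\cong(C_{G^\ast}(s)/C^\circ_{G^\ast}(s))^{F^\ast}$ of order $2$ (the ``$.2$''), so the relevant $\gamma$ is rational (as in the proof of \prettyref{lem:W(la)}) and the endomorphism algebra $\e\la$ has parameters powers of $q$, hence modules realizable over $\Q(\sqrt q)\subseteq\K$; with $r_\sigma=1$ by \prettyref{lem:rw} and $\delta_{\la,\sigma}=1$ for a $\sigma$-equivariant choice of extension map, \prettyref{thm:GaloisAct} gives $\chi^\sigma=\chi$.

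Now suppose $q\equiv\pm 3\pmod 8$. Then $S$ has no self-normalizing Sylow $2$-subgroup while $\wt S:=PCSp_{2n}(q)$ does by \prettyref{lem:CSpSN2S}; by the reduction in the paragraph preceding \prettyref{prop:unipsfixed}---the unipotent odd-degree characters of $S$ being $\sigma$-fixed by \prettyref{prop:unipsfixed}---it suffices to produce one non-unipotent $\chi\in\irr_{2'}(S)$ with $\chi^\sigma\neq\chi$. Let $2^a$ (with $a\geq 1$, which exists since $n\geq 2$) be one of the parts of the $2$-adic expansion of $n$, and let $\lambda\in\irr(T)$ be the order-$2$ character that is nontrivial exactly on the corresponding $2^a$ coordinates of the maximally split torus $T\cong(\F_q^\times)^n$. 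Then $[W:W(\lambda)]=\binom{n}{2^a}$ is odd (Lucas), $\lambda$ is trivial on $Z(Sp_{2n}(q))$ since $2^a$ is even, and $W(\lambda)=R(\lambda)\rtimes C(\lambda)$ with $R(\lambda)$ of type $C_{n-2^a}\times D_{2^a}$ and $C(\lambda)\cong C_2$ generated by a single-coordinate sign change $w_1$; crucially $w_1$ has \emph{odd} length in the Weyl group of $\bG{G}$---precisely the behaviour that type $C_n$ was excluded from in \prettyref{prop:Clambdaeven}. Pick $\gamma\in\irr_{2'}(W(\lambda))$ with $\chi:=R_T^G(\lambda)_\gamma\in\irr_{2'}(G)$, which exists by \prettyref{lem:MS7.9}; then $\chi$ is non-unipotent and descends to $S$. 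As $\lambda^\sigma=\lambda$, \prettyref{prop:extn} gives $\delta_{\la,\sigma}=1$, \prettyref{cor:newsigmaaction} gives $\gamma^{(\sigma)}=\gamma$, and \prettyref{lem:rw} gives $r_\sigma(w_1)=(-1)^{\ell(w_1)}=-1$, so \prettyref{thm:GaloisAct} yields $\chi^\sigma=R_T^G(\lambda)_{\gamma\otimes r_\sigma}$; since $\gamma$ has odd degree, $\gamma|_{R(\lambda)}$ is irreducible and $\gamma\otimes r_\sigma\neq\gamma$ (the two distinct extensions of $\gamma|_{R(\lambda)}$ to $W(\lambda)$), so $\chi^\sigma\neq\chi$.

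The hard part is the case $q\equiv -1\pmod 8$ for the exceptional, non-principal-series characters of $Sp_{2n}(q)$: because $\lambda$ is there genuinely non-linear, \prettyref{prop:fixHf}, \prettyref{cor:newsigmaaction}, and \prettyref{lem:W(la)} do not apply verbatim, and one must control the twist $\lambda^\sigma$, the linear character $\delta_{\la,\sigma}$ of $W(\lambda)$, and the twisted action $\gamma\mapsto\gamma^{(\sigma)}$ directly---equivalently, compute the field of values of $\chi$---from the explicit realization of these characters via Harish-Chandra induction of a degree-$(q-1)/2$ cuspidal character of $Sp_2(q)$. A secondary point is the check that the principal-series character used for $q\equiv\pm 3\pmod 8$ really has odd degree and that $C(\lambda)$ has the stated shape, both of which follow from \prettyref{lem:W(s)}, \prettyref{lem:MS7.9}, and a short Weyl-group computation.
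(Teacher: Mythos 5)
Your overall architecture matches the paper's: pass to $G=Sp_{2n}(q)$, split on $q\bmod 8$, quote \prettyref{cor:princseries1mod8} for $q\equiv1\pmod 8$, treat $q\equiv7\pmod 8$ via \prettyref{thm:MS7.7}/\prettyref{thm:princseries1mod8} plus the exceptional series, and for $q\equiv\pm3\pmod 8$ exhibit one non-unipotent odd-degree character of $S$ moved by $\sigma$. The one genuine divergence is welcome: for $q\equiv3\pmod 8$ you reuse the principal-series construction (order-$2$ $\lambda$ supported on $2^a$ coordinates, $C(\lambda)$ generated by an odd-length sign change, so $r_\sigma$ twists the Howlett--Lehrer label), whereas the paper simply observes that the degree-$(q-1)/2$ cuspidal $\psi$ of the $Sp_2(q)$ Levi factor satisfies $\psi^\sigma\neq\psi$ (its irrationality is $\sqrt{-q}$, which $\sigma$ moves exactly when $q\equiv\pm3\pmod 8$), so the whole non-principal Harish-Chandra series is moved; your choice of $k=2^a$ a part of the $2$-adic expansion of $n$ is the right one and your mechanism ($\delta_{\lambda,\sigma}=1$ by \prettyref{prop:extn}, $\gamma^{(\sigma)}=\gamma$ by \prettyref{cor:newsigmaaction}, $r_\sigma(w_1)=-1$ by \prettyref{lem:rw}, hence $\gamma'=r_\sigma\gamma\neq\gamma$) is exactly the paper's $q\equiv5\pmod 8$ argument and does extend to $q\equiv3\pmod 8$.

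However, two steps are not actually justified as written. First, \prettyref{lem:MS7.9} only says every odd-degree character has the form $R_T^G(\lambda)_\gamma$ with $[W:W(\lambda)]$ odd and $\gamma\in\irr_{2'}(W(\lambda))$; it does \emph{not} assert that such a pair yields an odd-degree character, so "which exists by \prettyref{lem:MS7.9}" is backwards. You need to know that $\irr_{2'}(G)\cap\irr(G|R_T^G(\lambda))\neq\emptyset$ for your specific $\lambda$: e.g.\ note that the dual involution $s$ has $C_{G^\ast}(s)\cong (SO_{2k}^+(q)\times SO_{2(n-k)+1}(q)).2$ and compute that $|G^\ast:C_{G^\ast}(s)|_2=1$ precisely because $\binom{n}{2^a}$ is odd (equivalently, cite the classification of such series in \cite[Table 1]{MalleSpathMcKay2}, as the paper does), so the semisimple character of $\mathcal{E}(G,s)$ is an odd-degree constituent; oddness of $[W:W(\lambda)]$ alone is not sufficient (for $k$ odd and $q\equiv3\pmod4$ the index $|G^\ast:C_{G^\ast}(s)|$ is even). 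Second, you leave the $q\equiv7\pmod8$ exceptional series as "the hard part," to be settled by a direct field-of-values computation; in fact no new computation is needed, and the paper closes it with the tools you already listed: since $q\equiv7\pmod8$, $\sqrt{-q}$ is $\sigma$-fixed, so $\psi^\sigma=\psi$ and $\lambda^\sigma=\lambda$; choosing $\Lambda_\lambda=\psi\times\Lambda_1(\lambda_1)$ with $\Lambda_1(\lambda_1)$ the $\sigma$-fixed extension from \prettyref{prop:extn} (applied to the linear torus part) gives $\delta_{\lambda,\sigma}=1$; and since $W(\lambda)\cong W(\lambda_1)$, \prettyref{cor:newsigmaaction} applies as stated to give $\gamma^{(\sigma)}=\gamma$, whence \prettyref{thm:GaloisAct} with $r_\sigma=1$ (\prettyref{lem:rw}) yields $\chi^\sigma=\chi$. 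With those two repairs your proof is complete and essentially equivalent to the paper's, with a slightly more uniform treatment of $q\equiv\pm3\pmod 8$.
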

\begin{proof}
Let $G=Sp_{2n}(q)$.  Note that by the results of \cite{SchaefferFrySN2S1}, we may assume that $S$ does not have an exceptional Schur multiplier, and if $q\equiv 1\mod 8$, we are done by \prettyref{cor:princseries1mod8}.  

First suppose $q\equiv 3\mod 4$, so that  \cite[Theorem 7.7]{MalleSpathMcKay2} yields that there are odd degree characters lying in a series $R_L^G(\lambda)$ where $L\cong Sp_2(q)\times T_1$ for $T_1\cong (q-1)^{n-1}$ and $\lambda=\psi\times\lambda_1$ with $\psi\in\irr(Sp_2(q))$ of degree $\frac{q-1}{2}$.  If $q\equiv 3\mod 8$, then $S$ does not have a self-normalizing Sylow $2$-subgroup.  The character values of $Sp_2(q)$ are well-known, and it is clear that $\psi^\sigma\neq\psi$, since $\sqrt{q}^\sigma\neq\sqrt{q}$, so $\lambda\in\irr(L)$ is not fixed by $\sigma$.  Hence $R_L^G(\lambda)$ is not fixed by $\sigma$, so there exist characters of odd degree not fixed by $\sigma$, and we are done in this case by \prettyref{prop:unipsfixed} and the discussion preceding it. If $q\equiv 7\mod 8$, then $G$ has a self-normalizing Sylow $2$-subgroup and it suffices to show that every $\chi\in\irr_{2'}(G)$ is fixed by $\sigma$.  In particular, \prettyref{thm:princseries1mod8} yields that it suffices to show that $\chi^\sigma=\chi$ for $\chi=R_L^G(\la)_\gamma$ in the non-principal series mentioned above.  Notice that $r_\sigma(w)=1$ by \prettyref{lem:rw}.  Further, since $\Lambda_\lambda$ is of the form $\psi\times\Lambda_1(\lambda_1)$ for an extension $\Lambda_1(\lambda_1)$ of $\lambda_1$ to $N(T_1)_{\lambda_1}$, we have $\delta_{\la,\sigma}=1$ by \prettyref{prop:extn}.  Since $W(\lambda)\cong W(\lambda_1)$, \prettyref{cor:newsigmaaction} still yields $\gamma^{(\sigma)}=\gamma$ in this situation, proving the statement by \prettyref{thm:GaloisAct}.

Finally, let $q\equiv 5\mod 8$.  As in the case $q\equiv 3\mod 8$, $S$ does not have a self-normalizing Sylow $2$-subgroup, and it suffices to show that there is a character of odd degree not fixed by $\sigma$.  Let $s\in G^\ast$ with $C_{G^\ast}(s)$ as in the first two lines of \cite[Table 1]{MalleSpathMcKay2} with $k$ the largest power of $2$ smaller than $n$.    Let $\{\alpha_1,...,\alpha_n\}$ denote the simple roots determined by $\bG{T}$ and $\bG{B}$, with $\alpha_1=e_1$ and $\alpha_i=e_i-e_{i-1}$ as in \cite[1.8.8]{gorensteinlyonssolomonIII}.  Let $\lambda$ be trivial on $ h_{e_i}(t)$ for $i> k$ and $t\in \F_{q}^\times$ and have order $2$ on the subgroups $\langle h_{e_i}(t)\colon t\in\F_q^\times\rangle$ for $1\leq i\leq k$.  Then $R(\lambda)$ is a reflection group of type $D_k\times B_{n-k}$, $W(\lambda)$ is of type $B_k\times B_{n-k}$, and $C(\lambda)$ induces the graph automorphism on the $D_k$ component.  Hence a nontrivial element in $C(\lambda)$ is conjugate to a simple reflection, meaning it has odd length in $W$, yielding that $r_\sigma(w)=-1$ by \prettyref{lem:rw}.  Then letting $\chi=R_T^G(\lambda)_\gamma$, we see that since $\lambda^\sigma=\lambda$ by construction, $\delta_{\la,\sigma}=1$ by \prettyref{prop:extn} and  $\gamma^{(\sigma)}=\gamma$ by \prettyref{cor:newsigmaaction}, it must be that $\chi^\sigma\neq\chi$ by \prettyref{thm:GaloisAct}.
\end{proof}

\begin{theorem}\label{thm:typeBD}
Let $S=P\Omega_{n}^{\pm}(q)$ for $n\geq 7$, where $q$ is odd.  Then $S$ is SN2S-Good.
\end{theorem}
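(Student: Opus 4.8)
The plan is to reduce to the case $q\equiv\pm3\pmod 8$ and then run the argument of \prettyref{thm:princseries1mod8}, with \prettyref{prop:Clambdaeven} supplying exactly the new input needed to control the sign $r_\sigma$.

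First I would carry out the reduction. By the results of \cite{SchaefferFrySN2S1} I may assume $S$ has no exceptional Schur multiplier, so that the Schur cover of $S$ is $G=\bG{G}^F$ with $\bG{G}$ simple of simply connected type $B_m$ (when $n=2m+1$) or $D_m$ (when $n=2m$); in particular $\bG{G}$ is not of type $A_n$, the hypothesis $n\geq 7$ puts us in the range $m\geq 3$ (type $B$) resp.\ $m\geq 4$ (type $D$) required by \prettyref{prop:Clambdaeven}, and $G\neq\tw{3}{D}_4(q)$. By \cite{kondratiev2005} the group $S$ has a self-normalizing Sylow $2$-subgroup, and since $Z(G)$ is a (central) $2$-group it lies in every Sylow $2$-subgroup of $G$, whence the same holds for $G$. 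As noted in the discussion following \prettyref{thm:NTT62anal}, \prettyref{cond:conjFI} then holds vacuously and it suffices to show that every $\chi\in\irr_{2'}(G)$ is fixed by $\sigma$. When $q\equiv\pm1\pmod 8$ this is \prettyref{cor:princseries1mod8}, so from now on I assume $q\equiv\pm3\pmod 8$.

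Next, let $\chi\in\irr_{2'}(G)$. Since $\bG{G}$ is not of type $C_n$, \prettyref{lem:MS7.9} applies without its exceptional case, so I may write $\chi=R_T^G(\lambda)_\gamma$ with $T$ a maximally split torus, $\lambda\in\irr(T)$ satisfying $2\nmid[W:W(\lambda)]$, and $\gamma\in\irr_{2'}(W(\lambda))$. By \cite[Lemma 7.5]{MalleSpathMcKay2} we have $\lambda^2=1$, hence $\lambda^\sigma=\lambda$; then \prettyref{prop:extn} produces a $\sigma$-invariant extension $\Lambda_\lambda$, so that $\delta_{\lambda,\sigma}=1$, and in particular $R(\lambda)\leq\ker\delta_{\lambda,\sigma}$ and $\delta'_{\lambda,\sigma}=1$. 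As in the proof of \prettyref{thm:princseries1mod8}, $C(\lambda)$ embeds into $Z(\bG{G})/Z(\bG{G})^\circ=Z(\bG{G})$, which is a $2$-group in types $B$ and $D$, so $C(\lambda)$ is a $2$-group.

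Finally I would invoke \prettyref{thm:GaloisAct}: $\chi^\sigma=R_T^G(\lambda)_{\gamma'}$ with $\gamma'(w)=r_\sigma(w)\,\delta'_{\lambda,\sigma}(w^{-1})\,\gamma^{(\sigma)}(w)=r_\sigma(w)\,\gamma^{(\sigma)}(w)$. Here is where \prettyref{prop:Clambdaeven} does the real work: it asserts that every element of $C(\lambda)$ has even length in the Weyl group of $\bG{G}$, so that, writing $w=w_1w_2$ with $w_1\in C(\lambda)$ and $w_2\in R(\lambda)$, \prettyref{lem:rw} gives $r_\sigma(w)=(-1)^{\ell(w_1)}=1$ for every $w\in W(\lambda)$ despite $q\not\equiv\pm1\pmod 8$. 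Moreover $R(\lambda)$ is a reflection subgroup of a Weyl group of type $B$ or $D$, hence a direct product of Weyl groups of classical types $A$, $B$, $D$ only; together with $C(\lambda)$ being a $2$-group this verifies the hypotheses of \prettyref{cor:newsigmaaction}, which yields $\gamma^{(\sigma)}=\gamma$. Therefore $\gamma'=\gamma$ and $\chi^\sigma=\chi$, completing the proof. The one genuinely substantial ingredient — bounding the parity of the lengths of elements of $C(\lambda)$, in particular in the twisted $\tw{2}{D}_m$ and $D_4$ cases — has already been established in \prettyref{prop:Clambdaeven}, so the remaining work here is essentially bookkeeping: checking that the hypotheses of \prettyref{lem:MS7.9}, \prettyref{prop:Clambdaeven}, and \prettyref{cor:newsigmaaction} genuinely hold for the groups $G=\bG{G}^F$ in question.
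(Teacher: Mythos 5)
Your proposal is correct and follows essentially the same route as the paper: reduce via \cite{kondratiev2005} and \prettyref{cor:princseries1mod8} to showing every odd-degree character of the Schur cover is $\sigma$-fixed when $q\equiv\pm3\pmod 8$, then run the argument of \prettyref{thm:princseries1mod8} using \prettyref{lem:MS7.9}, \prettyref{prop:extn}, \prettyref{cor:newsigmaaction}, and \prettyref{thm:GaloisAct}, with \prettyref{prop:Clambdaeven} and \prettyref{lem:rw} giving $r_\sigma\equiv 1$. Your write-up just makes explicit the bookkeeping the paper leaves implicit.
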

\begin{proof}
Note that $S$ has a self-normalizing Sylow $2$-subgroup and that by the results of \cite{SchaefferFrySN2S1}, we may assume that $S$ does not have an exceptional Schur multiplier.  Let $G=\mathrm{Spin}_{n}^\pm(q)$.   Then by \prettyref{thm:GaloisAct}, \prettyref{prop:extn}, \prettyref{lem:rw}, \prettyref{cor:newsigmaaction}, and \prettyref{cor:princseries1mod8}, it suffices to note that by \prettyref{prop:Clambdaeven}, in the case $q\equiv\pm3\mod 8$, $\ell(w_1)$ is even for each $w_1\in C(\lambda)$ when $\chi=R_T^G(\lambda)_\gamma\in\irr_{2'}(G)$. 
\end{proof}

\subsection{SN2S-Goodness for $\bG{G}$ of Type ${E}_6$}

For this section, we keep the notation of the beginning of \prettyref{sec:oddchars}.  Let $q$ be a power of an odd prime $p$ and let $\bG{G}$ be simply connected of type ${E}_6$, so that $G=\bG{G}^F=E_6^\epsilon(q)_{sc}$, and let $S$ be the simple group $E_6^\epsilon(q)=G/Z$ with $Z:=Z(G)$.  Here we let $\epsilon=1$ (or $+$) if $F$ is split and $\epsilon = -1$ (or $-$) if $F$ is twisted.  Then the dual group $G^\ast\cong E_6^\epsilon(q)_{ad}$ satisfies $[G^\ast,G^\ast]\cong S$, and we make this identification.   Further, we fix a regular embedding $\bG{G}\hookrightarrow \wt{\bG{G}}$ and write $\wt{G}=\wt{\bG{G}}^F$ so that $G=[\wt{G}, \wt{G}]$ and $G^\ast\cong \wt{G}/Z(\wt{G})\cong \wt{G^\ast}/Z(\wt{G}^\ast)$.  Recall that this induces a surjection $\wt{\bG{G}}^\ast\rightarrow\bG{G}^\ast$.

We will write $\phi$ for a fixed nontrivial graph automorphism of $G$ normalizing a maximally split torus $T=\bG{T}^F$ and let $F_p$ denote the field automorphism induced by the map $x\mapsto x^p$ on $\overline{\mathbb{F}}_p$, so that $\aut(G)\cong G^\ast\rtimes \langle \phi, F_p\rangle$.  We begin by describing when the group $GQ/Z$ has a self-normalizing Sylow $2$-subgroup, for some $2$-group $Q\in\aut(G)$.  First, we describe the situation when $Q=1$, which can be found in \cite[Theorem 6]{kondratievmazurov}.

\begin{lemma}[Kondrat'ev-Mazurov]\label{lem:whenSN2Ssimple}
A simple group $E_6^\epsilon(q)$ has a self-normalizing Sylow $2$-subgroup if and only if $(q-\epsilon)_{2'} = (3,q-\epsilon)_{2'}$.
\end{lemma}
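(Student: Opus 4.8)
This statement is \cite[Theorem 6]{kondratievmazurov}, so the quickest route is simply to quote it; I describe here the line one would take to prove it directly. The first step is to transfer the question from the simple group $S$ down to the simply connected group $G=\textbf{G}^F$. Since $Z=Z(G)$ has odd order $d:=(3,q-\epsilon)$, for any $P\in\syl_2(G)$ the product $PZ$ is the internal direct product $P\times Z$, so $P$ is characteristic in $PZ$ and $N_G(PZ)=N_G(P)$. It follows that $PZ/Z\in\syl_2(S)$, that the full preimage in $G$ of $N_S(PZ/Z)$ is $N_G(P)$, and — since $Z\leq Z(G)\leq N_G(P)$ — that $N_S(PZ/Z)/(PZ/Z)\cong N_G(P)/PZ$. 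Hence
\[
\text{$S$ has a self-normalizing Sylow $2$-subgroup}\iff [N_G(P):P]=|Z|=(3,q-\epsilon).
\]
Because $(3,q-\epsilon)\in\{1,3\}$ is already odd we have $(3,q-\epsilon)_{2'}=(3,q-\epsilon)$, so everything comes down to computing $[N_G(P):P]$ for $G=E_6^{\epsilon}(q)_{sc}$ and comparing it with $(q-\epsilon)_{2'}$.

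The second step is that computation, carried out via the $2$-local structure theory of groups of Lie type in odd characteristic, in the spirit of Carter--Fong \cite{carterfong} but for exceptional type. One fixes a suitable $F$-stable maximal torus $\textbf{T}$ of $\textbf{G}$, whose $G$-conjugacy type depends on $q$ modulo $4$ and on $\epsilon$, chosen so that $|\textbf{T}^F|$ carries the full $2$-part of $|G|$ and a Sylow $2$-subgroup $P$ may be taken inside $N_G(\textbf{T})$ with $P\cap\textbf{T}^F=(\textbf{T}^F)_2$; one then argues — dealing with finitely many small $q$ separately — that $N_G(P)\leq N_G(\textbf{T})$, so that $N_G(P)/P$ is identified with the fixed points of a Sylow $2$-subgroup of the relative Weyl group $W_G(\textbf{T})\leq W(E_6)$ acting on the $2'$-part $(\textbf{T}^F)_{2'}$. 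Running this through for type $E_6$ — exploiting, among other things, that the centralizer of the longest element $w_0$ in $W(E_6)$ is the $F_4$-type reflection group, which governs part of the relevant $2$-local structure — is exactly the arithmetic carried out in \cite{kondratiev2005} and \cite{kondratievmazurov}, and the outcome is that $N_G(P)/P$ is an abelian $2'$-group of order $(q-\epsilon)_{2'}$. Combined with the reduction above, this gives $[N_G(P):P]=(3,q-\epsilon)$ precisely when $(q-\epsilon)_{2'}=(3,q-\epsilon)_{2'}$.

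The main obstacle is the second step. Unlike the classical groups there is no uniform wreath-product model, so the torus $\textbf{T}$ must be chosen case by case, and one must control the interaction between the $2$-part contributed by $\textbf{T}^F$ and that contributed by $W(E_6)$ (of order $2^7\cdot 3^4\cdot 5$), verifying that the fixed points of a Sylow $2$-subgroup on $(\textbf{T}^F)_{2'}$ form only the predicted rank-one piece — so that, in particular, the odd-order contributions from subsystem subgroups of small rank that arise for $q\equiv\pm3\pmod 8$ do not persist in $E_6$. Since the required statement is exactly \cite[Theorem 6]{kondratievmazurov}, we simply invoke it.
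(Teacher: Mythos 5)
Your proposal is correct and matches the paper, which likewise gives no independent proof but simply attributes the statement to \cite[Theorem 6]{kondratievmazurov}; your reduction from $S$ to $G=E_6^\epsilon(q)_{sc}$ is consistent with the facts the paper quotes right after the lemma ($N_G(P)=P\times C$ with $C\cong C_{(q-\epsilon)_{2'}}$). The additional sketch of the $2$-local computation is reasonable background but not needed, since the citation carries the statement.
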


In fact, from \cite[Lemma 1.3 and proof of Theorem 6]{kondratievmazurov} and \cite[Lemma 4.13]{navarro-tiep:2015:irreducible-representations-of-odd-degree}, we see that for $P$ a Sylow $2$-subgroup of $G$, we have \[N_{G/Z}(PZ/Z)=PZ/Z \times \overline{C}\quad\hbox{ and }\quad N_{G}(P)=P \times {C},\] where $C\cong C_{(q-\epsilon)_{2'}}$ and $\overline{C}\cong C_{(q-\epsilon)_{2'}/(3,q-\epsilon)}$ is a subgroup of $C_{G^\ast}(t)$, with $t$ the unique involution in the center of $PZ/Z$.  By \cite[Proofs of Lemmas 4.25, 4.26]{harris}, we see that in fact $\overline{C}$ is comprised of elements $h(\chi)$ such that $\chi(\alpha_i)=1$ for $2\leq i\leq 5$ and $\chi(\alpha_6)=\chi(\alpha_1)^{-1}$, where we write $\{\alpha_1,...,\alpha_6\}$ for the simple roots numbered as in \cite[13.3.3]{Carter1} and $h(\chi)$ is as in \cite[Section 7.1]{Carter1}.  This, together with the definition of the action of $\phi$ and $F_p$ on the Chevalley generators yields the following:

\begin{lemma}\label{lem:autactC}
The graph automorphism $\phi$ acts on $\overline{C}$ and $C$ by inversion.  Further, the field automorphism $F_p$ acts on $\overline{C}$ and $C$ by $z\mapsto z^p$.
\end{lemma}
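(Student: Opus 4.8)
The plan is to prove this by a direct computation, since (as the surrounding text indicates) everything follows from the explicit description of $\overline{C}$ just recorded together with the standard formulas for the action of $\phi$ and $F_p$ on the Chevalley generators $x_\alpha(t), n_\alpha(t), h_\alpha(t)$, e.g. as in \cite[Theorem 1.12.1]{gorensteinlyonssolomonIII} or \cite[Section 7]{Carter1}.

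First I would record the induced action on torus elements. The automorphism $\phi$ is attached to the order-two symmetry $\rho$ of the $E_6$ Dynkin diagram, which in the numbering of \cite[13.3.3]{Carter1} interchanges $\alpha_1\leftrightarrow\alpha_6$ and $\alpha_3\leftrightarrow\alpha_5$ and fixes $\alpha_2$ and $\alpha_4$. Since $\phi$ sends $x_{\alpha}(t)$ to a sign times $x_{\rho(\alpha)}(t)$, the signs cancel in the product expressing $h_\alpha(t)$, so $\phi(h_\alpha(t))=h_{\rho(\alpha)}(t)$ and hence, by multiplicativity of $\chi\mapsto h(\chi)$, one gets $h(\chi)^\phi=h(\chi\circ\rho)$ (note $\rho^2=1$, so the direction is immaterial). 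Likewise $F_p$ sends $x_\alpha(t)\mapsto x_\alpha(t^p)$ and $h_\alpha(t)\mapsto h_\alpha(t^p)$, so $h(\chi)^{F_p}=h(\chi^{(p)})$, where $\chi^{(p)}(\alpha):=\chi(\alpha)^p$. In the twisted case $\epsilon=-1$ one checks that the relevant $\phi$ still acts on $T=\mathbf{T}^F$ through $\rho$ (indeed, for $t\in\mathbf{T}^F$ with $F=\rho\circ\Phi_q$ one has $\Phi_q(t)=\rho(t)$), so the argument below is uniform in $\epsilon$.

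Next I would substitute into the description of $\overline{C}$. Write a typical element of $\overline{C}$ as $h(\chi)$ with $\chi(\alpha_i)=1$ for $2\le i\le 5$ and $\chi(\alpha_6)=\chi(\alpha_1)^{-1}$, so that $h(\chi)\mapsto z:=\chi(\alpha_1)$ is the isomorphism of $\overline{C}$ onto its cyclic image. Applying $\rho$ leaves the set $\{\alpha_2,\alpha_3,\alpha_4,\alpha_5\}$ invariant and interchanges $\alpha_1,\alpha_6$, so $\chi\circ\rho$ satisfies the identical constraints but with parameter $(\chi\circ\rho)(\alpha_1)=\chi(\alpha_6)=z^{-1}$; in other words $\chi\circ\rho=\chi^{-1}$ on the root lattice, whence $h(\chi)^\phi=h(\chi\circ\rho)=h(\chi^{-1})=h(\chi)^{-1}$. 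In particular this simultaneously shows that $\phi$ stabilizes $\overline{C}$ and acts on it by inversion. For $F_p$, the relations $1^p=1$ and $(z^{-1})^p=(z^p)^{-1}$ show that $\chi^{(p)}$ again lies in the prescribed family, now with parameter $z^p$, so $F_p$ acts on $\overline{C}$ by $z\mapsto z^p$.

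Finally, for $C\leqslant G$ in place of $\overline{C}\leqslant G/Z$ I would run the same computation using the analogous description of $C$ as a set of torus elements $h(\chi)$ of $G=\mathbf{G}^F$ from \cite[Proofs of Lemmas 4.25, 4.26]{harris}; equivalently, one can deduce the statement for $C$ from that for the cyclic quotient $\overline{C}=C/Z$ together with the facts that the order-two diagram automorphism of $E_6$ inverts $Z(\mathbf{G})\cong C_{(3,q-\epsilon)}$ and that $F_p$ raises it to the $p$-th power. I do not expect a genuine obstruction in this lemma; the only point needing any care is keeping the parametrizations of $C$ and $\overline{C}$ consistent across the central isogeny $\mathbf{G}_{sc}\to\mathbf{G}_{ad}$ (the constraints are imposed on the simple roots, which span only the root lattice, so passing to $G_{sc}$ introduces the $Z(\mathbf{G})$-ambiguity handled above), which is routine.
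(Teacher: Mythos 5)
Your argument is correct and is essentially the paper's: the paper offers no separate proof of this lemma, deriving it exactly as you do from the explicit description of $\overline{C}$ as the elements $h(\chi)$ with $\chi(\alpha_i)=1$ for $2\le i\le 5$ and $\chi(\alpha_6)=\chi(\alpha_1)^{-1}$, together with the action of $\phi$ and $F_p$ on the Chevalley generators. One small caution: your parenthetical ``equivalently'' route for $C$ (deducing inversion on $C$ from inversion on $\overline{C}\cong C/Z$ plus inversion on $Z$) does not by itself pin down the power map on the cyclic group $C$ when $9\mid(q-\epsilon)$, since those two conditions only determine the exponent modulo $\operatorname{lcm}(|C|/3,3)=|C|/3$; so the direct computation with the elements $h(\chi)$ in $G$, which you give as your primary route, is the one to rely on.
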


\begin{lemma}\label{lem:whenSN2S}
Write $p^a$ for $q$ or $q^2$ in the case $\epsilon =1$ or $\epsilon =-1$, respectively, and let $Q \leqslant \Aut(G)$ be a $2$-group. The quotient $GQ/Z$ has a self-normalizing Sylow $2$-subgroup if and only if at least one of the following is satisfied:
\begin{enumerate}
	\item $G/Z$ has a self-normalizing Sylow $2$-subgroup; 
	\item  $Q$ contains a graph automorphism in case $\epsilon = 1$ or an involutary field automorphism in case $\epsilon=-1$, either of which we may identify as the map $\phi$, up to inner and diagonal automorphisms;
	\item $\epsilon=1$, $Q$ contains a field or graph-field automorphism $\varphi$ of order $a/m$ for some $m$ dividing $a$ (which we identify with $\phi^{\delta'} F_{ p}^m$, up to inner and diagonal automorphisms for $\delta'\in\{0,1\}$), and $\bG{G}^\varphi/Z(\bG{G}^\varphi)$ has a self-normalizing Sylow $2$-subgroup.   That is, by \prettyref{lem:whenSN2Ssimple}, one of the following holds, where we write $\delta=(-1)^{\delta'}$:
	\begin{itemize}
	\item[(3a)] $a$ is a $2$-power and  $(p^m-\delta)_{2'}=1$, or
	\item[(3b)]  $(p^m-\delta)_{2'}=\gcd(3,p^m-\delta)_{2'}$;
	\end{itemize}
		\item  $\epsilon=1$, $Q$ contains a field automorphism $\varphi_1$ of order $a/m_1$ and a graph-field automorphism $\varphi_2$ of order $a/m_2$ for some $m_1,m_2$ dividing $a$ (which we identify with $F_p^{m_1}$ and $\phi F_{ p}^{m_2}$, up to inner and diagonal automorphisms), and $\gcd(p^{m_1}-1, p^{m_2}+1)_{2'}=1$.
	\end{enumerate}
\end{lemma}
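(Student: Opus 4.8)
The plan is to reduce the statement, by a Frattini-type argument, to the vanishing of a centralizer inside the cyclic group $\overline C$, and then to run a short case analysis on the image of $Q$ in $\aut(S)/\mathrm{InnDiag}(S)$, where $S=G/Z$.

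\emph{Reduction to $C_{\overline C}(Q)$.} Since $Q$ is a $2$-group and $|\syl_2(S)|$ is odd, we may choose $\overline P:=PZ/Z\in\syl_2(S)$ to be $Q$-invariant. Recall from the discussion after \prettyref{lem:whenSN2Ssimple} that $N_S(\overline P)=\overline P\times\overline C$ with $\overline C$ cyclic of odd order $n:=(q-\epsilon)_{2'}/(3,q-\epsilon)$; as $\overline C$ is the characteristic odd Hall subgroup of $N_S(\overline P)$ it is $Q$-invariant, and $\overline P$ centralizes it. A Sylow $2$-subgroup of $GQ/Z\cong S\rtimes Q$ is $R:=\overline P\rtimes Q$, and since $\overline C$ is an abelian normal subgroup of $N_{GQ/Z}(\overline P)=(\overline P\times\overline C)\rtimes Q$ with complement $R$, the standard description of the normalizer of such a complement gives $N_{GQ/Z}(R)=R\cdot C_{\overline C}(R)=R\cdot C_{\overline C}(Q)$. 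Hence $GQ/Z$ has a self-normalizing Sylow $2$-subgroup if and only if $C_{\overline C}(Q)=1$ (cf. \cite[Lemma 2.1(ii)]{NavarroTiepTurull2007}). Moreover $|\mathrm{InnDiag}(S):S|=(3,q-\epsilon)$ is odd, so $Q\cap\mathrm{InnDiag}(S)$ consists of inner automorphisms by $2$-elements, which lie in $\overline P$ after our conjugation and hence centralize $\overline C$; thus $C_{\overline C}(Q)=C_{\overline C}(\bar Q)$, where $\bar Q$ is the image of $Q$ in $\aut(S)/\mathrm{InnDiag}(S)$, a $2$-subgroup of $\langle\overline{F_p}\rangle\times\langle\overline\phi\rangle\cong C_a\times C_2$ when $\epsilon=1$ and of $\langle\overline{F_p}\rangle\cong C_a$ (with $\overline\phi=\overline{F_p}^{a/2}$) when $\epsilon=-1$.

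\emph{The action on $\overline C$ and the arithmetic input.} By \prettyref{lem:autactC}, $\overline{F_p}$ acts on $\overline C$ as $z\mapsto z^p$ and $\overline\phi$ by inversion, so $\varphi=\phi^{\delta'}F_p^m$ acts as $z\mapsto z^{\delta p^m}$ with $\delta=(-1)^{\delta'}$, and $C_{\overline C}(\varphi)$ is cyclic of order $\gcd(n,p^m-\delta)$. If such a field or graph-field automorphism $\varphi$ has $2$-power order (forced once $\varphi$ is a nontrivial element of the $2$-group $\bar Q$), then $a/m$ is a power of $2$; a lifting-the-exponent computation then gives $v_3(q-\epsilon)=v_3(p^m-\delta)$, and combined with $(p^m-\delta)_{2'}\mid(q-\epsilon)_{2'}$ this forces $\gcd(n,p^m-\delta)=(p^m-\delta)_{2'}/(3,p^m-\delta)$. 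The right-hand side is exactly $|N_{\bG{G}^\varphi}(R')/R'|$ for a Sylow $2$-subgroup $R'$ of $\bG{G}^\varphi=E_6^{\delta}(p^m)_{sc}$, so $C_{\overline C}(\varphi)=1$ precisely when $\bG{G}^\varphi/Z(\bG{G}^\varphi)$ has a self-normalizing Sylow $2$-subgroup; by \prettyref{lem:whenSN2Ssimple} this is the disjunction of (3a) and (3b). The clause ``$a$ is a $2$-power'' in (3a) is then automatic, since $(p^m-\delta)_{2'}=1$ forces $m\in\{1,2\}$ by Mihailescu's theorem, whence $a=m\cdot(a/m)$ is a power of $2$.

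\emph{Case analysis.} Assume first $GQ/Z$ has a self-normalizing Sylow $2$-subgroup, i.e. $C_{\overline C}(\bar Q)=1$; we may assume $\overline C\neq1$, else (1) holds by \prettyref{lem:whenSN2Ssimple}. If $\overline\phi\in\bar Q$ then (2) holds. Otherwise $\bar Q\cap\langle\overline\phi\rangle=1$, so $\bar Q$ embeds in $C_a$ and is cyclic; a nontrivial generator is, modulo inner and diagonal automorphisms, a field or graph-field automorphism $\varphi=\phi^{\delta'}F_p^m$ of $2$-power order $a/m$, and $C_{\overline C}(\varphi)=C_{\overline C}(\bar Q)=1$ forces $\bG{G}^\varphi/Z(\bG{G}^\varphi)$ to have a self-normalizing Sylow $2$-subgroup by the previous paragraph, so (3) holds; when $\epsilon=-1$ a nontrivial $2$-subgroup of the cyclic group $C_a$ always contains its involution $\overline\phi$, so only (1) and (2) can occur, consistently with (3) and (4) being stated for $\epsilon=1$. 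For the converse, each of (1)--(4) implies $C_{\overline C}(Q)=1$: this is trivial for (1); for (2) one has $C_{\overline C}(Q)\leqslant C_{\overline C}(\overline\phi)=1$ since inversion is fixed-point-free on the odd-order group $\overline C$; (3) is the content of the previous paragraph; and for (4), $C_{\overline C}(Q)\leqslant C_{\overline C}(\varphi_1)\cap C_{\overline C}(\varphi_2)$ is cyclic of order $\gcd(n,p^{m_1}-1,p^{m_2}+1)$, which equals $1$ because $n$ is odd and $\gcd(p^{m_1}-1,p^{m_2}+1)_{2'}=1$. The delicate point throughout is the second step: keeping track of the factor $(3,q-\epsilon)$ dividing $|\overline C|$ and matching $\gcd(n,p^m-\delta)$ with the Sylow $2$-normalizer in $E_6^\delta(p^m)$ rests entirely on the valuation identity $v_3(q-\epsilon)=v_3(p^m-\delta)$, hence on $a/m$ being a $2$-power, and one must also take care in the first step that the inner and diagonal part of $Q$ really is absorbed into $\overline P$, so that only $\bar Q$ acts nontrivially on $\overline C$.
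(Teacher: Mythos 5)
Your argument is correct and rests on the same structural inputs as the paper's proof---the description $N_S(\overline{P})=\overline{P}\times\overline{C}$ following \prettyref{lem:whenSN2Ssimple}, the action recorded in \prettyref{lem:autactC}, and the equivalence of self-normalization with $C_{\overline{C}}(Q)=1$---but you execute the middle step differently. The paper argues element-wise: for the ``if'' direction it takes a $\varphi$-fixed coset, writes $g=xz$ with $z\in C$, and either forces $z^{2^c}\in Z$ (cases (2), (3a), (4)) or views the image of $z$ inside $E_6^{\delta}(p^m)$ and uses that group's self-normalizing Sylow $2$-subgroup (case (3b)); for the ``only if'' direction it explicitly constructs a $Q$-fixed element of $C$ when all of (1)--(4) fail, using the failure of (3) and (4). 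You instead compute $|C_{\overline{C}}(\varphi)|=\gcd(n,p^m-\delta)$ and match it arithmetically with $(p^m-\delta)_{2'}/(3,p^m-\delta)$, and you get the ``only if'' direction from the observation that a $2$-subgroup of $\langle\overline{F_p}\rangle\times\langle\overline{\phi}\rangle$ not containing $\overline{\phi}$ projects injectively to $\langle\overline{F_p}\rangle$, hence is cyclic, so that (1), (2) or (3) always occurs and (4) enters only as a sufficient condition; this buys exact fixed-point orders at the cost of some valuation bookkeeping that the paper avoids by working inside $E_6^{\delta}(p^m)$ directly. Two small repairs: your blanket claim $v_3(q-\epsilon)=v_3(p^m-\delta)$ is false as stated (for instance $p=11$, $m=1$, $a=2$, $\delta=1$ gives $v_3(120)=1\neq 0=v_3(10)$); lifting the exponent gives it exactly when $3\mid p^m-\delta$, which is the only case your gcd identity needs, so restrict the claim accordingly. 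Also the appeal to Mihailescu for (3a) is superfluous, since $(p^m-\delta)_{2'}=1$ already implies (3b), so the clause ``$a$ is a $2$-power'' plays no role in the equivalence.
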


We remark that \prettyref{lem:whenSN2S} is quite similar to the corresponding statement for type $A^\epsilon_n(q)$ in \cite{SFTaylorTypeA}.

\begin{proof}[Proof of \prettyref{lem:whenSN2S}]
Armed with \prettyref{lem:autactC}, the proof is analogous to a simplified version of that of \cite[Lemma 8.5]{SFTaylorTypeA}, but we include it for completeness.  Let $P$ be a Sylow $2$-subgroup of $G$ stabilized by $Q$.   

First suppose that one of (1), (2), (3), or (4) holds.  Note that in case (1), the statement is certainly true, since then $N_G(P)=PZ$, so $C_{N_G(P)/PZ}(Q)=1$.  Hence we may assume that $G/Z$ does not have a self-normalizing Sylow $2$-subgroup and that $Q$ contains an outer automorphism.  Specifically, either $Q$ contains a graph automorphism (in case $\epsilon =1$) or involutary field automorphism (in case $\epsilon = -1$), which we identify with $\phi$ on $G$, up to conjugation in $\widetilde{G}$; or $\epsilon=1$ and $Q$ contains a field or graph-field automorphism, which we identify as $\phi^{\delta'}F_{p^m}$ on $G$, up to conjugation in $\widetilde{G}$, for some $m\geq 1$.  Write $\varphi$ for the corresponding graph, field, or graph-field automorphism, respectively.  We will show that $C_{N_G(P)/PZ}(\varphi)=1$ in cases (2) and (3) and that $C_{N_G(P)/PZ}(\varphi_1, \varphi_2)=1$ in case (4).  Write $\overline{N}:=N_G(P)/PZ$ and let $\overline{g}$ denote the image of an element $g\in N_G(P)$ in $\overline{N}$.  Suppose $g\in N_G(P)$ satisfies that $\overline{g}\in C_{\overline{N}}(\varphi)$.  That is, $\overline{g}$ is fixed by $\varphi$ (resp. $\varphi_1$ and $\varphi_2$).  

Now, we have $g=xz$ for some $x\in P$ and $z\in C$.  Then observing the action of $\varphi$ on the $2'$-part of $g$, we see $\varphi(z)=zy$ for some $y\in Z$ of odd order in case (2) or (3).  Hence in cases (2) or (3a), there is some integer $c\geq 1$ such that $z^{2^c}=y$.  Since $z$ has odd order, we therefore see that $g\in PZ$, so that $\overline{g}=1$, yielding that in cases (2) and (3a), $C_{\overline{N}}(\varphi)=1$.  Similarly, in case (4), for $i=1,2$, we have $\varphi_i(z)=zy_i$ for some $y_i\in Z$ of odd order, so since $\gcd(p^{m_1}-1, p^{m_2}+1)_{2'}=1$, we also see in case (4) that $g\in PZ$ and $C_{\overline{N}}(\varphi_1, \varphi_2)=1$.

 Now assume condition (3b) holds, so that $\epsilon=1$, $(p^m-\delta)_{2'}=\gcd(3, p^m-\delta)_{2'}$, and $E_6^\delta(p^m)$ and $E_6^\delta(p^m)_{ad}$ have self-normalizing Sylow $2$-subgroups.  In this case, $z^{p^m-\delta}$ is an element of $Z$ of odd order.  In particular, the image of $z$ in $G/Z$ may be viewed as a member of $E_6^\delta(p^m)$ which centralizes a Sylow $2$-subgroup $P_m$ of $E_6^\delta(p^m)$ contained in $PZ/Z$.  But since $E_6^\delta(p^m)$ has a self-normalizing Sylow $2$-subgroup, we see that the image of $z$ in $G/Z$ is trivial, so that $\overline{g}=1$, and $C_{\overline{N}}(\varphi)=1$ in case (3b) as well.

Now, assume that none of (1) to (4) hold.  Then $(q-\epsilon)_{2'}>(3, q-\epsilon)_{2'}$ by \prettyref{lem:whenSN2Ssimple} and $\phi$ is not contained in $Q$ up to conjugation in $\wt{G}$.  Further, since the diagonal automorphisms of $S$ are odd, it follows that $Q$ does not contain a diagonal automorphism.
 
 It suffices to exhibit a $z\in C$ such that $\varphi(z)=z$ for each field or graph-field automorphism $\varphi$ contained in $Q$.   If $Q$ does not contain a field or graph-field automorphism, then we may choose such a $z$ to be a generator of $\overline{C}$.  In particular, this is the case if $\epsilon=-1$, so we assume $\epsilon=1$.  Write $q=p^a$.  If $GQ/G$ is cyclic containing a field or graph-field automorphism, we may identify the generator of the subgroup of $Q$ consisting of field or graph-field automorphisms as $\phi^{\delta'}F_p^m$ for some $m|a$ and $\delta'\in\{0,1\}$.  Similarly, if $Q$ contains both a field automorphism and graph-field automorphism, we may assume the corresponding subgroups are generated by $\varphi_1:=F_p^{m_1}$ and $\varphi_2:=\phi F_p^{m_2}$ for some $m_1, m_2|a$.  Without loss, it suffices to assume that these generators generate the largest $2$-group of automorphisms possible without inducing conditions (3) and (4).  In the cyclic case, we may choose $z$ to be an element of $\overline{C}\cap E^\delta_6(p^m)$.  Otherwise, we may take $z$ to be a generator of the subgroup of $C$ of size $\gcd(p^{m_1}-1, p^{m_2}+1)_{2'}$, which can be viewed as a member of $\overline{C}$ since (3) does not hold. Hence $z$ is fixed by $Q$ and has the required form. 
 \end{proof}
\begin{theorem}\label{thm:typeE6}
The simple groups $E_6^\epsilon(q)$ are SN2S-Good for odd $q$.
\end{theorem}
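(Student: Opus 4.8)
The plan is to verify \prettyref{cond:conjFI} for $S=E_6^\epsilon(q)$ and \prettyref{cond:conjIF} for the universal cover $G=E_6^\epsilon(q)_{sc}$; since $q$ is odd there is no exceptional Schur multiplier, so by the results of \cite{SchaefferFrySN2S1} this establishes that $S$ is SN2S-Good (\prettyref{def:Goodness}). Everything will be organized around \prettyref{lem:whenSN2S}, according to which of its four conditions — describing exactly when $GQ/Z$ has a self-normalizing Sylow $2$-subgroup — holds.

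The common engine is a structural reduction for odd-degree characters. If $\chi\in\irr_{2'}(G)$, then $\chi$ lies in the principal series by \prettyref{thm:MS7.7}, so by \prettyref{lem:MS7.9} we may write $\chi=R_T^G(\lambda)_\gamma$ with $2\nmid[W:W(\lambda)]$ and $\gamma\in\irr_{2'}(W(\lambda))$, where $W$ is of type $E_6$ or (when $\epsilon=-1$) the relative Weyl group of type $F_4$. Since $\bG{G}$ is simply connected of type $E_6$, the complement $C(\lambda)$ embeds into $Z(\bG{G})=\mu_3$, hence is a $\{3\}$-group; as $|W(\lambda)|_2=|W|_2=2^7$, the reflection group $R(\lambda)$ must then contain a full Sylow $2$-subgroup of $W$. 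The only reflection subgroups of $W$ with this property are $W$ itself and, up to conjugacy, one of type $D_5$ (resp.\ $B_4$), and in each case the corresponding centralizer in $\bG{G}^\ast$ has trivial component group; so in fact $C(\lambda)=1$ and $W(\lambda)=R(\lambda)$ has no component of type $G_2,E_7$ or $E_8$. Consequently $\delta_{\lambda,\sigma}$ is a character of the trivial group $W(\lambda)/R(\lambda)$ and hence trivial; $r_\sigma\equiv1$ because the element $w_1\in C(\lambda)$ in \prettyref{lem:rw} is always $1$, of length $0$; and $\gamma^{(\sigma)}=\gamma$ by \prettyref{cor:newsigmaaction}. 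Substituting into \prettyref{thm:GaloisAct} gives $\chi^\sigma=R_T^G(\lambda^\sigma)_\gamma$, so that, writing $\lambda\leftrightarrow s\in T^\ast$ and $s=s_2s_{2'}$ for its $2$- and $2'$-parts, we have $\chi^\sigma=\chi$ if and only if $\mathcal{E}(G,s)$ is $\sigma$-stable — equivalently $s_{2'}$ is conjugate to $s_{2'}^{-1}$ in $C_{\bG{G}^\ast}(s_2)$ — by \prettyref{lem:lambdafixseriesfix} and \cite[Lemma 3.4]{SFTaylorTypeA}. (The Howlett--Lehrer label $\gamma$ is automatically preserved under any such conjugation, since $W(D_5)$ with $5$ odd, and $W(B_4)$, admit no nontrivial diagram twist on their characters.)

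For \prettyref{cond:conjIF}, assume $GQ/Z$ has a self-normalizing Sylow $2$-subgroup, let $\lambda_Z\in\irr(Z)$ be $\sigma$-fixed and $Q$-invariant, and let $\chi\in\irr_{2'}(G|\lambda_Z)$ be $Q$-invariant; by the reduction it is enough to show $\mathcal{E}(G,s_\chi)$ is $\sigma$-stable. This is clear when $s_\chi$ is a $2$-element (and $\chi$ is $\sigma$-fixed by \prettyref{prop:unipsfixed} when $s_\chi=1$). Otherwise the reduction pins $C_{\bG{G}^\ast}(s_{\chi,2})^\circ$ down to a Levi with a factor of type $D_5$ whose one-dimensional central torus $T_1$ contains $s_{\chi,2'}$, and the nontrivial element of $N_{W}(W(D_5))/W(D_5)$ inverts $T_1$; this yields $\sigma$-stability except when $s_{\chi,2}\in T_1$ and $C_{\bG{G}^\ast}(s_{\chi,2})=D_5T_1$, which would force $s_{\chi,2'}$ to be a central element of its own centralizer. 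Ruling out this last configuration under the self-normalizing hypothesis is the heart of the proof: one uses \prettyref{lem:autactC} to track how $\phi$ and $F_p$ act on $C$ (and dually on $T_1$), and checks in each of the four cases of \prettyref{lem:whenSN2S} that the congruences forcing $GQ/Z$ to be self-normalizing, together with the $Q$-invariance of $\chi$, are incompatible with $s_{\chi,2'}$ being a nontrivial central element of $D_5T_1$. I expect this case-by-case compatibility check to be the main obstacle.

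Finally, \prettyref{cond:conjFI} is handled by contraposition. If $C_{N_S(P)/P}(Q)\neq1$, then none of the four conditions of \prettyref{lem:whenSN2S} holds; in particular $(q-\epsilon)_{2'}>(3,q-\epsilon)_{2'}$, and the concluding argument in the proof of \prettyref{lem:whenSN2S} exhibits a nontrivial $Q$-fixed element of $C$ (or of $\overline C$). Using that a $2$-group acting on a cyclic group of odd order acts through $\langle\text{inversion}\rangle$ on each primary component, we may upgrade this to a $Q$-fixed element of order $n\notin\{1,3\}$, which dualizes to a $Q$-fixed semisimple $s$ of order $n$ lying in the central torus $T_1$ of a $D_5T_1$-Levi of $\bG{G}^\ast$; one then produces a $Q$-invariant $\chi\in\irr_{2'}(S)$ in (the image in $S$ of) $\mathcal{E}(G,s)$ — the delicate point, drawing on the explicit descriptions of $C$ and $\overline C$ from \cite{harris,kondratievmazurov}, is to keep $s$ over the trivial character of $Z$ while $s\not\sim_{\bG{G}^\ast}s^2$. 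Such a $\chi$ has odd degree, because $C_{\bG{G}^\ast}(s)=D_5T_1$ forces $2\nmid[W:W(s)]=27$; and $\chi^\sigma=\chi$ would force $s\sim_{\bG{G}^\ast}s^2$, which — since any element conjugating $s$ to $s^2$ normalizes $C_{\bG{G}^\ast}(s)=D_5T_1$ and hence acts on $T_1$ by $\pm1$ — requires $n\mid3$, a contradiction. Thus $\chi^\sigma\neq\chi$, as \prettyref{cond:conjFI} demands, and combining the two conditions with the cases already treated completes the proof of Theorem A.
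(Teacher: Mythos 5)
There is a genuine gap, and it sits in your treatment of \prettyref{cond:conjIF}. Your ``engine'' asserts that $\sigma$-stability of $\mathcal{E}(G,s)$ is equivalent to $s_{2'}$ being conjugate to $s_{2'}^{-1}$ in $C_{\bG{G}^\ast}(s_2)$. But $\sigma$ \emph{squares} odd-order roots of unity, so by \prettyref{lem:lambdafixseriesfix} and \cite[Lemma 3.4]{SFTaylorTypeA} the correct criterion is $s\sim s_2s_{2'}^2$, i.e.\ a conjugation realizing squaring, not inversion (you use the correct criterion yourself at the end of your \prettyref{cond:conjFI} argument). Consequently the step ``the nontrivial element of $N_W(W(D_5))/W(D_5)$ inverts $T_1$; this yields $\sigma$-stability except when\dots'' is false: inversion of $T_1$ gives at best $s\sim s_2s_{2'}^{-1}$, and for $s_{2'}$ of odd order $n>3$ it never yields $s\sim s_2s_{2'}^2$ --- indeed this is exactly why the character you construct for \prettyref{cond:conjFI} is moved by $\sigma$. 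With that step gone, what actually has to be proved is that $Q$-invariance of $\chi$ forces $\lambda$ (equivalently $s$) to have $2$-power order, and this is precisely the case-by-case analysis you defer (``I expect this\dots to be the main obstacle''), i.e.\ the real content of the condition. The paper's proof does it by using that $\langle\phi,F_p\rangle$ stabilizes the maximally split torus $T$, with $\phi$ acting by inversion and $F_p$ by $t\mapsto t^p$ (\prettyref{lem:autactC}), and running through the cases of \prettyref{lem:whenSN2S}: case (1) via the self-centralizing Sylow $2$-subgroup of $G^\ast$, cases (2), (3a), (4) via the stated congruences forcing $\lambda$ to have $2$-power order, and case (3b) by viewing $\lambda$ as a character of $\bG{T}^\varphi\leq\bG{G}^\varphi$, whose dual has a self-normalizing Sylow $2$-subgroup. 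None of this appears in your outline, so the ``self-normalizing $\Rightarrow$ all $Q$-invariant odd-degree characters are $\sigma$-fixed'' direction is not established.

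Two further points. Your passage from $\sigma$-stability of the series to $\chi^\sigma=\chi$ leans on the parenthetical claim that the Howlett--Lehrer label is automatically preserved under the conjugation carrying $\lambda^\sigma$ to $\lambda$; this equivariance is not free, and the paper avoids it entirely by extending $\chi$ to $\wt{G}$, invoking \cite[Proposition 13.30]{dignemichel}, and using that the degrees of members of $\mathcal{E}(\wt{G},\wt{s})$ lying over odd-degree characters of $G$ are pairwise distinct (via \cite[Lemma 4.13]{navarro-tiep:2015:irreducible-representations-of-odd-degree}). For \prettyref{cond:conjFI} your skeleton is in the paper's spirit (a $Q$-fixed $z$ of odd order $>3$ in $C$, a semisimple character of odd degree moved by $\sigma$ because $s\not\sim s^2$), but the point you flag as delicate --- keeping $s$ over the trivial character of $Z$ --- is left unresolved; the paper sidesteps it by producing the character in $\irr_{2'}(G^\ast)$ and using that odd-degree non-unipotent characters of $G^\ast$ restrict irreducibly to $S$.
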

\begin{proof}

We begin by showing \prettyref{cond:conjFI} holds for the simple groups $E_6^\epsilon(q)$ for $q$ odd.  Let $A=SQ$ be an almost simple group obtained by adjoining a $2$-group $Q$ of automorphisms to $S=E_6^\epsilon(q)$.  Suppose that $A$ does not contain a self-normalizing Sylow $2$-subgroup, so that $Q$ is not as in (1)-(4) of \prettyref{lem:whenSN2S}.  

By \cite[Lemma 4.13]{navarro-tiep:2015:irreducible-representations-of-odd-degree}, we know that every odd-degree character of $G^\ast$ not lying over a unipotent character restricts irreducibly to $S$, and that every non-unipotent odd-degree character of $S$ can be obtained in this way.  Hence it suffices to exhibit a $Q$-invariant $\wt{\chi}\in\irr_{2'}(G^\ast)$ which is not fixed by $\sigma$.      Letting $s$ be the pre-image in $C$ of the element $z$ of $\overline{C}$ obtained in the last paragraph of the proof of \prettyref{lem:whenSN2S}, we see that the semisimple character $\chi_s$ in the rational Lusztig series $\mathcal{E}(G^\ast, s)$ has odd degree, is $Q$-invariant, and is not fixed by $\sigma$ (see \cite[Corollary 2.4]{NavarroTiepTurullCyclo} and \cite[Lemma 3.4]{SFTaylorTypeA}).

Next we prove that \prettyref{cond:conjIF} holds for the simple groups $E_6^\epsilon(q)$ for $q$ odd.  Let $Q$ be as in \prettyref{lem:whenSN2S}, so that $GQ/Z$ has a self-normalizing Sylow $2$-subgroup.   Let $\chi\in\irr_{2'}(G)$ be non-unipotent and fixed by $Q$, and suppose that the rational Lusztig series $\mathcal{E}(G,s)$ contains $\chi$.  By \cite[Lemma 4.13]{navarro-tiep:2015:irreducible-representations-of-odd-degree}, $\chi$ extends to an irreducible character of $\wt{G}$.  We claim that it suffices to show that the series $\mathcal{E}(G,s)$ is stabilized by $\sigma$.  Indeed, in this case, note that $\wt{\chi}^\sigma\in\irr(\wt{G}|\chi^\sigma)$ if  $\wt{\chi}\in\irr(\wt{G}|\chi)$.  Note that $\wt{\chi}\in\mathcal{E}(\wt{G},\wt{s})$ for some pre-image $\wt{s}\in\wt{G}^\ast$ of $s$ by \cite[Corollaire 9.5]{bonnafe06}. Similarly, $\wt{\chi}^\sigma\in\mathcal{E}(\wt{G},\wt{s}t)$ for some $t\in Z(\wt{G}^\ast)$.  So by \cite[Proposition 13.30]{dignemichel}, $\wt{\chi}^\sigma=\wt{\chi}' \otimes\wh{t}$ for some linear character $\wh{t}$ of $\wt{G}^\ast$ and $\wt{\chi}'\in\mathcal{E}(\wt{G},\wt{s})$.  But the proof of \cite[Lemma 4.13]{navarro-tiep:2015:irreducible-representations-of-odd-degree} shows that the character degrees amongst members of $\mathcal{E}(\wt{G},\wt{s})$ lying above odd-degree characters of $G$ are distinct, so that $\wt{\chi}=\wt{\chi}'$ and $\chi^\sigma=\wt{\chi}^\sigma|_G=\wt{\chi}|_G=\chi$, proving the claim.

In particular, note that $\mathcal{E}(G,s)$ is stabilized by $\sigma$ in the case that $s$ has $2$-power order, by \cite[Lemma 3.4]{SFTaylorTypeA}.  If case (1) of \prettyref{lem:whenSN2S} holds, then we are done since $C_{G^\ast}(s)$ must contain a Sylow $2$-subgroup of $G^\ast$, which is self-normalizing and hence self-centralizing.  In the remaining cases, we may write $R_T^G(\lambda)$ for the Harish-Chandra series containing $\chi$, where $T=\bG{T}^F$ is a maximally split torus, by \cite[Lemma 7.9]{MalleSpathMcKay2}.   By \prettyref{lem:lambdafixseriesfix}, it suffices to show that $\lambda^\sigma=\lambda$.  However, since $\chi$ is fixed by $Q$, it is clear that $R_T^G(\lambda)$, and hence $\lambda$, is fixed by $Q$, since $\langle \phi, F_p\rangle$ stabilizes $T$.  But notice that $\phi$ acts on $T$ via inversion and $F_p$ acts on $T$ via $t\mapsto t^{p}$, so that in cases (2), (3a), or (4), we see that $\lambda$ must have $2$-power order, so is fixed by $\sigma$.  In case (3b), we may view $\lambda$ as a character of the maximally split torus $\bG{T}^{\varphi}$ of $\bG{G}^\varphi$, so that $\lambda\in\mathcal{E}(\bG{T}^\varphi, s)$ and $s$ has $2$-power order since $s\in (\bG{T}^\varphi)^\ast\leq (\bG{G}^\varphi)^\ast$, which has a self-normalizing Sylow $2$-subgroup.  
\end{proof}

\section{Acknowledgements}

The author would first and foremost like to express her gratitude to Britta Sp{\"a}th, who suggested the main strategy of analyzing the action on Harish-Chandra series, provided a number of extremely useful conversations, and without whose help the original manuscript likely would not have existed.  She is also extremely grateful to the anonymous referee, whose careful reading lead to detailed comments and suggestions that vastly improved and clarified the exposition, particularly in Section 2. 

The author was supported in part by a grant through MSU Denver's Faculty Scholars Program, a grant from the Simons Foundation (Award \#351233), and an NSF-AWM Mentoring Travel Grant. She would like to thank G.\ Malle and B. Sp{\"a}th for their hospitality and helpful discussions during the research visit to TU Kaiserslautern supported by the latter grant, during which the majority of the work for this article was accomplished.  She would also like to thank G. Malle, B. Sp{\"a}th, and J. Taylor for their extremely helpful comments on an earlier version of the manuscript. 

Part of this was completed while the author was in residence at the Mathematical
Sciences Research Institute in Berkeley, California during the Spring 2018 semester program
on Group Representation Theory and Applications, supported by the National Science Foundation
under Grant No. DMS-1440140. She thanks the institute and the organizers of the program for
making her stay possible and providing a collaborative and productive work environment.

In addition, the author would like to thank P. H. Tiep and G. Navarro for suggesting this problem and their kindness and many helpful conversations throughout the three articles detailing her work on \prettyref{conj:mainprob}.

\bibliographystyle{alpha}
\bibliography{researchreferences}

\end{document}